\documentclass{amsart}

\usepackage{amsmath}
\usepackage{amssymb}
\usepackage{caption}

\usepackage{xcolor}
\usepackage{graphicx}
\usepackage{nicefrac}
\usepackage{enumitem}
\setlist[enumerate]{label={\upshape(\roman*)}}
\usepackage[colorlinks=true, linkcolor=black, citecolor=black]{hyperref}
\usepackage{cleveref}



\newcommand{\norm}[1]{\ensuremath{\left\lVert #1 \right\rVert }}
\newcommand{\abs}[1]{\ensuremath{\left\lvert #1 \right\rvert}}
\newcommand{\angles}[1]{\ensuremath{\left\langle #1 \right\rangle}}


\newcommand{\snorm}[1]{\ensuremath{\lVert #1 \rVert }}


\expandafter\mathchardef\expandafter\varphi\number\expandafter\phi\expandafter\relax
\expandafter\mathchardef\expandafter\phi\number\varphi

\newtheorem{theorem}{Theorem}[section]
\newtheorem{corollary}[theorem]{Corollary}
\newtheorem{lemma}[theorem]{Lemma}
\newtheorem{proposition}[theorem]{Proposition}

\theoremstyle{definition}

\newtheorem{definition}[theorem]{Definition}
\newtheorem{example}[theorem]{Example}

\newtheorem*{openproblem}{Open Problem}

\numberwithin{equation}{section}

\DeclareMathOperator{\dist}{dist}

\DeclareMathOperator{\diag}{diag}
\DeclareMathOperator{\trace}{tr}
\DeclareMathOperator{\spans}{span}
\DeclareMathOperator{\conv}{conv}

\DeclareMathOperator{\ind}{ind}
\DeclareMathOperator{\Int}{Int}
\newcommand{\spec}{\ensuremath{\sigma}}
\newcommand{\essspec}{\ensuremath{\sigma_{\text{ess}}}}
\newcommand{\nr}{\ensuremath{W}}
\newcommand{\essnr}{\ensuremath{W_{\text{e}}}}
\newcommand{\Hil}{\ensuremath{\mathcal{H}}}

\newcommand{\D}{\ensuremath{\mathcal{D}}}
\newcommand{\uorbit}{\ensuremath{\mathcal{U}}}
\newcommand{\closure}[1]{\ensuremath{\overline{#1}}}
\renewcommand{\epsilon}{\varepsilon}


\DeclareFontFamily{U}{mathb}{\hyphenchar\font45}
\DeclareFontShape{U}{mathb}{m}{n}{
<-6> mathb5 <6-7> mathb6 <7-8> mathb7
<8-9> mathb8 <9-10> mathb9
<10-12> mathb10 <12-> mathb12
}{}
\DeclareSymbolFont{mathb}{U}{mathb}{m}{n}
\DeclareMathSymbol{\pprec}{\mathrel}{mathb}{"CE}
\DeclareMathSymbol{\ssucc}{\mathrel}{mathb}{"CF}


\begin{document}

\title[On diagonals of operators: selfadjoint, normal and other classes]{On diagonals of operators: \\ selfadjoint, normal and other classes}
\author{Jireh Loreaux}
\email{jloreau@siue.edu}
\address{Southern Illinois University Edwardsville \\
  Department of Mathematics and Statistics \\
  Edwardsville, IL, 62026-1653 \\
  USA}
\author{Gary Weiss$^{*}$}
\email{gary.weiss@uc.edu}
\address{University of Cincinnati   \\
  Department of Mathematics  \\
  Cincinnati, OH, 45221-0025 \\
  USA}
\thanks{$^{*}$The second named author is partially supported by Simons Foundation Collaboration Grant \#245014.}

\begin{abstract}
  We provide a survey of the current state of the study of diagonals of operators, especially selfadjoint operators.
  In addition, we provide a few new results made possible by recent work of M\"uller--Tomilov and Kaftal--Loreaux.
  This is an expansion of the second author's lecture part II at OT27.
\end{abstract}

\keywords{diagonal, Schur--Horn}
\subjclass[2010]{Primary 47A12, 47B15; Secondary 47A10, 47B07}

\maketitle

\section{Introduction}
\label{sec:introduction}

By a diagonal of an operator $T \in B(\Hil)$ we mean a sequence $(\angles{Te_n,e_n})_{n=1}^{\infty}$ where $\{e_n\}_{n=1}^{\infty}$ is an orthonormal basis of $\Hil$.
The orthonormal basis is not fixed, and so $T$ has many diagonals.
Throughout this paper, we will use $\D(T)$ to denote the set of all diagonals of $T$.
Instead of varying the orthonormal basis, a useful equivalent viewpoint is to fix the orthonormal basis and consider the diagonals of the operators $UTU^{-1} = UTU^{*}$ in the unitary orbit $\uorbit(T)$ relative to this fixed orthonormal basis.
If $E$ denotes the canonical trace-preserving conditional expectation onto the subalgebra of diagonal operators determined by this fixed basis (i.e., $E$ denotes the operation of ``taking the main diagonal''), then there is a natural identification between $\D(T)$ and $E(\uorbit(T))$ via the *-isomorphism $\diag : \ell^{\infty} \to E(B(\Hil))$.
As such, sometimes we regard elements of $E(\uorbit(T)) \subseteq B(\Hil)$ as diagonals of $T$ even though they are operators as opposed to sequences.

The collection $\D(T)$ contains a substantial amount of information about the operator $T$.
For example, since every unit vector (in fact, any $k$-tuple of orthonormal vectors) is contained in some orthonormal basis, $\D(T)$ encodes the numerical range $\nr(T)$ (and correspondingly all the $k$-numerical ranges, see \cite{Hal-1964-ASM} for the origin of this notion).
Therefore, since an operator is selfadjoint if and only if its numerical range is contained in $\mathbb{R}$, it is clear that $T$ is selfadjoint if and only if $\D(T)$ contains only real-valued sequences.
This illustrates an example of how information about $\D(T)$ can yield obvious information about $T$.
A less obvious illustration of information encoding: when $\Hil$ is finite dimensional, $T$ is normal if and only if all the $k$-numerical ranges of $T$ are polygons (see \cite{Li-1994-LMA}), so from this, normality of $T$ can be determined from $\D(T)$.
We believe it is an open question whether normality of $T$ can be determined from $\D(T)$ when $\Hil$ is infinite dimensional; normality certainly cannot be determined solely from the $k$-numerical ranges of $T$.
Indeed, the latter is because, as is not hard to prove,  both the unilateral shift (which is non-normal) and a diagonal operator whose eigenvalues are dense in the open unit disk have that disk as their $k$-numerical range for each $k$.
Another less immediate example: diagonals also encode the essential numerical range by means of the fact that $\lambda \in \essnr(T)$ if and only if there is some diagonal of $T$ which contains a subsequence converging to $\lambda$.

Over the past century, diagonals of operators, especially of selfadjoint operators, have been investigated a great deal with substantial success.
A foundational result concerning diagonals of selfadjoint operators is due to Schur \cite{Sch-1923-SBMG} and Horn \cite[Theorems~1~and~5]{Hor-1954-AJM}, and is therefore called the \emph{Schur--Horn theorem}.
Of central importance in their theorem is the notion of \emph{majorization} of real-valued sequences.

\begin{definition}
  \label{def:majorization-finite}
  Given finite sequences $d,\lambda \in \mathbb{R}^n$, to say that $d$ is \emph{majorized by} $\lambda$ (or that $\lambda$ \emph{majorizes} $d$), denoted $d \prec \lambda$, means 
  \begin{equation}
    \label{eq:majorization-finite}
    \sum_{i=1}^m d^{*}_i \le \sum_{i=1}^m \lambda^{*}_i \quad\text{for}\ 1 \le m \le n,\ \text{and}\quad \sum_{i=1}^n d_i = \sum_{i=1}^n \lambda_i,
  \end{equation}
  where the sequences $d^{*}, \lambda^{*}$ denote the nonincreasing rearrangements of the sequences $d, \lambda$, respectively.
\end{definition}

\begin{theorem}[\cite{Sch-1923-SBMG,Hor-1954-AJM}]
  \label{thm:schur-horn}
  For a selfadjoint operator $T$ on a finite dimensional Hilbert space $\Hil$ of dimension $n$, with eigenvalue sequence $\lambda$, repeated according to multiplicity, and a sequence $d \in \mathbb{R}^n$, the following are equivalent:
  \begin{enumerate}
  \item \label{item:schur-horn-diagonal} $d$ is a diagonal of $T$ ($d \in \D(T)$);
  \item $d$ is majorized by $\lambda$ ($d \prec \lambda$);
  \item \label{item:schur-horn-convexity} $d$ is a convex combination of permutations of $\lambda$ ($d \in \conv \{ \lambda_{\pi} \in \mathbb{R}^n \mid \pi\ \text{is a permutation} \}$).
  \end{enumerate}
\end{theorem}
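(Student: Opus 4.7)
The plan is to prove the cycle of implications (i) $\Rightarrow$ (ii) $\Rightarrow$ (iii) $\Rightarrow$ (i), which lets me use the cleanest argument at each step.

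For (i) $\Rightarrow$ (ii), which is Schur's direction: I would write $T = U \diag(\lambda) U^{*}$ and read off the diagonal entries to obtain $d_k = \sum_j \sabs{U_{kj}}^2 \lambda_j$. The coefficient matrix $S$ with entries $S_{kj} = \sabs{U_{kj}}^2$ is doubly stochastic because $U$ is unitary. Then for each $m$, I would verify the majorization inequality by observing that any partial sum $\sum_{k \in I} d_k$ with $\sabs{I} = m$ equals $\sum_j c_j \lambda_j$ where $c_j \in [0,1]$ and $\sum_j c_j = m$, which is bounded above by $\sum_{j=1}^m \lambda^{*}_j$ (a direct rearrangement argument, essentially Ky Fan's maximum principle). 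Maximizing over $I$ gives the partial majorization inequalities; the trace identity at $m = n$ is immediate since unitary conjugation preserves trace.

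For (ii) $\Rightarrow$ (iii), I would invoke the Hardy--Littlewood--P\'olya--Rado theorem together with Birkhoff--von~Neumann: $d \prec \lambda$ yields a doubly stochastic matrix $S$ with $d = S\lambda$, and Birkhoff--von~Neumann writes $S = \sum_\ell t_\ell P_{\pi_\ell}$ as a convex combination of permutation matrices, so $d = \sum_\ell t_\ell \lambda_{\pi_\ell}$. One can of course prove (ii) $\Rightarrow$ (iii) more directly using Muirhead's ``T-transform'' reduction, at the cost of an induction on the number of strict inequalities in (\ref{eq:majorization-finite}).

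For (iii) $\Rightarrow$ (i), which is Horn's direction: each permutation $\lambda_\pi$ clearly belongs to $\D(T)$ via conjugation by a permutation matrix in an eigenbasis of $T$, so it suffices to prove that $\D(T)$ is convex. I would do this directly, by induction on $n$, using a single $2 \times 2$ Givens rotation to fix one entry at a time. Concretely, assume $d \prec \lambda$ with both sequences in nonincreasing order and $d \ne \lambda$; choose the largest $k$ with $d_k \ne \lambda_k$ and the smallest index $j > k$ with $d_j > \lambda_j$ (such a $j$ exists because the partial sums must eventually agree at $n$). A single rotation in the plane spanned by the eigenvectors for $\lambda_k$ and $\lambda_j$ replaces this pair with $(d_k,\, \lambda_k + \lambda_j - d_k)$, reducing the problem to realizing the remaining $(n-1)$-tuple as a diagonal of a selfadjoint operator whose eigenvalues are the other $\lambda_i$ together with $\lambda_k + \lambda_j - d_k$; one checks that the required majorization is inherited, so induction applies.

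The main obstacle is the last step: while (i) $\Rightarrow$ (ii) and (ii) $\Rightarrow$ (iii) are essentially bookkeeping once the right classical tools are quoted, in (iii) $\Rightarrow$ (i) the convex-combination hypothesis is not constructive at the Hilbert space level and one must manufacture an explicit unitary. The inductive Givens-rotation construction is standard but requires verifying that after one rotation the reduced eigenvalue list (with one entry replaced by $\lambda_k + \lambda_j - d_k$) still majorizes the reduced diagonal list, which is where the choice of $k$ and $j$ above is essential.
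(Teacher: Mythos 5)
The paper states this theorem as background and cites Schur and Horn without supplying a proof, so there is no internal argument to compare against; I will evaluate your proposal on its own terms. The overall strategy is sound and is the standard one: Schur's doubly-stochastic/rearrangement argument for (i) $\Rightarrow$ (ii), Hardy--Littlewood--P\'olya together with Birkhoff--von~Neumann for (ii) $\Rightarrow$ (iii), and Horn's inductive Givens-rotation construction for the remaining implication.

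There is, however, a concrete error in the index selection that drives the induction. You pick ``the largest $k$ with $d_k \ne \lambda_k$ and the smallest index $j > k$ with $d_j > \lambda_j$.'' If $k$ is the \emph{largest} index at which $d$ and $\lambda$ disagree, then $d_j = \lambda_j$ for every $j > k$, so the required $j$ does not exist and the parenthetical appeal to the total-sum equality is moot. You want the \emph{smallest} $k$ with $d_k \ne \lambda_k$: then the partial-sum inequality at level $k$, combined with $d_i = \lambda_i$ for all $i < k$, forces $d_k < \lambda_k$; the total-sum equality then produces a smallest $j > k$ with $d_j > \lambda_j$; and monotonicity gives $d_k \ge d_j > \lambda_j$ together with $d_k < \lambda_k$, placing $d_k$ strictly inside $(\lambda_j,\lambda_k)$ --- which is precisely what makes the rotation in the $(e_k,e_j)$ eigenplane realizable with companion diagonal entry $\lambda_k + \lambda_j - d_k$. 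With that correction the remaining (and unwritten) verification that the reduced $(n-1)$-term eigenvalue list majorizes $(d_i)_{i\ne k}$ is a standard but genuinely necessary check, and the induction closes. A smaller presentational point: for (iii) $\Rightarrow$ (i) you announce that it suffices to show $\D(T)$ is convex, but what you then supply is a direct proof of (ii) $\Rightarrow$ (i); that is logically fine since (iii) $\Rightarrow$ (ii) is immediate (a convex combination of permutations of $\lambda$ is majorized by $\lambda$), but the write-up should say that explicitly rather than promise a convexity argument it does not carry out.
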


The Schur--Horn theorem is fascinating for several reasons.
Firstly, it was the first major result on diagonals of operators.
In addition, it provides a \emph{complete} characterization of the diagonals of any fixed selfadjoint operator $T$ on a finite dimensional Hilbert space.
Moreover, it shows that the diagonals $\D(T)$ of such an operator form a convex set which, as can be easily shown, has the permutations of the eigenvalue sequence as its extreme points.
This last fact is particularly surprising in that the authors are unaware of any direct proof; indeed, it is false in general if $T$ is not assumed to be selfadjoint (even for certain normal matrices, see \Cref{ex:nonconvex-normal-diagonal}).
Moreover, the unitary orbit of a selfadjoint operator (on a finite dimensional space) is completely determined by $\D(T)$ via any extreme point.
It is the Schur--Horn theorem that sparked a great deal of interest and focus on diagonals of \emph{selfadjoint} operators in particular which are the main subject of this survey.

The purpose of this paper is to provide a brief survey of the current state of knowledge, describe the history (apologies to the many significant unmentioned), add a few new results to the tapestry, and highlight open questions.
The information is arranged categorically rather than chronologically.
However, we try to provide some indication of the order in which results were discovered when they occur anachronistically in the order of appearance in the paper.

The remainder of this paper is organized as follows.
In \Cref{sec:compact-selfadjoint} we discuss results for diagonals of compact selfadjoint operators on an infinite dimensional Hilbert space.
These can be thought of in some way as the most direct generalizations of the Schur--Horn theorem.
Results from this section focus on work found in \cite{AK-2006-OTOAaA,GM-1964-MSN,KW-2010-JFA,LW-2015-JFA,Mar-1964-UMN}.
In \Cref{sec:finite-spectrum-selfadjoint} we review the study of diagonals of finite spectrum selfadjoint operators.
This was initiated by Kadison \cite{Kad-2002-PNASU,Kad-2002-PNASUa} and a complete characterization was provided by Bownik and Jasper \cite{BJ-2015-TAMS,BJ-2015-BPASM,Jas-2013-JFA}.
In \Cref{sec:general-selfadjoint} we discuss several results which hold for broad classes of selfadjoint operators coming from \cite{MT-2019-TAMS,Neu-1999-JFA}.
It is in this context that we are able to establish a new result which completely classifies diagonals of certain selfadjoint operators with at least three points in the essential spectrum (see \Cref{thm:diagonal-characterization}).
In \Cref{sec:normal-operators} we review the comparatively small amount of work that has been done for diagonals of normal operators due to \cite{Arv-2007-PNASU,Hor-1954-AJM,JLW-2016-IUMJ,Lor-2019-JOT,Wil-1971-JLMS2}.
Finally, in \Cref{sec:miscellaneous} we conclude with an overview of the few results which hold for more general classes of operators from \cite{Fan-1984-TAMS,FFH-1987-PAMS,Neu-1999-JFA,JLW-2016-IUMJ,Tho-1977-SJAM,Sin-1976-CMB,MT-2019-TAMS,Her-1991-RMJM}.

\section{Compact selfadjoint operators}
\label{sec:compact-selfadjoint}

We begin by defining some notation which occurs repeatedly throughout this section.
Let $c_0$ denote the collection of infinite sequences converging to zero, let $c_0^+$ its subset of nonnegative sequences, and let $c_0^{*}$ denote the subset of $c_0^+$ of nonincreasing sequences.
For a sequence $d \in c_0^+$ we let $d^{*} \in c_0^{*}$ denote the nonincreasing rearrangement\footnote{This is not technically a rearrangement in the sense that $d^{*}$ is not always a permutation of $d$, i.e., when $d$ has infinite support but is not strictly positive. However, this is in keeping with the standard terminology in the field and from measure theory.} of $d$.

Extending the Schur--Horn theorem to the setting of compact operators first requires a suitable notion of majorization.
It turns out that exactly which notion is appropriate depends on the context, but they agree on their common domain of definition (i.e., \Cref{def:weak-majorization,def:real-majorization} coincide for nonnegative sequences in $\ell^1$).

\begin{definition}[\protect{\cite[pp. 202--203]{GM-1964-MSN}}]
  \label{def:weak-majorization}
  Let $d,\lambda \in c_0^+$. 
  One says that $d$ is \emph{weakly majorized} by $\lambda$, denoted $d \pprec \lambda$, if for all $n \in \mathbb{N}$,
  \begin{equation*}
    \sum_{j=1}^n d^*_j \le \sum_{j=1}^n \lambda^*_j.
  \end{equation*}
  If in addition
  \begin{equation*}
    \sum_{j=1}^{\infty} d_j = \sum_{j=1}^{\infty} \lambda_j,
  \end{equation*}
  then $d$ is \emph{majorized} by $\lambda$, denoted $d \prec \lambda$; here we allow for the case when both sums are infinite.
\end{definition}

\begin{definition}[\protect{\cite[pp. 202--203]{GM-1964-MSN}}]
  \label{def:real-majorization}
  Let $d,\lambda \in \ell^1$ be real-valued. 
  One says that $d$ is \emph{majorized} by $\lambda$, denoted $d \prec \lambda$, if both $d_+ \pprec \lambda_+$ and $d_- \pprec \lambda_-$, and also $\sum_{j=1}^{\infty} d_j = \sum_{j=1}^{\infty} \lambda_j$.
\end{definition}

Gohberg and Markus \cite{GM-1964-MSN} were the first to extend the Schur--Horn theorem, and ultimately they characterized diagonals of selfadjoint trace-class operators modulo the number of zeros which occur in the diagonal sequence.

\begin{theorem}[\protect{\cite[Theorem 1]{GM-1964-MSN}}]
  \label{thm:gohberg-markus}
  Let $T$ be a selfadjoint trace-class operator in $B(\mathcal{H})$ with eigenvalue sequence $\lambda \in \ell^1$ repeated according to multiplicity. 
  Then
  \begin{enumerate}
  \item $d \in \D(T)$ implies $d \in \ell^1$ and $d \prec \lambda$, and conversely,
  \item $d \in \ell^1$ and $d \prec \lambda$ implies $d \oplus \mathbf{0} \in \D(T)$ for some dimension of $\mathbf{0}$.
  \end{enumerate}
\end{theorem}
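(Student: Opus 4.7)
For direction (i), the plan is to apply Ky Fan's maximum principle to coordinate projections. Given $d_i = \angles{Te_i, e_i}$ for an orthonormal basis $\{e_i\}$ of $\Hil$, and any finite $F \subset \mathbb{N}$ with $|F| = m$, the coordinate projection $P_F$ onto $\spans\{e_i \mid i \in F\}$ satisfies
\[
\sum_{i \in F} d_i \;=\; \trace(P_F T P_F) \;\le\; \sup_{\rank Q \le m} \trace(QTQ) \;=\; \sum_{j=1}^m (\lambda_+)^*_j,
\]
where the final equality is (the nonnegative version of) Ky Fan's principle. Taking the supremum of the left-hand side over $F$ with $|F| \le n$ gives $d_+ \pprec \lambda_+$, and applying the same argument to $-T$ yields $d_- \pprec \lambda_-$. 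The trace equality $\sum_i d_i = \trace(T) = \sum_i \lambda_i$ comes from basis-independence of the trace for trace-class operators, and then $d \in \ell^1$ follows from $\norm{d}_1 = \norm{d_+}_1 + \norm{d_-}_1 \le \norm{\lambda}_1$ via the two weak majorizations.

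For direction (ii), my plan is to build the desired orthonormal system inductively, placing one diagonal entry at a time via the finite-dimensional Schur--Horn theorem. Identify $T$ with $\diag(\lambda)$ on its eigenbasis $\{f_i\}$ and construct an orthonormal sequence $\{g_i\}$ with $\angles{Tg_i, g_i} = d_i$, completing to an ONB with extra vectors on which $T$ acts as zero (the source of $\oplus \mathbf{0}$ in the conclusion). At step $n$, maintain the inductive invariant that the tail $(d_n, d_{n+1}, \ldots)$ is majorized by the eigenvalue sequence $\mu^{(n)}$ of the compression of $T$ to $\spans\{g_1, \ldots, g_{n-1}\}^{\perp}$. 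Because $d_n$ lies in the convex hull of finitely many entries of $\mu^{(n)}$ (a consequence of the preserved majorization), the finite Schur--Horn theorem \ref{thm:schur-horn} provides a unitary rotation on that finite-dimensional block producing $g_n$ with $\angles{Tg_n, g_n} = d_n$ together with a refined orthogonal decomposition of the residual subspace.

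The hard part will be preserving the inductive invariant across steps: a naive choice of block or rotation can destroy the majorization of the refined residual sequence against the remaining tail of $d$. The remedy is a careful interleaving strategy that pairs $d_n$ with extremal (largest or smallest) residual eigenvalues, so that the passage from $\mu^{(n)}$ to $\mu^{(n+1)}$ preserves the tail majorization; the equality-of-sums clause in $\prec$ and the $\ell^1$ decay ensure the procedure does not accumulate uncontrolled errors. The $\oplus \mathbf{0}$ in the conclusion then accounts for the (possibly infinite) set of basis vectors that remain untouched by the construction, in particular when $d$ has only finitely many nonzero entries or when $T$ has a nontrivial kernel not absorbed into the iteration.
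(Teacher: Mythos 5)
This theorem is quoted from Gohberg--Markus and is not reproved in the survey, so there is no in-paper argument to compare against; evaluating your proposal on its own merits: your direction (i) is correct and standard. Ky Fan's maximum principle in the form $\sup_{\rank Q \le m} \trace(QTQ) = \sum_{j=1}^m (\lambda_+)^*_j$ (valid because $\trace(QTQ) \le \trace(QT_+Q)$ and Fan applies to the positive part), used for $T$ and for $-T$, gives $d_+ \pprec \lambda_+$ and $d_- \pprec \lambda_-$; this forces $\snorm{d}_1 \le \snorm{\lambda}_1 < \infty$, and then basis-independence of the trace yields the equality of sums. That is the standard route to the necessity direction.

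Direction (ii) is where the real content lies, and your write-up has a genuine gap precisely where you flag ``the hard part.'' You correctly identify the inductive invariant --- that the tail of $d$ remains majorized by the residual eigenvalue sequence --- but you merely assert that ``a careful interleaving strategy that pairs $d_n$ with extremal residual eigenvalues'' preserves it. That assertion \emph{is} the theorem: you must show (a) at each step there is a choice of $\mu_i \ge d_n \ge \mu_j$ in the residual spectrum such that after the $2\times 2$ Schur--Horn rotation replaces $\{\mu_i,\mu_j\}$ by $\{d_n,\ \mu_i+\mu_j-d_n\}$, the new residual sequence still majorizes $(d_{n+1}, d_{n+2}, \ldots)$; and (b) these choices can be made coherently across infinitely many steps so the resulting orthonormal family is well defined. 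Neither is argued, and as you yourself note, naive choices fail, so the proof is incomplete until the interleaving lemma is stated and proved.

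There is also a misconception in the completion step. The vectors beyond $\{g_i\}$ do not need to lie in $\ker T$; what you need is that the compression of $T$ to $M = (\spans\{g_i\})^{\perp}$ admits a zero diagonal. Since $\trace(E_M T E_M) = \trace T - \sum_i d_i = \sum_i \lambda_i - \sum_i d_i = 0$, this compression is trace-class with zero trace, and a zero-diagonal orthonormal basis of $M$ exists (see \Cref{thm:fan-zero-diagonal}); appending it supplies the $\oplus\,\mathbf{0}$ in the conclusion. This step should be made explicit rather than attributed to the kernel of $T$, which may well be trivial.
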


The study of diagonals of selfadjoint operators then remained dormant for 35 years until A.~Neumann's generalization of the convexity portion of the Schur--Horn theorem in \cite{Neu-1999-JFA} (see \Cref{sec:general-selfadjoint} for details), although other results on diagonals of general operators did appear during this interim \cite{Fan-1984-TAMS,FF-1987-PRIASA,FFH-1987-PAMS,FF-1994-PAMS}.
However, it wasn't until the work of Kadison \cite{Kad-2002-PNASU,Kad-2002-PNASUa} in 2002, and of Arveson and Kadison \cite{AK-2006-OTOAaA} in 2006, that the study of diagonals was truly renewed.
This has sparked a flurry of activity that continues today;
\cite{Arv-2007-PNASU,Jas-2013-JFA,BJ-2014-CMB,BJ-2015-TAMS,BJ-2015-BPASM,Arg-2015-IEOT,Lor-2019-JOT} stemming from \cite{Kad-2002-PNASUa,Kad-2002-PNASU}, and \cite{KW-2010-JFA,LW-2015-JFA} from \cite{AK-2006-OTOAaA}.
The papers proceeding from \cite{Kad-2002-PNASU,Kad-2002-PNASUa} will be discussed in more detail in \Cref{sec:finite-spectrum-selfadjoint}.
For now, we will continue with a discussion of \cite{AK-2006-OTOAaA} and the ensuing papers.
Arveson and Kadison \cite{AK-2006-OTOAaA} restricted their attention to positive trace-class operators, albeit at the time of its writing, they appeared unaware of the trace-class work of Markus \cite{Mar-1964-UMN} and Gohberg--Markus \cite{GM-1964-MSN}.
The result Arveson and Kadison obtained (\Cref{thm:ak-schur-horn}) is closely related to \Cref{thm:gohberg-markus}, but they also stated related open problems in type II$_1$ factors which initiated study on this topic yet is outside the scope of this survey.
Forays into type II factors resulting from the impetus in \cite{Kad-2002-PNASUa,Kad-2002-PNASU,AK-2006-OTOAaA} can be found in \cite{AM-2007-IUMJ,AM-2008-JMAA,AM-2013-PJM,BR-2014-PAMS}

\begin{theorem}[\protect{\cite[Theorem 4.1]{AK-2006-OTOAaA}}]
  \label{thm:ak-schur-horn}
  Let $A \in \mathcal{L}^1_+$ be a positive trace-class operator.
  Then 
  \begin{equation*}
    E\Big(\overline{\mathcal{U}(A)}^{\snorm{\cdot}_1}\!\Big) = \{ \diag d \mid d \in \ell^1_+, d \prec s(A) \},
  \end{equation*}
  where $\norm{\cdot}_1$ denotes the trace norm, and $s(A)$ the singular value sequence.
\end{theorem}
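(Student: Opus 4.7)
The plan is to prove the two inclusions separately. The forward inclusion $(\subseteq)$ is a routine continuity argument invoking \Cref{thm:gohberg-markus}, while the reverse inclusion $(\supseteq)$ requires an explicit approximation of $\diag d$ by diagonals of unitary conjugates of $A$ via the finite-dimensional Schur--Horn theorem.

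For the forward inclusion, write $B = \lim_n U_n A U_n^{*}$ in trace norm. Since $E$ is a trace-norm contraction, $E(B) = \lim_n E(U_n A U_n^{*}) = \lim_n \diag d^{(n)}$ for some $d^{(n)} \in \D(A)$. By \Cref{thm:gohberg-markus}(1), each $d^{(n)} \in \ell^1_+$ with $d^{(n)} \prec s(A)$. Writing $E(B) = \diag d$, we have $d^{(n)} \to d$ in $\ell^1$; positivity passes to trace-norm limits, yielding $d \in \ell^1_+$. The nonincreasing rearrangement map is a nonexpansion on $\ell^1$, so $d^{(n)*} \to d^{*}$ in $\ell^1$ and hence $\sum_{j=1}^m d_j^{*} \le \sum_{j=1}^m s(A)_j$ in the limit. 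Combined with the trace identity $\sum_j d_j = \trace B = \trace A = \sum_j s(A)_j$, this gives $d \prec s(A)$.

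For the reverse inclusion, given $d \in \ell^1_+$ with $d \prec s(A)$, I construct unitaries $V_M$ with $E(V_M A V_M^{*}) \to \diag d$ in trace norm. By unitary invariance we may assume $A = \diag s(A)$ in the fixed basis, and by permuting basis indices we may further assume $d = d^{*}$ is nonincreasing. The case that $d$ has finite support reduces directly to \Cref{thm:schur-horn}, since $d \prec s(A)$ then forces $s(A)$ to have the same finite support (the partial sum at the support length is forced to equal the total trace). Otherwise $d_N > 0$ for every $N$. Fix $\epsilon > 0$, choose $N$ large enough that $\sum_{j>N}(s(A)_j + d_j) < \epsilon/4$, and set
\[
c_M := \frac{1}{M-N}\left(\sum_{j=1}^M s(A)_j - \sum_{j=1}^N d_j\right) = \frac{1}{M-N}\left(\sum_{j>N} d_j - \sum_{j>M} s(A)_j\right),
\]
which tends to $0$ as $M \to \infty$, so $c_M \le d_N$ eventually.

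The sequence $(d_1, \ldots, d_N, c_M, \ldots, c_M)$ (with $M-N$ copies of $c_M$) is then nonincreasing, and I claim it is majorized by $(s(A)_1, \ldots, s(A)_M)$ in the finite-dimensional sense. The only nontrivial partial-sum inequality is $(m-N) c_M \le \sum_{j=1}^m s(A)_j - \sum_{j=1}^N d_j$ for $N < m < M$, which algebraically reduces to $(m-N)\sum_{j=m+1}^M s(A)_j \le (M-m)\bigl(\sum_{j=1}^m s(A)_j - \sum_{j=1}^N d_j\bigr)$. Monotonicity of $s(A)$ yields $\sum_{j=m+1}^M s(A)_j \le (M-m) s(A)_{m+1}$ and $\sum_{j=1}^m s(A)_j - \sum_{j=1}^N d_j \ge \sum_{j=N+1}^m s(A)_j \ge (m-N) s(A)_{m+1}$, confirming the inequality. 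Applying \Cref{thm:schur-horn} gives a unitary $W_M$ on $\spans\{e_1,\ldots,e_M\}$ realizing the prescribed diagonal, and I extend to $V_M := W_M \oplus I$ on $\Hil$. The telescoping estimate $\snorm{E(V_M A V_M^{*}) - \diag d}_1 \le (M-N) c_M + \sum_{j>N} d_j + \sum_{j>M} s(A)_j < \epsilon$ then holds for $M$ sufficiently large. The main obstacle is the verification of the finite-dimensional majorization for the averaged tail; once this is in place, the trace-norm approximation is routine.
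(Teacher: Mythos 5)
The paper states this theorem as a citation to Arveson--Kadison and does not prove it, so there is no internal argument to compare against. Your forward inclusion is correct, and the heart of the reverse inclusion --- verifying that the finite sequence $(d_1,\ldots,d_N,c_M,\ldots,c_M)$ is majorized by $(s(A)_1,\ldots,s(A)_M)$ --- is correct and nicely self-contained. However, there are two genuine gaps. First, the reduction ``by permuting basis indices we may further assume $d=d^{*}$ is nonincreasing'' fails precisely when $d$ has infinite support together with at least one zero entry: in that case $d^{*}$ is strictly positive and is \emph{not} a permutation of $d$ (this is exactly the point of the paper's footnote after \Cref{def:weak-majorization}). One fix is to run your construction for $d^{*}$ and then conjugate by the partial isometry $e_j \mapsto e_{\sigma(j)}$ (where $\sigma$ enumerates the support of $d$) to reinsert the zeros; this stays inside $\overline{\mathcal{U}(A)}^{\snorm{\cdot}_1}$ because \Cref{prop:orbit-closure-equivalences} identifies that set with the singular-value orbit $S(A)$.

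Second, your construction shows only that $E(V_M A V_M^{*}) \to \diag d$ in trace norm, i.e.\ $\diag d \in \overline{E(\mathcal{U}(A))}^{\snorm{\cdot}_1}$, whereas the theorem asserts $\diag d \in E\bigl(\overline{\mathcal{U}(A)}^{\snorm{\cdot}_1}\bigr)$: you need a single $B$ in the trace-norm closure of the orbit whose diagonal is exactly $d$. The operators $V_M A V_M^{*}$ are reshuffled on ever larger corners and need not converge in trace norm, and $\overline{\mathcal{U}(A)}^{\snorm{\cdot}_1}$ is not trace-norm compact (distinct rank-one projections are at mutual $\snorm{\cdot}_1$-distance $2$), so no subsequence obviously converges. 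A standard repair: pass to a weak*-convergent subnet $V_{M_\alpha}AV_{M_\alpha}^{*} \to B$ in $\mathcal{L}^1 = \mathcal{K}^{*}$; since $E\restriction_{\mathcal{L}^1}$ is the adjoint of $E\restriction_{\mathcal{K}}$ it is weak*-continuous, so $E(B)=\diag d$ and hence $\trace B = \sum_j d_j = \trace A$; then positivity, weak* convergence, and convergence of traces force trace-norm convergence (a Scheff\'e-type argument), placing $B$ in $\overline{\mathcal{U}(A)}^{\snorm{\cdot}_1}$. With these two repairs your argument is a clean elementary route to the theorem, closer in spirit to Gohberg--Markus than to the original Arveson--Kadison proof.
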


The astute reader will have noticed that Arveson and Kadison considered the trace-norm closure of the unitary orbit instead of the unitary orbit itself.
The net effect of taking this closure is essentially to vary the size of the kernel of $A$, as Arveson and Kadison note in \cite[Proposition~3.1(iii)]{AK-2006-OTOAaA}.
It turns out that for a positive compact operator $A$, this effect can be achieved by a variety of constructions as we note in \Cref{prop:orbit-closure-equivalences}.

\begin{definition}[\protect{\cite[p.~3152]{KW-2010-JFA}}]
  \label{def:partial-isometry-and-singular-value-orbit}
  Let $A$ denote a positive compact operator with singular value sequence $s(A)$ and range projection $R_A$.
  The \emph{partial isometry orbit} $\mathcal{V}(A)$ is the collection
  \begin{equation*}
    \mathcal{V}(A) := \{ VAV^{*} \mid V^{*}V = R_A \}.
  \end{equation*}
  The \emph{singular value orbit} $S(A)$ is the collection of positive compact operators with the same singular values as $A$, namely,
  \begin{equation*}
    S(A) := \{ B \in \mathcal{K}_+ \mid s(B) = s(A) \}.
  \end{equation*}
\end{definition}

The next proposition appears in the first author's dissertation, and to our knowledge, is the only reference for this result.
When the operator is positive and compact, it unifies the seemingly disparate perspectives of the singular value orbit, partial isometry orbit, norm closure of the unitary orbit and, when the operator is trace-class, even the trace-norm closure of the unitary orbit.
This unification makes it possible to realize \Cref{thm:kwpartialisometryorbit} as a strict generalization of \Cref{thm:ak-schur-horn}.

\begin{proposition}[\protect{\cite[Proposition~2.1.12]{Lor-2016}}]
  \label{prop:orbit-closure-equivalences}
  If $A \in \mathcal{K}_+$ is a positive compact operator, then
  \begin{equation*}
    \mathcal{V}(A) = S(A) = \overline{\mathcal{U}(A)}^{\snorm{\cdot}}. 
  \end{equation*}
  If in addition $A$ is trace-class, then these are also equal to $\overline{\mathcal{U}(A)}^{\snorm{\cdot}_1}$. 
  Furthermore, if $A$ has finite rank, then all these sets coincide with the unitary orbit $\mathcal{U}(A)$.
\end{proposition}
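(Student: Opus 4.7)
The plan is to prove the triple equality by a circular chain of inclusions $\mathcal{V}(A) \subseteq S(A) \subseteq \overline{\mathcal{U}(A)}^{\snorm{\cdot}} \subseteq \mathcal{V}(A)$, and then to adapt the same machinery to obtain the trace-norm and finite-rank refinements. Throughout, I will fix spectral decompositions $A = \sum_n s_n\, e_n \otimes e_n$ and, for $B \in S(A)$, $B = \sum_n s_n\, f_n \otimes f_n$ using the common singular value sequence, where $\{e_n\}$ and $\{f_n\}$ are orthonormal bases of $R_A\mathcal{H}$ and $R_B\mathcal{H}$ respectively.

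For $\mathcal{V}(A) \subseteq S(A)$, observe that $V^{*}V = R_A$ forces $V$ to be isometric on $R_A\mathcal{H}$, so $\{Ve_n\}$ is orthonormal and $VAV^{*} = \sum_n s_n\, Ve_n \otimes Ve_n$ has singular value sequence $s(A)$. Conversely, the partial isometry defined by $e_n \mapsto f_n$ and zero on $(I-R_A)\mathcal{H}$ satisfies $V^{*}V = R_A$ and $VAV^{*} = B$, giving $S(A) \subseteq \mathcal{V}(A)$. The inclusion $\overline{\mathcal{U}(A)}^{\snorm{\cdot}} \subseteq S(A)$ is routine: positivity and the individual singular values are preserved under operator-norm limits, the latter via the $1$-Lipschitz bound $|s_n(B) - s_n(B')| \leq \snorm{B - B'}$.

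The substantive direction $S(A) \subseteq \overline{\mathcal{U}(A)}^{\snorm{\cdot}}$ is the main technical step. For each $k$, I extend the finite correspondence $e_n \mapsto f_n$ ($n \leq k$) to a unitary $U_k$ on $\mathcal{H}$, which is possible since both sides are orthonormal $k$-tuples in an infinite-dimensional space. Then $U_k A U_k^{*} - B$ reduces to the difference of two positive tails $\sum_{n > k} s_n\, U_k e_n \otimes U_k e_n$ and $\sum_{n > k} s_n\, f_n \otimes f_n$, each of operator norm $s_{k+1}$, so that $\snorm{U_k A U_k^{*} - B} \leq 2 s_{k+1} \to 0$ by compactness of $A$. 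For the trace-class refinement the same unitaries give $\snorm{U_k A U_k^{*} - B}_1 \leq 2\sum_{n > k} s_n \to 0$ by summability, and the reverse inclusion follows from $\snorm{\cdot} \leq \snorm{\cdot}_1$.

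For the finite-rank case, $\mathcal{U}(A) \subseteq \mathcal{V}(A)$ is trivial (take $V = UR_A$), while $\mathcal{V}(A) \subseteq \mathcal{U}(A)$ uses that when $R_A$ has finite rank so does $VV^{*}$; hence $(I-R_A)\mathcal{H}$ and $(I-VV^{*})\mathcal{H}$ are both infinite-dimensional and admit a unitary isomorphism that patches $V$ to a unitary on $\mathcal{H}$. The main obstacle is the operator-norm tail bound in the third paragraph; the finite-rank hypothesis cannot be dropped, as an injective $A$ of infinite rank together with some $B \in S(A)$ having infinite-dimensional kernel produces a partial isometry $V$ whose range is a proper subspace of $\mathcal{H}$ and hence admits no unitary extension.
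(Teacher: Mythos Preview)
The paper does not actually contain a proof of this proposition; it only cites the first author's dissertation and remarks that this is ``the only reference for this result.'' So there is nothing in the paper itself to compare your argument against.

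That said, your proof is correct and is essentially the natural one. The chain $\mathcal{V}(A) = S(A)$ via the explicit partial isometry $e_n \mapsto f_n$, the closedness of $S(A)$ via the Lipschitz bound on singular values, and the approximation $S(A) \subseteq \overline{\mathcal{U}(A)}^{\snorm{\cdot}}$ by extending the finite correspondences $e_n \mapsto f_n$ ($n \le k$) to unitaries and bounding the tail by $2s_{k+1}$ are exactly what one would expect, and the trace-class and finite-rank refinements follow immediately from the same construction as you indicate.

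One small writing issue: your final paragraph is a bit garbled. The phrase ``The main obstacle is the operator-norm tail bound in the third paragraph'' reads as though something went wrong there, when in fact that step was handled cleanly; presumably you meant that this was where the main work lies. The subsequent remark that the finite-rank hypothesis cannot be dropped for $\mathcal{V}(A) = \mathcal{U}(A)$ is a correct and useful observation, but it is logically separate from the tail bound and should be phrased as such.
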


The following fundamental result of Kaftal and Weiss \cite{KW-2010-JFA} characterizes the diagonals of positive compact operators in the partial isometry orbit, which, by \Cref{prop:orbit-closure-equivalences} is the same as the trace-norm closure of the unitary orbit when the operator is trace-class.
Therefore, their following \Cref{thm:kwpartialisometryorbit} is a generalization of \Cref{thm:ak-schur-horn} to compact operators.
Moreover, it has a striking resemblance to the majorization portion of the Schur--Horn theorem.

\begin{theorem}[\protect{\cite[Proposition 6.4]{KW-2010-JFA}}]
  \label{thm:kwpartialisometryorbit}
  Let $A\in \mathcal{K}_+$.
  Then
  \begin{equation*}
    E(\mathcal{V}(A)) = \{ \diag d \mid d \in c_0^+, d \prec s(A) \}.
  \end{equation*}
\end{theorem}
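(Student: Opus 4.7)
I would prove the two containments separately, with the reverse one carrying essentially all of the technical weight.

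\textbf{Forward containment.} For $B = VAV^* \in \mathcal{V}(A)$ with $V^{*}V = R_A$, I first observe that $s(B) = s(A)$. Indeed, $V$ restricts to an isometry on $R_A\Hil$, and for any $\mu > 0$ and eigenvector $f \in R_A\Hil$ of $A$ with $Af = \mu f$, the vector $Vf$ is a nonzero eigenvector of $B$ with eigenvalue $\mu$ (using $V^{*}Vf = f$); the converse direction uses $V^{*}BV = R_A A R_A = A$. Since $B$ is positive and compact, its diagonal $d := (\angles{Be_n,e_n})_n$ lies in $c_0^+$. For any finite $F \subseteq \mathbb{N}$, Ky Fan's maximum principle gives
\begin{equation*}
  \sum_{n \in F} d_n = \trace(P_F B P_F) \leq \sum_{j=1}^{\abs{F}} s(B)_j,
\end{equation*}
which yields the partial-sum condition $\sum_{j=1}^m d^{*}_j \leq \sum_{j=1}^m s(A)_j$. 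When $A$ is trace class, cyclicity of the trace together with $V^{*}V = R_A$ and $R_A A = A$ gives $\trace B = \trace A$, so the total sums agree; otherwise both are infinite. Thus $d \prec s(A)$.

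\textbf{Reverse containment --- reduction.} Given $d \in c_0^+$ with $d \prec \lambda$ where $\lambda := s(A)$, I want to produce $V$ with $V^{*}V = R_A$ and $E(VAV^{*}) = \diag d$. Diagonalize $A = \sum_j \lambda_j f_j f_j^{*}$ with $\{f_j\}$ an orthonormal basis of $R_A\Hil$. If I can find an orthonormal system $\{g_j\}$ in $\Hil$ such that
\begin{equation*}
  \sum_j \lambda_j \sabs{\angles{g_j, e_n}}^2 = d_n \qquad (n \in \mathbb{N}),
\end{equation*}
then defining $Vf_j := g_j$ and $V := 0$ on $\ker A$ yields a partial isometry with $V^{*}V = R_A$ and $VAV^{*} = \sum_j \lambda_j g_j g_j^{*}$, which has exactly the diagonal $d$. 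Setting $u_{nj} := \angles{g_j,e_n}$ and $p_{nj} := \sabs{u_{nj}}^2$, the orthonormality of $\{g_j\}$ forces $\sum_n p_{nj} = 1$ for every $j$, while Bessel gives $\sum_j p_{nj} \leq 1$ for every $n$. So the analytic core reduces to constructing a doubly substochastic array $(p_{nj})$ with column sums \emph{equal} to one satisfying $d_n = \sum_j p_{nj}\lambda_j$, after which the matrix $(u_{nj})$ with orthonormal columns is built from $(p_{nj})$ by a careful choice of phases.

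\textbf{Where the difficulty lies.} Without loss of generality one may replace $d$ by $d^{*}$ (undoing the rearrangement by a final permutation of the $e_n$'s) and then construct $(p_{nj})$ by an infinite-dimensional version of Horn's classical inductive argument. The plan is to process the entries $d^{*}_1, d^{*}_2, \ldots$ in order, using at stage $n$ the inequality $\sum_{j=1}^n d^{*}_j \leq \sum_{j=1}^n \lambda_j$ to ``pay for'' $d^{*}_n$ out of still-unused mass from $\lambda_1, \lambda_2, \ldots$ by means of elementary $2\times 2$ Horn rotations, maintaining the invariant that the residual weights still majorize the unprocessed tail of $d^{*}$. The main obstacle, and the heart of the Kaftal--Weiss argument, is ensuring that every column sum $\sum_n p_{nj}$ equals exactly $1$ --- not merely at most $1$ --- since otherwise $\{g_j\}$ fails to be orthonormal and $V^{*}V \neq R_A$. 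This is where the full-sum condition in \Cref{def:weak-majorization} (when $\lambda \in \ell^1$), or else the divergence of $\sum_j \lambda_j$ combined with $\lambda_j \to 0$ from compactness of $A$, must be used delicately to prevent mass loss in the limit.
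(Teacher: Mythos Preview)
The paper is a survey and does not supply its own proof of this statement; it is simply quoted from \cite[Proposition~6.4]{KW-2010-JFA} with no accompanying argument. Hence there is no in-paper proof to compare your proposal against.

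That said, your outline is structurally sound. The forward containment is essentially complete: the observation $s(VAV^{*}) = s(A)$ is exactly the content of \Cref{prop:orbit-closure-equivalences} ($\mathcal{V}(A) = S(A)$), and Ky Fan plus the trace computation handle the majorization. For the reverse containment you have correctly reduced to building an orthonormal system $\{g_j\}$ realizing the prescribed diagonal, and you have accurately located the crux --- forcing the column sums of the substochastic array to be \emph{exactly} one so that $V^{*}V = R_A$. However, your proposal stops at naming this obstacle rather than overcoming it. The actual Kaftal--Weiss argument resolves it via a carefully organized infinite sequence of $T$-transforms (elementary $2\times 2$ orthostochastic moves) together with a bookkeeping scheme that guarantees each $\lambda_j$ is eventually fully consumed; the details are not routine, and your invocation of ``Horn's classical inductive argument'' does not by itself supply them. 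So the plan is correct, but what you have written is a roadmap rather than a proof: the hard step has been identified, not executed.
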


As we have already mentioned about all the results concerning compact operators thus far, they only characterize the diagonals \emph{modulo the dimension of the kernel}.
The next result, also from \cite{KW-2010-JFA}, is significant in that it overcomes this limitation, at least for positive compact operators with trivial kernel.
In the following, $\mathcal{D}$ denotes the diagonal operators.

\begin{theorem}[\protect{\cite[Proposition 6.6]{KW-2010-JFA}}] \label{thm:kwrangeprojectionidentity}
  Let $A\in \mathcal{K}_+$ whose range projection $R_A$ is the identity.
  Then 
  \begin{equation*}
    E(\mathcal{U}(A)) = E(\mathcal{V}(A))\cap\{ B\in\mathcal{D} \mid R_B=I \}.
  \end{equation*}
  From the equivalent viewpoint of diagonals as sequences, this becomes
  \begin{equation*}
    \D(A) = \{ d \in c_0^+ \mid d \prec s(A), d_n \not= 0 \ \text{for all}\ n \}.
  \end{equation*}
\end{theorem}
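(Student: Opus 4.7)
The forward inclusion is routine. If $\diag d \in E(\uorbit(A))$, write $d_n = \sangles{UAU^*e_n, e_n} = \snorm{A^{1/2}U^*e_n}^2$ for some unitary $U$. Since $R_A = I$ implies $\ker A^{1/2} = \{0\}$ and $U^*e_n$ is a unit vector, each $d_n$ is strictly positive. Moreover, $\diag d \in E(\mathcal{V}(A))$ follows from the trivial inclusion $\uorbit(A) \subseteq \mathcal{V}(A)$.

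For the reverse inclusion, assume $d \in c_0^+$ with $d \prec s(A)$ and $d_n > 0$ for every $n$. By \Cref{thm:kwpartialisometryorbit}, there exists a partial isometry $V$ with $V^*V = R_A = I$ (hence $V$ is an isometry) such that $E(VAV^*) = \diag d$. Set $P := VV^*$; if $P = I$ then $V$ is unitary and we are done, so suppose $Q := I - P \neq 0$. Writing $B := VAV^*$, the obstruction is that $\ker B = Q\Hil \neq \{0\}$, whereas any operator in $\uorbit(A)$ must be injective since $R_A = I$. The substance of the proof is to produce a genuine unitary $U$ with $E(UAU^*) = \diag d$.

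My plan is to promote $V$ to a unitary by an iterative procedure of finite-rank unitary adjustments that exploits the abundance of small diagonal entries. Fix an orthonormal basis $\{q_j\}$ of $Q\Hil$. Because $d_n > 0$ and $d_n \to 0$, one can select distinct indices $n_1 < n_2 < \cdots$ along which $d_{n_j}$ is arbitrarily small; at the $j$-th stage, a two-dimensional unitary rotation on a subspace built from $e_{n_j}$ and $q_j$ redirects a small amount of spectral content from $P\Hil$ into the $q_j$-direction, preserving the $(n_j,n_j)$-diagonal entry as well as all diagonal entries secured in earlier stages. Iterating through all $j$ and passing to a strong-operator limit of the partial products yields a unitary $U$ with $E(UAU^*) = \diag d$. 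The principal technical hurdle is verifying convergence of this infinite procedure to an operator that is genuinely unitary (rather than merely isometric) with the correct diagonal in the limit; the strict positivity $d_n > 0$ is exactly what prevents a basis vector from becoming trapped in the limiting kernel, which is the mechanism through which the hypothesis enters the conclusion.
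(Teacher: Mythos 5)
Your forward inclusion is correct and standard; that direction is not the issue.

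The reverse inclusion has a genuine gap. The central claim of your rotation scheme — that a two-dimensional rotation in $\operatorname{span}\{e_{n_j}, q_j\}$ can ``redirect spectral content into the $q_j$-direction'' while ``preserving the $(n_j,n_j)$-diagonal entry'' — is false. Compute it: if $B = VAV^*$, then $Bq_j = 0$ (since $q_j \in \ker B$), so also $\langle Be_{n_j}, q_j\rangle = \langle e_{n_j}, Bq_j\rangle = 0$, and after the rotation $W_\theta$ sending $e_{n_j} \mapsto \cos\theta\, e_{n_j} + \sin\theta\, q_j$ one gets
\begin{equation*}
  \bigl\langle W_\theta B W_\theta^* e_{n_j},\, e_{n_j}\bigr\rangle
  = \cos^2\theta\,\langle Be_{n_j},e_{n_j}\rangle + \sin^2\theta\,\langle Bq_j,q_j\rangle
  = \cos^2\theta\cdot d_{n_j}.
\end{equation*}
So the diagonal entry strictly decreases the moment $\theta\neq 0$. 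You cannot move the kernel without damaging the very entries you need to preserve, and because $d\prec s(A)$ is an \emph{equality} of sums (not merely an inequality), there is no slack in the total to absorb repeated multiplicative losses of this sort. You have identified the convergence of the infinite product as the ``principal technical hurdle,'' but the obstruction is earlier and more basic: even a single step of the proposed iteration breaks the diagonal.

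There is also a structural confusion worth flagging. Conjugating $B = VAV^*$ by unitaries never changes its rank, so no strong limit of $W_n\cdots W_1\, B\, W_1^*\cdots W_n^*$ can have trivial kernel when $B$ does not; and modifying $V$ by left-multiplication ($V\mapsto WV$) only conjugates the final-space projection $P = VV^*$, which cannot turn a proper projection into $I$. In either reading, the proposed surgery cannot produce an element of $\uorbit(A)$. The vectors $\phi_n := V^* e_n$ realize $\langle A\phi_n,\phi_n\rangle = d_n$ but form only a Parseval frame with Gram matrix $P$ rather than an orthonormal basis; converting such a frame to an orthonormal basis while holding every quadratic-form value $\langle A\phi_n,\phi_n\rangle$ fixed is a global problem, not one that decomposes into local $2\times 2$ rotations. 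The actual proof in \cite{KW-2010-JFA} does not attempt to repair the isometry from \Cref{thm:kwpartialisometryorbit} after the fact; it builds the unitary more directly from the majorization $d\prec s(A)$ together with the strict positivity of $d$, via a substantially more delicate block/iteration argument that tracks the majorization inequalities at every stage. Reducing to the partial-isometry theorem and then trying to patch the cokernel is the wrong starting point.
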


Since \cite{KW-2010-JFA}, it has become apparent to the authors and several other researchers (private communications) that understanding the interplay between the dimension of the kernel of a positive compact operator and its diagonal sequences is essential to characterizing diagonals of all selfadjoint operators more generally.
However, aside from the cases when the kernel is infinite dimensional or trivial, this remains an open problem.

\begin{openproblem}
  \label{prob:finite-kernel-schur-horn}
  Characterize, in terms of majorization, the diagonals of a positive compact operator with nontrivial, finite dimensional kernel.
  In particular, the following cases are important representative problems:
  \begin{enumerate}
  \item Characterize $\D\left(\diag\left(0,1,\frac{1}{2},\frac{1}{4},\frac{1}{8},\ldots\right)\right)$.
  \item Characterize $\D\left(\diag\left(0,1,\frac{1}{2},\frac{1}{3},\frac{1}{4},\ldots\right)\right)$.
  \end{enumerate}
\end{openproblem}

There has been some limited progress by the authors \cite{LW-2015-JFA} on the above problem, which we now describe.
In order to describe these results, we need the closely related notions of \emph{$p$-majorization} and \emph{approximate $p$-majorization}.

\begin{definition}
  \label{def:pmajorization}
  Given $d,\lambda\in c_0^+$ and $p \in \mathbb{Z}_{\ge 0}$, we say that $d$ is  \emph{$p$-majorized} by $\lambda$, denoted $d\prec_p\lambda$, if $d\prec\lambda$ and for sufficiently large $n$, we have the inequality
  \begin{equation*}
    \sum_{k=1}^{n+p} d^{*}_k \le \sum_{k=1}^{n} \lambda^{*}_k.
  \end{equation*}
  And $\infty$-majorization, denoted $d\prec_\infty\lambda$, means $d\prec_p\lambda$ for all $p\in\mathbb{N}$. 
\end{definition}

\begin{definition}
  \label{def:approximatepmajorization}
  Given $d,\lambda\in c_0^+$ and $p \in \mathbb{Z}_{\ge 0}$, we say that $d$ is \emph{approximately $p$-majorized} by $\lambda$, denoted $d\precsim_p\lambda$, if $d\prec\lambda$ and for every $\epsilon>0$, and for sufficiently large $n$, 
  \begin{equation*}
    \sum_{k=1}^{n+p} d^{*}_k \le  \sum_{k=1}^n \lambda^{*}_k + \epsilon \lambda^{*}_{n+1}.
  \end{equation*}
  Furthermore, if $d\precsim_p\lambda$ for infinitely many $p\in\mathbb{N}$ (equivalently obviously, for all $p\in\mathbb{N}$), this we call approximate $\infty$-majorization and denote it by $d\precsim_\infty\lambda$. 
\end{definition}

In the next theorem, for a positive compact operator $A$, $R_A^{\perp}$ is the projection onto the kernel of $A$, and so its trace is the dimension of the kernel.
Informally, this theorem says that for a sequence $d$:
\begin{enumerate}
\item If $d \prec_p s(A)$ and $d$ has $p$ fewer zeros than the dimension of the kernel of $A$, then $d \in \D(A)$.
\item If $d \in \D(A)$, then $d \precsim_p s(A)$ where $p$ is the difference in the number of zeros of $d$ and the dimension of the kernel of $A$.
\end{enumerate}

\begin{theorem}[\protect{\cite[Theorems~2.4 and 3.4]{LW-2015-JFA}}]\label{thm:sufficiencyofpmajorization}
  Let $A,B\in \mathcal{K}_+$,
  \begin{enumerate}
  \item \label{item:sufficiencyofpmajorization} If $B$ is a diagonal operator and for some $p \in \mathbb{Z}_{\ge 0} \cup \{\infty\}$,  $\trace R_B^\perp \le\trace R_A^\perp \le \trace R_B^\perp +p$ and $s(B)\prec_p s(A)$, then $B\in E(\mathcal{U}(A))$.
  \item \label{item:necessityofapproximatepmajorization} If $B\in E(\mathcal{U}(A))$, then $s(B) \precsim_p s(A)$, where
    \begin{equation*}
      p = \min \{ n\in \mathbb{Z}_{\ge 0} \cup \{\infty\} \mid \trace R_A^\perp \le\trace R_B^\perp +n \}
    \end{equation*}
  \end{enumerate}
\end{theorem}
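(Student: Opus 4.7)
The plan is to treat the two directions separately: sufficiency (i) by reducing to \Cref{thm:kwpartialisometryorbit} after absorbing the forced zeros of the diagonal sequence $d$ of $B$ into vectors of $\ker A$, and necessity (ii) by a trace-estimate argument using the strong convergence of the projections onto the top-$(n+p)$ indices of $d$.

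For part~(i), set $k_B := \trace R_B^\perp$ and $Z := \{n : d_n = 0\}$. Since $k_B \le \trace R_A^\perp$, I choose an orthonormal set $\{f_i\}_{i\in Z} \subseteq \ker A$; these vectors realize the zeros of $d$ and reduce the problem to exhibiting the strictly positive tail $\tilde d := (d_n)_{n\notin Z}$ as a diagonal of the compression $\tilde A := A\vert_{\Hil_1}$, where $\Hil_1 := (\spans\{f_i\}_{i\in Z})^{\perp}$. The operator $\tilde A$ is positive compact with $s(\tilde A) = s(A)$ and kernel dimension $\trace R_A^\perp - k_B \le p$, and the hypothesis reduces to $\tilde d \prec_p s(\tilde A)$. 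Applying \Cref{thm:kwpartialisometryorbit} yields a partial isometry $V$ on $\Hil_1$ with $V^{*}V = R_{\tilde A}$ and $E(V\tilde A V^{*}) = \diag \tilde d$; the crucial refinement is to arrange $(\range V)^\perp$ inside $\Hil_1$ to have dimension exactly $\dim \ker \tilde A$, which is precisely what $p$-majorization---beyond plain majorization---permits (and is the only nontrivial content when $\Hil_1$ is infinite-dimensional with $\dim R_{\tilde A}\Hil_1 = \infty$). With such a $V$ in hand, extend it to a unitary $U$ on $\Hil_1$ by sending $\ker\tilde A$ isometrically onto $(\range V)^{\perp}$; since $\tilde A$ annihilates $\ker\tilde A$, we have $U\tilde A U^{*} = V\tilde A V^{*}$, so $\diag \tilde d \in E(\mathcal{U}(\tilde A))$, and reinstating the vectors $\{f_i\}$ gives $B \in E(\mathcal{U}(A))$.

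For part~(ii), set $T := UAU^{*}$ so that $B = E(T)$. Positivity of $T$ forces $\ker B = \spans\{e_n : d_n = 0\} \subseteq \ker T$, whence $W := \ker T \ominus \ker B \subseteq (\ker B)^\perp$ has dimension $p$ (I treat $p<\infty$; the case $p=\infty$ is analogous). For $n$ large let $I_n$ index the $n+p$ largest entries of $d$; this set avoids every zero position and $\sum_{k=1}^{n+p} d_k^{*} = \trace(P_{I_n} T P_{I_n})$. The strong convergence $P_{I_n} \to P_{(\ker B)^\perp}$, combined with $\dim W < \infty$, implies that $W_n := P_{I_n} W$ is $p$-dimensional for $n$ large and $\snorm{(I-P_{I_n})\vert_W}_{op} \to 0$. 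Decomposing $P_{I_n} = Q_n + P_{W_n}$, where $Q_n$ is the rank-$n$ orthogonal projection onto $\range(P_{I_n}) \ominus W_n$, Ky~Fan's maximum principle gives $\trace(Q_n T Q_n) \le \sum_{k=1}^n s(A)_k$, while Cauchy interlacing yields $\mu_{n+j}(P_{I_n} T P_{I_n}) \le \snorm{P_{W_n} T P_{W_n}}_{op}$ for $j = 1,\ldots,p$. Using $Tv = 0$ for $v \in W$, a short calculation gives the identity $\langle Tw,w\rangle = \langle T(I-P_{I_n})v,(I-P_{I_n})v\rangle$ for $w = P_{I_n}v$, which reduces the estimate of $\snorm{P_{W_n} T P_{W_n}}_{op}$ to a tail quadratic form of $T$ on the indices outside $I_n$. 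Combining yields $\sum_{k=1}^{n+p} d_k^{*} \le \sum_{k=1}^n s(A)_k + p\,\snorm{P_{W_n} T P_{W_n}}_{op}$, and approximate $p$-majorization follows once one establishes $\snorm{P_{W_n} T P_{W_n}}_{op} \le (\epsilon/p)\,s(A)_{n+1}$ for $n$ large.

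The main obstacle, in both directions, is aligning the $p$-majorization condition with the kernel codimension. In (i) it amounts to showing that the Kaftal--Weiss construction of $V$ can be carried out with prescribed range codimension in $\Hil_1$, which is possible precisely under $p$-majorization rather than plain majorization. In (ii) it amounts to converting the qualitative convergence $\snorm{(I-P_{I_n})\vert_W}_{op} \to 0$ into a quantitative decay rate against $s(A)_{n+1}$; this delicate step, which uses $d \prec s(A)$ in a sharpened form together with the smallness of diagonal entries of $T$ outside $I_n$, is also what forces the necessity conclusion to be only \emph{approximate} $p$-majorization rather than exact $p$-majorization.
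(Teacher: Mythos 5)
This statement is cited from \cite{LW-2015-JFA} without proof in the survey, so I assess your plan on its own merits. It is a reasonable skeleton, but in both directions the step you correctly flag as ``the crucial refinement'' or ``the delicate step'' is precisely the theorem's entire content, and neither is actually proved.

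In part (i), the reduction to the compression $\tilde A$ on $\Hil_1$, and the extension of a partial isometry $V$ to a unitary $U$ with $U\tilde AU^*=V\tilde AV^*$ by sending $\ker\tilde A$ onto $(\range V)^\perp$, are both correct. The gap is your claim that one can produce a $V$ with $V^*V=R_{\tilde A}$, $E(V\tilde AV^*)=\diag\tilde d$, \emph{and} the prescribed range codimension $\dim(\range V)^\perp=\dim\ker\tilde A$. \Cref{thm:kwpartialisometryorbit} only asserts existence of some such $V$ with no control whatsoever on $\range V$; to obtain that control one has to reopen the Kaftal--Weiss inductive block construction and track at each stage how much of the complement of the range is consumed, and show the surplus supplied by $\prec_p$ (versus plain $\prec$) is exactly enough. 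That is the technical heart of \cite[Theorem~2.4]{LW-2015-JFA}, and you gesture at it without doing it.

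In part (ii), the scaffolding is sound: $\ker B\subseteq\ker T$ by positivity, $W:=\ker T\ominus\ker B$ has dimension $p$, $\sum_{k\le n+p}d_k^*=\trace(P_{I_n}TP_{I_n})$, and a min--max argument bounds the bottom $p$ eigenvalues of $P_{I_n}TP_{I_n}$ by $\snorm{P_{W_n}TP_{W_n}}$. But the decisive estimate $\snorm{P_{W_n}TP_{W_n}}\le(\epsilon/p)\,s(A)_{n+1}$ for $n$ large is not established, and the route you sketch does not obviously reach it. Bounding the tail form $\langle T(I-P_{I_n})v,(I-P_{I_n})v\rangle$ by $\snorm{T}\,\snorm{(I-P_{I_n})\vert_W}^2$ only gives $o(1)$ with no coupling to $s(A)_{n+1}$, whose decay is governed by $A$ while $\snorm{(I-P_{I_n})\vert_W}^2$ is governed by the $\ell^2$ tails of a fixed finite set of vectors. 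Sharpening via $\abs{T_{jk}}\le\sqrt{d_jd_k}$ yields $\snorm{P_{W_n}TP_{W_n}}\le\snorm{(I-P_{I_n})\vert_W}^2\sum_{k\notin I_n}d_k$, but this presumes $d$ is summable, which the theorem does not assume. So there is a genuine gap in both parts: the plan is a correct reduction to the hard steps, but the hard steps themselves are absent, and the naive bounds you propose for (ii) are likely insufficient as stated.
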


The $p$-majorization condition in \Cref{thm:sufficiencyofpmajorization}\ref{item:sufficiencyofpmajorization} is known not to be a necessary condition for $d$ to be a diagonal of $A$ \cite[Example~2.6]{LW-2015-JFA}.
In contrast, it is not known whether approximate $p$-majorization in \Cref{thm:sufficiencyofpmajorization}\ref{item:necessityofapproximatepmajorization} is a sufficient condition for $d$ to be a diagonal of $A$.
However, since $\infty$-majorization and approximate $\infty$-majorization turn out to be equivalent concepts, there is the following corollary which characterizes diagonals of positive compact operators with infinite dimensional kernel.

\begin{corollary}[\protect{\cite[Corollary~3.5]{LW-2015-JFA}}]
  \label{cor:conjecturetrueforinfinitemo}
  Suppose $A\in \mathcal{K}_+$ has infinite rank and infinite dimensional kernel $(\trace R_A=\infty=\trace R_A^\perp)$.
  Then
  \[ E(\mathcal{U}(A)) = E(\mathcal{U}(A))_{fk}\sqcup E(\mathcal{U}(A))_{ik}, \]
  the members of $E(\mathcal{U}(A))$ with finite dimensional kernel and infinite dimensional kernel, respectively, are characterized by   
  \[ E(\mathcal{U}(A))_{fk}=\{ B\in\mathcal{D}\cap \mathcal{K}_+\mid s(B)\prec_\infty s(A)\quad\text{and}\quad \trace R_B^\perp<\infty\} \]
  and
  \[ E(\mathcal{U}(A))_{ik}=\{ B\in\mathcal{D}\cap \mathcal{K}_+ \mid s(B)\prec s(A)\quad\text{and}\quad \trace R_B^\perp=\infty\}. \]
\end{corollary}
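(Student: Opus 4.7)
The plan is to decompose the problem by whether $B$ has finite or infinite dimensional kernel and to apply \Cref{thm:sufficiencyofpmajorization} in each case, relying on the equivalence of $\prec_\infty$ and $\precsim_\infty$ noted in the paragraph immediately following that theorem.

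Since every element of $\mathcal{D} \cap \mathcal{K}_+$ has either finite or infinite dimensional kernel, the decomposition $E(\mathcal{U}(A)) = E(\mathcal{U}(A))_{fk} \sqcup E(\mathcal{U}(A))_{ik}$ is simply the partition by $\trace R_B^\perp < \infty$ versus $\trace R_B^\perp = \infty$, so only the two set equalities need to be verified.

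For the infinite-kernel case, $\trace R_A^\perp = \trace R_B^\perp = \infty$, so the $p$ in \Cref{thm:sufficiencyofpmajorization}(ii) is $\min\{n \mid \infty \le \infty + n\} = 0$. Necessity therefore yields $s(B) \precsim_0 s(A)$, which coincides with $s(B) \prec s(A)$. For the converse, given $s(B) \prec s(A)$, I would apply \Cref{thm:sufficiencyofpmajorization}(i) with $p=0$: the chain $\trace R_B^\perp \le \trace R_A^\perp \le \trace R_B^\perp + 0$ reads $\infty \le \infty \le \infty$, and $\prec_0$ agrees with $\prec$, so $B \in E(\mathcal{U}(A))$. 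For the finite-kernel case, since $\trace R_A^\perp = \infty > \trace R_B^\perp$, the only admissible $p$ in \Cref{thm:sufficiencyofpmajorization}(ii) is $p = \infty$, yielding $s(B) \precsim_\infty s(A)$; the equivalence of $\prec_\infty$ and $\precsim_\infty$ then upgrades this to $s(B) \prec_\infty s(A)$. Conversely, given $s(B) \prec_\infty s(A)$, I would apply \Cref{thm:sufficiencyofpmajorization}(i) with $p = \infty$, noting that $\trace R_B^\perp \le \infty \le \trace R_B^\perp + \infty$ holds trivially, giving $B \in E(\mathcal{U}(A))$.

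The only nontrivial ingredient is the equivalence of $\infty$-majorization and approximate $\infty$-majorization. This is exactly what closes the gap between the $\prec_p$ side and the $\precsim_p$ side of \Cref{thm:sufficiencyofpmajorization} in the finite-kernel case; the infinite-kernel case needs no such bridging because $p=0$ is forced and $\prec_0$, $\precsim_0$, and $\prec$ all coincide. Consequently, I expect essentially no obstacles beyond a careful accounting of which values of $p$ are admissible on each side of \Cref{thm:sufficiencyofpmajorization}, together with the invocation of that equivalence as a black box.
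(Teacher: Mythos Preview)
Your proposal is correct and follows exactly the approach the paper indicates: apply both directions of \Cref{thm:sufficiencyofpmajorization} with $p=0$ in the infinite-kernel case and $p=\infty$ in the finite-kernel case, invoking the stated equivalence of $\prec_\infty$ and $\precsim_\infty$ to close the gap in the latter. The paper does not give an explicit proof of the corollary beyond this remark, so there is nothing further to compare.
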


Essentially, \Cref{thm:sufficiencyofpmajorization} says that if $A$ is a positive compact operator with infinite dimensional kernel, then $d \in \D(A)$ if and only if either (i) $d \prec s(A)$ and $d$ has infinitely many zeros, or (ii) $d \prec_{\infty} s(A)$.
Note that the infinite rank condition in \Cref{thm:sufficiencyofpmajorization} is not a true limitation, since the case when $A$ has finite rank was addressed as far back as \cite{AK-2006-OTOAaA} (because $\closure{\uorbit(A)}^{\norm{\cdot}_1} \!\! = \, \uorbit(A)$ by \Cref{prop:orbit-closure-equivalences}).

\section{Finite spectrum selfadjoint operators}
\label{sec:finite-spectrum-selfadjoint}

As we remarked in \Cref{sec:compact-selfadjoint}, Kadison authored two of the pioneering papers \cite{Kad-2002-PNASU,Kad-2002-PNASUa} which led to a resurgence in the study of diagonals of selfadjoint operators.
In these papers, Kadison investigated diagonals of projections starting from first principles, namely, the Pythagorean Theorem.
For the link between the Pythagorean Theorem and diagonals of projections, we refer the reader to Kadison's original paper \cite{Kad-2002-PNASU}.
However, the real surprise came in the second paper, where Kadison completely characterized diagonals of projections in $B(\Hil)$ with $\Hil$ separable and infinite dimensional, in which an unexpected integer appeared:

\begin{theorem}[\protect{\cite[Theorem~15]{Kad-2002-PNASUa}}]
  \label{thm:kadison-carpenter-pythagorean}
  A sequence $(d_n)$ is the diagonal of a projection $P$ if and only if it takes values in the unit interval and the quantities
  \begin{equation*}
    a := \sum_{d_n < \nicefrac{1}{2}} d_n \quad\text{and}\quad b := \sum_{d_n \ge \nicefrac{1}{2}} (1-d_n)
  \end{equation*}
  satisfy one of the mutually exclusive conditions
  \begin{enumerate}
  \item \label{item:a+b-infinite} $a+b = \infty$;
  \item \label{item:a+b-finite} $a+b < \infty$ and $a-b \in \mathbb{Z}$.
  \end{enumerate}
\end{theorem}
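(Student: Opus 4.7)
The plan is to treat necessity and sufficiency separately; within each direction, the cases $a+b=\infty$ and $a+b<\infty$ require qualitatively different arguments.

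\emph{Necessity.} That each $d_n \in [0,1]$ is immediate from $0 \le P \le I$. The substantive assertion is $a-b \in \mathbb{Z}$ when $a+b < \infty$. Partition $\mathbb{N} = I \sqcup J$ with $I = \{n : d_n < \nicefrac{1}{2}\}$ and $J = \{n : d_n \ge \nicefrac{1}{2}\}$, and write $P$ in $2\times 2$ block form relative to $\Hil = \ell^2(I) \oplus \ell^2(J)$ as
\[
  P = \begin{pmatrix} A & B \\ B^* & C \end{pmatrix}.
\]
Set $D := I - C$. The hypothesis $a+b<\infty$ says the positive operators $A$ and $D$ have $\ell^1$-diagonals with $\trace A = a$ and $\trace D = b$, so they are trace class. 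Expanding $P^2 = P$ yields the identities $BB^* = A(I-A)$ and $B^*B = D(I-D)$; in particular $B$ is Hilbert--Schmidt. Setting $Q = \bigl(\begin{smallmatrix} 0 & 0 \\ 0 & I\end{smallmatrix}\bigr)$, the difference $P - Q$ is Hilbert--Schmidt. From the identity $(P-Q)^3 = (P-Q) + QPQ - PQP$ (obtained by expansion and repeated use of $P^2=P$, $Q^2=Q$), a short block computation gives
\[
  (P-Q)^3 = \begin{pmatrix} A^2 & BD \\ DB^* & -D^2 \end{pmatrix},
\]
which is trace class with $\trace((P-Q)^3) = \trace A^2 - \trace D^2$. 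Combined with $\trace A - \trace A^2 = \trace BB^* = \trace B^*B = \trace D - \trace D^2$, this rearranges to $a - b = \trace((P-Q)^3)$. Since $P-Q$ is compact, $(P,Q)$ is a Fredholm pair of projections, and a classical fact says that this trace equals the index $\dim(P\Hil \cap (I-Q)\Hil) - \dim(Q\Hil \cap (I-P)\Hil) \in \mathbb{Z}$, completing necessity.

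\emph{Sufficiency.} In both cases, the strategy is to realize $P$ as an orthogonal direct sum of finite-dimensional projections constructed via the finite-dimensional Schur--Horn theorem: every diagonal $(x_1,\ldots,x_n) \in [0,1]^n$ of integer sum $k \in \{0,\ldots,n\}$ is the diagonal of some $n \times n$ rank-$k$ projection, since $(x_1,\ldots,x_n) \prec (\underbrace{1,\ldots,1}_{k},0,\ldots,0)$ automatically. When $a+b = \infty$, one inductively partitions $\mathbb{N}$ into consecutive finite blocks $F_1, F_2, \ldots$ with $\sum_{n \in F_j} d_n \in \mathbb{Z}$: the divergence of either $\sum_I d_n$ or $\sum_J (1-d_n)$ guarantees enough padding indices of the appropriate type are available to drive each running partial sum to an integer. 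When $a+b < \infty$ and $a-b = k \in \mathbb{Z}$, the $d_n \in \{0,1\}$ are placed in trivial blocks; one then absorbs a sufficiently large finite initial segment of the remaining entries into a single finite-rank projection whose rank is chosen, using $a-b=k$, to balance the offset, after which the tail (on which the total defect of small and large entries now cancels) can be partitioned into $2 \times 2$ or $3 \times 3$ blocks with integer diagonal sums and realized by Schur--Horn.

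The main obstacle is the second sufficiency case: engineering the partition of the tail into finite projection blocks requires that the sums of excess-over-$0$ and of deficit-below-$1$ be finite and balance each other modulo integers, and this is exactly what the condition $a-b \in \mathbb{Z}$ provides. Without integrality, no finite block could close off the residual mass left by the tail.
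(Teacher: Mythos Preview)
The paper is a survey and does not itself prove \Cref{thm:kadison-carpenter-pythagorean}; the result is quoted from Kadison, and the surrounding text catalogues several published proofs of the necessity direction (Kadison's original ``arbitrarily close to an integer'' argument, Arveson's Fredholm-index interpretation, Kaftal--Ng--Zhang, Argerami--Effros, and the essential-codimension formulation of \Cref{thm:kadison-integer-essential-codimension}) together with Bownik--Jasper's constructive proof of sufficiency.

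Your necessity argument is correct and is precisely the essential-codimension route the paper highlights in \Cref{thm:kadison-integer-essential-codimension}: the block computation giving $(P-Q)^2 = \diag(A,D)$ and hence $\trace((P-Q)^3) = \trace A^2 - \trace D^2 = a-b$ is clean, and the identification of that trace with the Fredholm index of the pair $(P,Q)$ is exactly $[P:Q]$ in \Cref{def:essential-codimension}. So you have independently reproduced the Kaftal--Loreaux explanation of Kadison's integer.

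Your sufficiency plan is the standard block-by-block strategy underlying both Kadison's and Bownik--Jasper's constructions. The $a+b=\infty$ case is fine as sketched. The $a+b<\infty$ case is where the actual work lies, and your description (``absorb an initial segment \ldots after which the tail can be partitioned into $2\times 2$ or $3\times 3$ blocks with integer diagonal sums'') glosses over the delicate part: one must pair the small entries $d_n < \nicefrac{1}{2}$ with the deficits $1-d_m$ for $d_m \ge \nicefrac{1}{2}$ so that each finite block has integer sum, and arranging this without leftover mass requires a careful matching argument using both the summability $a+b<\infty$ and the integrality $a-b\in\mathbb{Z}$. As a plan it is correct, but be aware that this step carries most of the technical content.
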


Since the advent of this theorem, there have been three primary outgrowths.
Firstly, there was a push to generalize Kadison's result to arbitrary selfadjoint operators with finite spectrum.
This is a natural extension because projections are just selfadjoint operators with two-point spectrum, suitably normalized.
Secondly, several authors have tried to explain the integer appearing in \Cref{thm:kadison-carpenter-pythagorean}.
Thirdly, Arveson found a generalization of this integer condition for certain normal operators with finite spectrum.
We will discuss the first two of these in this section, and the third in \Cref{sec:normal-operators}.
Of course, we would be remiss if we didn't mention that the first paper \cite{Kad-2002-PNASU} launched an investigation of ``diagonals'' (expectations of unitary orbits) in von Neumann factors (\cite{AM-2013-PJM,AM-2008-JMAA,AM-2007-IUMJ,BR-2014-PAMS}), but as mentioned in our introduction, this topic is outside the scope of this survey.

In a series of papers \cite{Jas-2013-JFA,BJ-2015-TAMS,BJ-2015-BPASM}, Bownik and Jasper managed to extend Kadison's theorem to arbitrary finite spectrum selfadjoint operators.
In \cite{Jas-2013-JFA}, Jasper handles the case when the selfadjoint operator has three points in the spectrum.
In \cite{BJ-2015-TAMS}, Bownik and Jasper do all the legwork to deal with the general case.
However, this results in a very complex theorem, in part because there is a great deal which depends on the precise multiplicity of each of the eigenvalues.
So, in \cite{BJ-2015-BPASM}, they provide a slightly simplified version of their theorem.
This new version, although still somewhat complex, is remarkably easier to state (see \Cref{thm:bownik-jasper-finite-spectrum} below), and it comes with an entirely independent and much shorter proof.

\begin{theorem}[\protect{\cite[Theorem~1.2]{BJ-2015-BPASM}}]
  \label{thm:bownik-jasper-finite-spectrum}
  Let $\{ \lambda_j \}_{j=0}^{n+1}$ be an increasing sequence of real numbers such that $\lambda_0 = 0$ and $\lambda_{n+1} = B$.
  Let $d$ be a sequence with values in $[0,B]$ such that $\sum d_j = \sum (B-d_j) = \infty$.
  For each $\alpha \in (0,B)$, define
  \begin{equation*}
    C(\alpha) = \sum_{d_j < \alpha} d_j \quad\text{and}\quad D(\alpha) = \sum_{d_j \ge \alpha} (B - d_j).
  \end{equation*}
  There exists a selfadjoint operator $T$ with finite spectrum $\spec(T) = \{\lambda_j\}_{j=0}^{n+1}$ and diagonal $d$ if and only if either
  \begin{enumerate}
  \item $C(B/2) + D(B/2) = \infty$, or 
  \item $C(B/2) + D(B/2) < \infty$ and there exist $N_1, \ldots, N_n \in \mathbb{N}$ and $k \in \mathbb{Z}$ such that
    \begin{equation*}
      C(B/2) - D(B/2) = \sum_{j=1}^n \lambda_j N_j + k B
    \end{equation*}
    and for all $1 \le r \le n$,
    \begin{equation*}
      (B-\lambda_r) C(\lambda_r) + \lambda_r D(\lambda_r) \ge (B - \lambda_r) \sum_{j=1}^r \lambda_j N_j + \lambda_r \sum_{j=r+1}^n (B-\lambda_j)N_j.
    \end{equation*}
  \end{enumerate}
\end{theorem}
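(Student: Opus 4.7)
The plan is a direct construction for sufficiency paired with an accounting argument for necessity, both pivoting on \Cref{thm:kadison-carpenter-pythagorean} (the $n=0$ case, after rescaling $\{0,B\}$ to $\{0,1\}$) together with the finite-dimensional Schur--Horn theorem \Cref{thm:schur-horn}. The finite-spectrum hypothesis $\spec(T)=\{\lambda_j\}_{j=0}^{n+1}$ makes $T$ essentially a scalar multiple of a projection: on the spectral subspaces for $\lambda_0=0$ and $\lambda_{n+1}=B$ it is $0\oplus BI$, and the intermediate spectral projections $P_1,\ldots,P_n$ onto $\ker(T-\lambda_j I)$ carry the ``defect,'' which in case (ii) is forced to be finite rank $N_j$.

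For necessity, decompose $T=\sum_{j=0}^{n+1}\lambda_j P_j$ and suppose we are in case (ii). The summability $C(B/2)+D(B/2)<\infty$ forces each $P_j$ ($1\le j\le n$) to have finite rank $N_j$: any infinite-rank intermediate spectral projection would push infinitely many diagonal entries a uniform distance away from $\{0,B\}$ and hence contribute infinite mass to $C(B/2)+D(B/2)$. Now pick the diagonal projection $Q$ equal to $I$ on $\{j:d_j\ge B/2\}$ and $0$ elsewhere; a term-by-term rearrangement of $\sum_j(d_j-B\,\mathbf{1}_{d_j\ge B/2})$ against the spectral decomposition of $T$ yields
\[
C(B/2)-D(B/2)\;=\;\trace(T-BQ)\;=\;\sum_{j=1}^{n}\lambda_j N_j+kB,
\]
where $k\in\mathbb{Z}$ measures the finite-rank discrepancy between $Q$ and the spectral projection $P_{n+1}$ (this is well-defined precisely because all the off-diagonal spectral blocks of $T-BQ$ have summable diagonal). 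Repeating the same regularized-trace computation at threshold $\lambda_r$ in place of $B/2$, and bounding from below by the forced contributions of the eigenvalues $\lambda_j$ that lie on each side of $\lambda_r$, yields the $r$-inequality.

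For sufficiency, select a finite $F\subseteq\mathbb{N}$ with $\abs{F}=\sum_{j=1}^n N_j$ chosen so that the partial diagonal $(d_j)_{j\in F}$ is dominated, in the sense of \Cref{thm:schur-horn}, by the spectrum $(\lambda_1,\ldots,\lambda_1,\ldots,\lambda_n,\ldots,\lambda_n)$ (each $\lambda_j$ repeated $N_j$ times). On $F$ invoke \Cref{thm:schur-horn} to build a finite selfadjoint block with that spectrum and diagonal. On $G=\mathbb{N}\setminus F$, apply \Cref{thm:kadison-carpenter-pythagorean} (rescaled to $\{0,1\}$) to construct a projection $P_G$ with diagonal $(d_j/B)_{j\in G}$; the operator $T$ is then the direct sum of the $F$-block with $BP_G$. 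The hypothesis $\sum d_j=\sum(B-d_j)=\infty$ ensures that $P_G$ is neither $0$ nor $I$ on $G$, so the full spectrum is exactly $\{\lambda_j\}_{j=0}^{n+1}$. The integer identity in the hypothesis transfers to Kadison's $a-b\in\mathbb{Z}$ condition on $G$ once one subtracts the finite contribution of $F$, while the $r$-inequalities are exactly what is needed for the Schur--Horn majorization on $F$.

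The main obstacle will be engineering the partition $\mathbb{N}=F\sqcup G$ so that both the finite Schur--Horn majorization on $F$ and Kadison's integrality on $G$ hold simultaneously, and that the finite block realizes \emph{all} of $\lambda_1,\ldots,\lambda_n$ as eigenvalues without introducing extraneous ones. The $r$-inequalities are built to encode this compatibility precisely --- each one governs the mass available on one side of the threshold $\lambda_r$, reflecting the finite-dimensional partial-sum inequalities in \eqref{eq:majorization-finite} at rank $\sum_{j\le r}N_j$. Delicate bookkeeping at the boundary values $d_j\in\{0,B,\lambda_1,\ldots,\lambda_n\}$ and in case (i), where no integer constraint is present but one must still arrange $F$ compatibly, accounts for the remaining technical weight of the argument.
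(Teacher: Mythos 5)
This theorem is cited from Bownik--Jasper \cite[Theorem~1.2]{BJ-2015-BPASM}; the survey paper gives no proof of it, so there is no in-paper argument to compare against. Evaluating your proposal on its own terms, the strategy of isolating the intermediate eigenvalues into a finite-rank block and handling the two-point tail via Kadison's theorem is philosophically in the right spirit, but as written it has two genuine gaps, one of which is fatal.

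The fatal gap is in sufficiency. You want to choose a finite $F\subseteq\mathbb{N}$ and apply the finite-dimensional Schur--Horn theorem \Cref{thm:schur-horn} to $(d_j)_{j\in F}$ against the multiset $\{\lambda_j \text{ with multiplicity } N_j\}$. But \Cref{thm:schur-horn} requires exact trace equality: $\sum_{j\in F} d_j = \sum_{j=1}^n N_j\lambda_j$. The $d_j$ are arbitrary reals in $[0,B]$; there is no reason whatsoever that a finite subset of them should sum to this prescribed value, and generically none will. This is not ``delicate bookkeeping'' at the end of the argument --- it obstructs the construction at the first step. The way Kadison and Bownik--Jasper circumvent this is by \emph{not} partitioning the index set cleanly: one conjugates by blocks of $2\times 2$ unitaries (``pinching'') to redistribute diagonal mass between indices while holding the underlying operator fixed, so that the eventual split into a finite piece and a projection piece can be made to have the right traces. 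You cannot dispense with this device; without it the direct-sum strategy cannot be launched.

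The secondary gap is in necessity. Your argument that $C(B/2)+D(B/2)<\infty$ forces each intermediate spectral projection $P_j$ to have finite rank --- ``any infinite-rank intermediate spectral projection would push infinitely many diagonal entries a uniform distance away from $\{0,B\}$'' --- is not correct as a justification. The diagonal entries of a selfadjoint operator are convex averages of its spectrum, not a sample of it, so an eigenvalue of infinite multiplicity does not directly ``push'' any diagonal entry to sit near it. The conclusion you want is true, but it is exactly the content of the paper's own \Cref{cor:kaftal-loreaux-corollary}, which rests on the nontrivial \Cref{lem:kaftal-loreaux-restriction-lemma}. Similarly, the identity $C(B/2)-D(B/2)=\trace(T-BQ)$ does not literally make sense: $P_{n+1}-Q$ is only Hilbert--Schmidt in general, so $T-BQ$ need not be trace-class; what you have is the trace of the \emph{diagonal} $E(T-BQ)$, and extracting the integer $k$ requires the essential-codimension machinery of \Cref{thm:kadison-integer-essential-codimension,thm:loreaux-normal-essential-codimension} rather than a naive trace.
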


Note that \Cref{thm:bownik-jasper-finite-spectrum} does not specify the precise multiplicity of the eigenvalues.
This is not a deficiency of the theorem, but rather a feature; Bownik--Jasper have a theorem which completely characterizes diagonals of a finite spectrum selfadjoint operator with specified multiplicities of the eigenvalues \cite[Theorem~1.3]{BJ-2015-TAMS}, but that theorem is significantly more cumbersome.

In addition to their generalizations of \Cref{thm:kadison-carpenter-pythagorean}, Bownik and Jasper also provided a new and different proof of the sufficiency direction of this theorem \cite{BJ-2014-CMB}, which Kadison referred to as the Carpenter's theorem, i.e., that conditions \ref{item:a+b-infinite} and \ref{item:a+b-finite} of \Cref{thm:kadison-carpenter-pythagorean} imply that $d$ is a diagonal of a projection $P$.
This new proof was constructive, and as a byproduct ensured that the theorem was true even for \emph{real} Hilbert spaces, not just complex ones.
While that may seem like an esoteric distinction, this is a topic that has actually arisen repeatedly in the study of diagonals of selfadjoint operators, even as far back as Horn's original paper \cite{Hor-1954-AJM} (see also \cite{KW-2010-JFA} in their discussion of orthogonal matrices, i.e., unitaries with real entries; or see \cite{JLW-2016-IUMJ}).

The other primary outgrowth of \Cref{thm:kadison-carpenter-pythagorean} is the elucidation of the necessity direction (which Kadison referred to as the Pythagorean Theorem), particularly the integer in condition \Cref{thm:kadison-carpenter-pythagorean}\ref{item:a+b-finite}.
Even Kadison referred to it as ``the curious `integrality' condition imposed on $a-b$'' \cite[p.~5220]{Kad-2002-PNASUa}.
Perhaps more surprising is Kadison's proof, which concludes: ``As $a-b$ is arbitrarily close to an integer, $a-b$ is an integer'' \cite[p.~5221]{Kad-2002-PNASUa}.
This is a rather analytic way to prove some quantity is an integer, and in this case the proof is rather opaque and does not lend much insight into the origin of this integer.

As a result of this unexplained integer, several authors have given new proofs of the necessity of \Cref{thm:kadison-carpenter-pythagorean}\ref{item:a+b-finite}.
First, in \cite{Arv-2007-PNASU} Arveson recognized the integer as the index of a certain Fredholm operator, but that description lacked a natural explanation of the role of this Fredholm operator.
Later, Kaftal, Ng and Zhang gave another independent proof \cite[Corollary~3.6]{KNZ-2009-JFA} of the necessity of \Cref{thm:kadison-carpenter-pythagorean}\ref{item:a+b-finite} while working on a seemingly unrelated topic: strong sums of projections in von Neumann factors.
More recently, Argerami provided yet another proof [Argerami, Theorem~4.6] based on an argument of Effros [Effros, Lemma~4.1].
However, the next theorem, due jointly to the first author and V.~Kaftal, provides a very direct path from the original projection $P$ to the integer $a-b$ via the notion of essential codimension of projections due to Brown, Douglas and Fillmore \cite[Remark~4.9]{BDF-1973-PoaCoOT}.

\begin{definition}[\protect{\cite[Remark~4.9]{BDF-1973-PoaCoOT}}]
  \label{def:essential-codimension}
  For projections $P,Q$ with $P-Q$ compact, the \emph{essential codimension} of $Q$ in $P$, denoted $[P:Q]$, is the integer defined by
  \begin{equation*}
    [P:Q] :=
    \begin{cases}
      \trace P-\trace Q & \text{if}\ \trace P\ \text{and}\ \trace Q < \infty, \\[0.5em]
      \ind(V^{*}W) & \parbox[c][2em]{0.5\textwidth}{if $\trace P = \trace Q = \infty$, where \\
       $W^{*}W = V^{*}V = I, WW^{*} = P, VV^{*} = Q$.} \\[0.4em]
    \end{cases}
  \end{equation*}
  An equivalent alternative definition is $[P:Q] := \ind(QP)$ where $QP : P\Hil \to Q\Hil$.
\end{definition}

\begin{theorem}[\protect{\cite[Theorem~1.3]{KL-2017-IEOT}}]
  \label{thm:kadison-integer-essential-codimension}
  With the notations of \Cref{thm:kadison-carpenter-pythagorean}, if $P \in B(\Hil)$ is a projection with $a + b < \infty$ and $Q$ is the projection onto $\spans \{ e_j \mid d_j \ge \nicefrac{1}{2} \}$, then $P-Q$ is Hilbert--Schmidt and $a - b = [P:Q]$ is the essential codimension of the pair $P,Q$.
\end{theorem}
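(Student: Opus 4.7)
The plan is to invoke the alternative characterization $[P:Q] = \ind(QP)$ where $QP: P\Hil \to Q\Hil$, from \Cref{def:essential-codimension}, which applies once $P-Q$ is known to be compact; this handles both branches of the definition uniformly. The strategy is to establish that $P-Q$ is Hilbert--Schmidt by a direct diagonal computation, identify the kernels of $T := QP|_{P\Hil}$ and $T^*$ to write $\ind T = m - n$, and then extract $a - b = m - n$ from the traces of the positive trace-class operators $PQ^\perp P|_{P\Hil} = I - T^*T$ and $QP^\perp Q|_{Q\Hil} = I - TT^*$.

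\emph{Step 1 (Hilbert--Schmidt).} Expand $(P-Q)^2 = P + Q - PQ - QP$. Since $Q$ is the diagonal projection with entries $q_n := \mathbf{1}_{d_n \ge 1/2} \in \{0,1\}$, one has $\langle PQe_n, e_n\rangle = q_n d_n$, so the $n$th diagonal entry of $(P-Q)^2$ equals $d_n + q_n - 2q_n d_n$, which is $1 - d_n$ when $d_n \ge 1/2$ and $d_n$ when $d_n < 1/2$. Summing gives $\|P-Q\|_2^2 = \trace((P-Q)^2) = a + b < \infty$, so $P-Q$ is Hilbert--Schmidt, in particular compact, and $T$ is Fredholm with $[P:Q] = \ind T$.

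\emph{Step 2 (Kernels and traces).} For $\xi \in P\Hil$, $T\xi = 0 \iff Q\xi = 0$, so $\ker T = P\Hil \cap Q^\perp\Hil$; likewise $\ker T^* = Q\Hil \cap P^\perp\Hil$. Write $m, n$ for these dimensions, so $\ind T = m - n$. A direct computation shows $PQ^\perp P|_{P\Hil} = I - T^*T$ and $QP^\perp Q|_{Q\Hil} = I - TT^*$; and since $Q$ is a $0/1$-diagonal, the cyclic property of the trace yields
\[
\trace(PQ^\perp P) = \trace(Q^\perp P) = \sum_{d_n < 1/2} d_n = a, \qquad \trace(QP^\perp Q) = \sum_{d_n \ge 1/2}(1 - d_n) = b,
\]
so both $I - T^*T$ and $I - TT^*$ are positive and trace class.

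\emph{Step 3 (Index via trace comparison).} Standard compact-operator theory gives that $T^*T$ and $TT^*$ have the same nonzero spectrum with multiplicity, while the $0$-eigenspaces have dimensions $m$ and $n$ respectively. Translating to $I - T^*T$ and $I - TT^*$: their $1$-eigenspaces have dimensions $m$ and $n$, their eigenvalues in $(0,1)$ agree with multiplicity (coming from eigenvalues of $T^*T$/$TT^*$ in $(0,1)$), and their $0$-eigenspaces have equal dimension (coming from the $1$-eigenspaces of $T^*T$/$TT^*$, which match since $1 \ne 0$). Letting $\sigma$ denote the common sum of the $[0,1)$-eigenvalues, we get $a = m + \sigma$ and $b = n + \sigma$, whence $a - b = m - n = \ind T = [P:Q]$.

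The main obstacle is the spectral bookkeeping in Step 3: one must carefully segregate the contributions of the $0$- and $1$-eigenvalues of $T^*T$ and $TT^*$ to the traces of $I - T^*T$ and $I - TT^*$, and confirm that the mismatch is concentrated at eigenvalue $1$ (with multiplicities $m$ vs.\ $n$) while the rest cancel. This amounts to an instance of the classical Fredholm identity $\ind T = \trace(I - T^*T) - \trace(I - TT^*)$ for Fredholm $T$ with both differences trace class, which could be invoked as a black box, but the concrete form above makes the link to $a$ and $b$ transparent and handles the finite-trace case (where $\ind T = \trace P - \trace Q$) and the infinite-trace BDF case in a unified manner.
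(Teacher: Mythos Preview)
The present paper is a survey and does not supply its own proof of this statement; it is quoted from \cite{KL-2017-IEOT}. Your argument is correct and self-contained. By way of comparison with related material that \emph{is} in the paper: the route taken in the proof of \Cref{cor:kaftal-loreaux-corollary} (for a positive contraction in place of a projection) is to compute $\trace(Q^{\perp}TQ^{\perp})$ and $\trace(Q-QTQ)$ and then invoke \Cref{lem:kaftal-loreaux-restriction-lemma}(i) to deduce that $T-Q$ is Hilbert--Schmidt, whereas your Step~1 obtains $\|P-Q\|_2^2 = a+b$ directly from the diagonal of $(P-Q)^2$ without any lemma. Your Step~3 is precisely the classical trace formula $\ind T = \trace(I-T^*T)-\trace(I-TT^*)$ specialized to $T=QP|_{P\Hil}$, and the spectral bookkeeping is sound; note too that $m\le a<\infty$ and $n\le b<\infty$ fall out of your trace identities, so Fredholmness need not be assumed in advance.

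One small point of rigor in Step~2: invoking ``the cyclic property of the trace'' to get $\trace(PQ^{\perp}P)=\trace(Q^{\perp}P)$ is premature, since cyclicity requires a trace-class factor and that is exactly what you are establishing. The clean fix is to write $PQ^{\perp}P=(Q^{\perp}P)^*(Q^{\perp}P)$ and use the unconditional identity $\trace(X^*X)=\trace(XX^*)\in[0,\infty]$ to obtain $\trace(PQ^{\perp}P)=\trace(Q^{\perp}PQ^{\perp})=\sum_{d_n<1/2}d_n=a$, and similarly for $b$.
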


\Cref{thm:kadison-integer-essential-codimension} can also illuminate the role of the integer $k$ in \Cref{thm:bownik-jasper-finite-spectrum}, but the details are more technical (see \cite[Section~3]{KL-2017-IEOT}).
Moreover, there is a collection of integers in Arveson's generalization of \Cref{thm:kadison-carpenter-pythagorean} to finite spectrum normal operators which can also be explained in a similar way via the notion of essential codimension (see \Cref{sec:normal-operators} herein, especially \Cref{thm:arveson,thm:loreaux-normal-essential-codimension}, for details).

\section{General selfadjoint operators}
\label{sec:general-selfadjoint}

In this section we present both a survey and new results on this topic.

\subsection{Existing results}

In \Cref{sec:compact-selfadjoint,sec:finite-spectrum-selfadjoint} we considered two extensions of the Schur--Horn theorem to infinite dimensions, both of which were conservative in their spectral characteristics;
indeed, there were only ever finitely many points in the essential spectrum, with the interesting cases being when there were only one or two.
In this section, we explore results about diagonals of selfadjoint operators which have much weaker, or even no constraints on the spectral characteristics of the operator.
The first of these is due to Neumann in \cite[Theorem~3.13]{Neu-1999-JFA}, where he provided a generalization of the convexity aspect of the Schur--Horn theorem to diagonalizable selfadjoint operators.

\begin{theorem}[\protect{\cite[Theorem~3.13]{Neu-1999-JFA}}]
  \label{thm:neumann-diagonalizable-convexity}
  If $T = \diag(\lambda)$ is a diagonal selfadjoint operator on $B(\Hil)$ with eigenvalue sequence $\lambda$, then
  \begin{equation*}
    \closure{\D(T)}^{\norm{\cdot}_{\infty}} = \closure{\conv \{ \lambda_{\pi} \mid \pi\ \text{is a permutation} \}}^{\norm{\cdot}_{\infty}}.
  \end{equation*}
\end{theorem}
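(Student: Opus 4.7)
Since the statement asserts equality of two $\ell^\infty$-norm closures in the sequence space, I plan to verify both inclusions separately. Throughout, I identify elements of $E(B(\Hil))$ with sequences, so that $d \in \D(T)$ is written $d_i = \sangles{Tu_i, u_i}$ for some orthonormal basis $\{u_i\}$.

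For the inclusion $\closure{\conv\{\lambda_\pi\}}^{\norm{\cdot}_{\infty}} \subseteq \closure{\D(T)}^{\norm{\cdot}_{\infty}}$, I would first observe that every permutation $\lambda_\pi$ is itself a diagonal of $T$: reorder the eigenvector basis as $\{e_{\pi^{-1}(i)}\}_i$. The substantive work is showing that a finite convex combination $d = \sum_{k=1}^n t_k \lambda_{\pi_k}$ lies in $\closure{\D(T)}$. My approach is a two-step reduction: first, approximate the weights by rationals $t_k \approx m_k/N$ with a common denominator, incurring an $\ell^\infty$-error of at most $\snorm{\lambda}_{\infty} \sum_k \sabs{t_k - m_k/N}$; second, replace each $\pi_k$ by a permutation $\tilde\pi_k$ whose orbits are contained in a common partition of $\mathbb{N}$ into sufficiently large finite blocks. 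On each finite block $F$, the restriction $\sum_k t_k (\lambda_{\tilde\pi_k})|_F$ is a convex combination of permutations of $\lambda|_F$, hence by the finite-dimensional Schur--Horn theorem (\Cref{thm:schur-horn}) is realized exactly as the diagonal of $T|_{\ell^2(F)}$ by some unitary $U_F$. Gluing yields a block-diagonal unitary $U$ on $\Hil$ with $E(UTU^*) = \sum_k t_k \lambda_{\tilde\pi_k}$, which is within the prescribed $\epsilon$ of $d$.

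For $\closure{\D(T)}^{\norm{\cdot}_{\infty}} \subseteq \closure{\conv\{\lambda_\pi\}}^{\norm{\cdot}_{\infty}}$, I would start from the identity $d_i = \sum_j \sabs{U_{ij}}^2 \lambda_j$ valid for any $d \in \D(T)$ with associated unitary $U$. Setting $p_{ij} := \sabs{U_{ij}}^2$, the array $P = (p_{ij})$ is doubly stochastic on $\mathbb{N} \times \mathbb{N}$, and $d = P\lambda$. The strategy is to approximate $P$ by finite convex combinations of permutation matrices via the classical Birkhoff--von Neumann theorem. To do so, I would truncate to a principal $N \times N$ block and repair it into a bona fide finite doubly stochastic matrix $P_N$ (so that row and column sums all equal one), decompose $P_N = \sum_\ell s_\ell^{(N)} \Sigma_\ell^{(N)}$ as a finite convex combination of permutation matrices, and observe that $P_N \lambda$ is then a convex combination of permutations of $\lambda$. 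The remaining task is to verify $\snorm{P\lambda - P_N\lambda}_{\infty} \to 0$ as $N \to \infty$.

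The main obstacle is this last $\ell^\infty$ estimate. The off-diagonal tails of a doubly stochastic matrix on $\mathbb{N}$ need not decay uniformly in the row index, so a naive truncation will not converge in $\ell^\infty$ norm. I expect this is most transparent when $\lambda \in c_0$, where tail decay tames the errors automatically; for general bounded $\lambda$, a more delicate balancing step is required — perhaps iterated rescaling of row and column sums, preserving double-stochasticity while keeping $P_N\lambda$ close to $P\lambda$ coordinatewise and uniformly. A symmetric difficulty appears in the forward inclusion when the truncation $\pi_k \to \tilde\pi_k$ must be reconciled with the size of $\lambda$ on the block boundaries. Marrying these two scales — the block scale controlling permutation orbits, and the support scale controlling the behavior of $\lambda$ — is, I expect, the true technical heart of the proof, whereas the combinatorial building blocks (finite Schur--Horn and Birkhoff--von Neumann) are standard.
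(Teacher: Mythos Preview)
The paper does not contain a proof of this theorem: it is a survey, and \Cref{thm:neumann-diagonalizable-convexity} is simply quoted from \cite[Theorem~3.13]{Neu-1999-JFA} with no argument given. So there is nothing in the paper to compare your proposal against.

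That said, a brief assessment of your sketch on its own terms. Your building blocks (finite Schur--Horn for the $\supseteq$ direction, Birkhoff--von Neumann for $\subseteq$) are the natural ones, and you are right to flag the $\ell^{\infty}$ approximation as the genuine difficulty. But the step ``replace each $\pi_k$ by a permutation $\tilde\pi_k$ whose orbits lie in a common finite-block partition'' is not just technically delicate --- it can fail outright for general bounded $\lambda$. If $\pi_k$ sends $i$ to some $j$ far away and the values of $\lambda$ near $i$ do not include anything close to $\lambda_j$, no block-supported permutation will approximate $\lambda_{\pi_k}$ at coordinate $i$. Likewise, in the $\subseteq$ direction, truncating a doubly stochastic matrix and repairing row/column sums does not give $\ell^{\infty}$ convergence of $P_N\lambda$ to $P\lambda$ without further structure. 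These are not details to be filled in; they are the whole problem.

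Neumann's actual route (as the paper's remark after the theorem hints, in the compact case reducing to $\{d \pprec \lambda\}$) is to characterize \emph{both} closures intrinsically via an infinite notion of majorization and the associated doubly stochastic operators on $\ell^{\infty}$, rather than to approximate one side by the other directly. That reframing is what sidesteps the uniform-approximation obstacle you identified.
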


While this theorem is incredibly interesting as a generalization of the Schur--Horn theorem, we remark that because it takes closures, it loses much of the fine structure present in the theorems mentioned in \Cref{sec:compact-selfadjoint,sec:finite-spectrum-selfadjoint}.
For example, when applied to an infinite, coinfinite projection $P$, we can see that $\closure{\D(P)}^{\norm{\cdot}_{\infty}}$ consists precisely of those sequences with values in $[0,1]$, thereby masking the subtle integer condition present in the characterization of $\D(P)$ in \Cref{thm:kadison-carpenter-pythagorean}\ref{item:a+b-finite}.

In the case of \Cref{thm:neumann-diagonalizable-convexity} when $T$ is a positive compact operator, so that $\lambda \in c_0^+$, then the set on the right-hand side is equal to $\{ d \in c_0^+ \mid d \pprec \lambda \}$ (see \cite[Corollary~2.18]{Neu-1999-JFA}).
Contrasting this to the more nuanced (i.e., exact) \Cref{thm:kwpartialisometryorbit}, we see that the effect of taking the $\ell^{\infty}$-closure is in some sense to ignore the exact value of the trace (anything smaller will do).
Therefore, even when restricted to compact, or even trace-class, operators, \Cref{thm:neumann-diagonalizable-convexity} misses out on some fine structure in the set of diagonal sequences.
Nevertheless, this is an important result in the understanding of diagonals of selfadjoint operators because of its generality.

In addition, Neumann proved another powerful result \cite[Remark~4.5]{Neu-1999-JFA}, again using the $\ell^{\infty}$-closure, which applies also to \emph{all} selfadjoint operators.
In essence, this theorem states that a sequence $d$ is the diagonal of a selfadjoint operator $T$ if and only if the part of $d$ which lies outside the convex hull of the essential spectrum of $T$ is majorized by the part of the spectrum which lies outside the convex hull of the essential spectrum.

\begin{theorem}[\protect{\cite[Remark~4.5]{Neu-1999-JFA}}]
  \label{thm:neumann-general-schur-horn}
  Suppose that $T \in B(\Hil)$ is selfadjoint and let $\alpha_- = \min \essspec(T)$ and $\alpha_+ = \max \essspec(T)$.
  Let $T^{\pm} := (T-\alpha_{\pm})_{\pm}$, which are both positive compact operators.
  Let $d \in \ell^{\infty}$ be a real-valued sequence.
  Then $d \in \closure{\D(T)}^{\norm{\cdot}_{\infty}}$ if and only if there are sequence $d^{\pm} \in c_0^+$ and $d'$ with values in $[\alpha_-,\alpha_+]$ such that $d = d' + d^+ - d^-$ and $d^{\pm} \pprec s(T^{\pm})$.
\end{theorem}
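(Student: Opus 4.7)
The plan is to work on both directions via the orthogonal spectral decomposition $T = T' + T^+ - T^-$, where with $P_+ := \chi_{(\alpha_+,\infty)}(T)$, $P_- := \chi_{(-\infty,\alpha_-)}(T)$, and $P_{\mathrm{mid}} := I - P_+ - P_-$, I set $T' := \alpha_+ P_+ + T P_{\mathrm{mid}} + \alpha_- P_-$. Since $\spec(T) \setminus [\alpha_-,\alpha_+]$ consists of isolated eigenvalues of finite multiplicity (being disjoint from $\essspec(T)$), the operators $T^+ = (T-\alpha_+) P_+$ and $T^- = (\alpha_- - T) P_-$ are positive compact on mutually orthogonal subspaces, $\spec(T') \subseteq [\alpha_-,\alpha_+]$, and we obtain the selfadjoint operator inequalities $T - \alpha_+ \le T^+$ and $\alpha_- - T \le T^-$.

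For necessity, given $d \in \closure{\D(T)}^{\norm{\cdot}_\infty}$, I would define the decomposition canonically by $d^+_n := (d_n - \alpha_+)_+$, $d^-_n := (\alpha_- - d_n)_+$, and $d'_n := d_n - d^+_n + d^-_n \in [\alpha_-,\alpha_+]$, so that $d = d' + d^+ - d^-$ is automatic; the content is in proving $d^\pm \in c_0^+$ with $d^\pm \pprec s(T^\pm)$. Pick $d^{(k)} \in \D(T)$ with $d^{(k)}_n = \angles{T e^{(k)}_n, e^{(k)}_n}$ and $d^{(k)} \to d$ in $\ell^\infty$. The inequality $T - \alpha_+ \le T^+$ gives the pointwise bound $(d^{(k)}_n - \alpha_+)_+ \le \angles{T^+ e^{(k)}_n, e^{(k)}_n}$, and compactness of $T^+$ forces the right side to vanish as $n \to \infty$; together with uniform convergence $d^{(k)} \to d$ this yields $d^+ \in c_0^+$. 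Since the diagonal sequence of the positive compact $T^+$ is weakly majorized by $s(T^+)$ by the classical Ky Fan/Schur inequality, the pointwise bound transfers to $(d^{(k)})^+ \pprec s(T^+)$; as the Ky Fan partial sums $x \mapsto \sum_{j=1}^N x^*_j = \sup_{|F|=N} \sum_{i \in F} x_i$ are $N$-Lipschitz in $\norm{\cdot}_\infty$, weak majorization survives the $\ell^\infty$-limit, yielding $d^+ \pprec s(T^+)$. A symmetric argument, using $\alpha_- - T \le T^-$, handles $d^-$.

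For sufficiency, given $d = d' + d^+ - d^-$ with the stated properties, I would build, for each $\epsilon > 0$, an orthonormal basis $\{e_n\}$ with $\sup_n |\angles{Te_n,e_n} - d_n| < \epsilon$ by assigning basis vectors to the three orthogonal spectral subspaces: indices with $d_n > \alpha_+$ are realized inside $P_+\Hil$ via \Cref{thm:kwpartialisometryorbit} applied to $T^+$ (after a standard tail-adjustment of $d^+$ to attain sum equality with $s(T^+)$), producing diagonal entries $\angles{Tf_n,f_n} = \alpha_+ + \angles{T^+ f_n, f_n} \approx d_n$; indices with $d_n < \alpha_-$ are handled symmetrically inside $P_-\Hil$; and the remaining middle indices are realized in $P_{\mathrm{mid}}\Hil$ by invoking Neumann's \Cref{thm:neumann-diagonalizable-convexity} on a diagonalizable norm-approximation of $T|_{P_{\mathrm{mid}}\Hil}$, which is permissible because the result is stated modulo an $\ell^\infty$-closure.

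The main obstacle is reconciling the cardinality of $\{n : d_n > \alpha_+\}$ with $\dim P_+\Hil$ (and analogously at $\alpha_-$). When $\dim P_+\Hil$ exceeds the number of indices with $d_n > \alpha_+$, the surplus vectors in $P_+\Hil$ must be absorbed into the approximation of $d'$, contributing entries near $\alpha_+$, which is compatible since $\alpha_+ \in [\alpha_-,\alpha_+]$. Conversely, when $\{n : d_n > \alpha_+\}$ outstrips $\dim P_+\Hil$, the hypothesis $d^+ \pprec s(T^+)$ forces $d^+ \in \ell^1$ with tail values that must be realized using vectors in $P_{\mathrm{mid}}\Hil$ whose spectral support is concentrated near $\alpha_+ \in \essspec(T)$. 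Tracking this reallocation while preserving orthonormality across the three families of basis vectors is the delicate bookkeeping at the heart of the sufficiency proof.
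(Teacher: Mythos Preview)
The paper does not prove this theorem: it is a survey, and \Cref{thm:neumann-general-schur-horn} is simply quoted from \cite{Neu-1999-JFA} with no argument supplied. There is therefore no ``paper's own proof'' to compare your proposal against.

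On the merits of your sketch: the necessity direction is essentially complete and correct. The spectral decomposition $T = T' + T^+ - T^-$, the pointwise bound $(d^{(k)}_n - \alpha_+)_+ \le \angles{T^+ e^{(k)}_n, e^{(k)}_n}$ from $T - \alpha_+ \le T^+$, the Ky Fan inequality giving weak majorization of diagonals by singular values, and the passage to the $\ell^\infty$-limit via Lipschitz continuity of $x \mapsto \sum_{j=1}^N x^*_j$ are all sound.

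For sufficiency, your architecture is right, and you have correctly located the real work in the cardinality mismatch between $\{n : d_n > \alpha_+\}$ and $\dim P_+\Hil$. Two comments. First, to invoke \Cref{thm:neumann-diagonalizable-convexity} on the middle block you must check that any sequence $d'$ with values in $[\alpha_-,\alpha_+]$ actually lies in the $\ell^\infty$-closure of the permutation polytope of a diagonalizable approximant of $T|_{P_{\mathrm{mid}}\Hil}$; this holds because $\alpha_\pm \in \essspec(T|_{P_{\mathrm{mid}}\Hil})$, so eigenvalues cluster at both endpoints, but you should say so explicitly. Second, the ``standard tail-adjustment'' converting $d^+ \pprec s(T^+)$ into majorization is fine when $T^+$ has infinite rank, but when $T^+$ has finite rank and $d^+$ has infinitely many nonzero entries you cannot realize $d^+$ inside $P_+\Hil$ at all; those tail entries must instead be produced inside $P_{\mathrm{mid}}\Hil$ using spectral measure near $\alpha_+$, exactly as you indicate in your final paragraph. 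That reallocation---and the analogous one at $\alpha_-$---is genuinely the heart of the argument and is not yet written down, so what you have is a correct outline rather than a proof.
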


Very recently, M\"uller and Tomilov \cite{MT-2019-TAMS} recognized an important pattern in results about diagonals of operators, and they have turned this into a powerful theorem.
In particular, they noticed that often a sufficient condition for a sequence to be the diagonal of some operator is that the sequence lies ``well-inside'' the interior of the essential numerical range, in that it does not approach the boundary too rapidly.
This involves what they refer to as a Blaschke-type condition (see \Cref{thm:blaschke-muller-tomilov} below) in reference to the Blaschke product analytic function on the open unit disk, albeit in \Cref{thm:blaschke-muller-tomilov} the condition is that the sum is infinite rather than finite.
Situations in which M\"uller and Tomilov noticed the Blaschke-type condition prior to their discovery include \Cref{thm:kadison-carpenter-pythagorean}\ref{item:a+b-infinite}, \Cref{thm:jasper-loreaux-weiss-unitary}, and other examples due to Herrero \cite{Her-1991-RMJM}.
The power of their theorem is at least two-fold: it applies to the vast majority of selfadjoint operators, and it provides a sufficient condition for a sequence to lie in $\D(T)$, as opposed to the closure of this set as in Neumann's theorems mentioned above.

\begin{theorem}[\protect{\cite[Theorem~1.1]{MT-2019-TAMS}}]
  \label{thm:blaschke-muller-tomilov}
  Let $T \in B(\Hil)$ be selfadjoint and let $d = (d_k)_{k=1}^{\infty} \subseteq \Int_{\mathbb{R}} \essnr(T)$ satisfy the Blaschke condition
  \begin{equation}
    \label{eq:blaschke-condition}
    \sum_{k=1}^{\infty} \dist \{ d_k, \mathbb{R} \setminus \essnr(T) \} = \infty.
  \end{equation}
  Then $d \in \mathcal{D}(T)$.
\end{theorem}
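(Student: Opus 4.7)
For a selfadjoint $T$, the essential numerical range is the closed interval $[\alpha_-,\alpha_+]$ where $\alpha_\pm$ are the extremes of $\essspec(T)$, so the hypothesis unfolds to $d_k \in (\alpha_-,\alpha_+)$ and $\sum_k \delta_k = \infty$ with $\delta_k := \min(d_k - \alpha_-,\, \alpha_+ - d_k)$; the degenerate case $\alpha_- = \alpha_+$ makes the hypothesis vacuous. The plan is to construct an orthonormal basis $\{e_n\}$ of $\Hil$ with $\langle T e_n, e_n\rangle = d_n$ by induction, choosing $e_n$ in $M_{n-1} := \{e_1,\ldots,e_{n-1}\}^\perp$. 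The crucial fact enabling the induction is that $\essspec$ and $\essnr$ are invariant under finite-rank compression, so the full interval $[\alpha_-,\alpha_+]$ remains available at every stage.

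The technical core is a one-step perturbation lemma: for any cofinite-dimensional $M \subseteq \Hil$, any unit $u \in M$ with $c := \langle Tu, u\rangle$, and any target $d \in (\alpha_-,\alpha_+)$, there is a unit $v \in M$ with $\langle T v, v\rangle = d$ and
\[
  \|v - u\|^2 \le \frac{2\sabs{c-d}}{\delta}, \qquad \delta := \min(d-\alpha_-,\alpha_+-d).
\]
To prove it, use Weyl's criterion to find $w \in M \cap u^\perp$ with $\langle Tw, w\rangle$ arbitrarily close to $\alpha_-$ when $c > d$ (or to $\alpha_+$ when $c < d$); since any such approximate-eigenvector sequence converges weakly to $0$, we may further arrange $\sabs{\langle Tu, w\rangle}$ arbitrarily small. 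Parametrize $v = \cos\theta\, u + e^{i\varphi}\sin\theta\, w$ and pick $\varphi$ to cancel $\mathrm{Re}(e^{-i\varphi}\langle Tu, w\rangle)$; solving $c + \sin^2\theta\,(\langle Tw,w\rangle - c) = d$ yields $\sin^2\theta = (c-d)/(c - \langle Tw, w\rangle) \le \sabs{c-d}/\delta$, and $\|v-u\|^2 = 2(1-\cos\theta) \le 2\sin^2\theta$ for $\theta \in [0,\pi/2]$.

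With the lemma in hand, fix a reference orthonormal basis $\{f_k\}$ and schedule indices $k(1), k(2), \ldots$ so each $k$ is visited infinitely often. At step $n$, set $\tilde u_n := P_{M_{n-1}} f_{k(n)} / \|P_{M_{n-1}} f_{k(n)}\|$ and apply the lemma with $u = \tilde u_n$ and target $d_n$ to obtain $e_n$ with $\langle T e_n, e_n\rangle = d_n$ and $\|e_n - \tilde u_n\|^2 \le 2(\alpha_+ - \alpha_-)/\delta_n$. Writing $a_n := \|P_{M_{n-1}} f_{k(n)}\|$, we have $\langle e_n, f_{k(n)}\rangle = a_n + O(\delta_n^{-1/2})$, so the monotone residual $\|P_{M_n} f_{k(n)}\|^2 = a_n^2 - \sabs{\langle e_n, f_{k(n)}\rangle}^2$ strictly decreases whenever $a_n \gg \delta_n^{-1/2}$. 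The Blaschke divergence $\sum_n \delta_n = \infty$ is then used to argue that across the infinitely many visits to each $f_k$ there are enough steps with $\delta_n$ large relative to the current residual to drive $\|P_{M_n} f_k\| \to 0$, so $f_k \in \closure{\spans\{e_n\}}$ for every $k$ and $\{e_n\}$ is an orthonormal basis.

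The main obstacle is precisely the exhaustion bookkeeping. The per-step bound $\|e_n - \tilde u_n\| \sim \delta_n^{-1/2}$ is \emph{large} exactly when $\delta_n$ is small, so we cannot reduce the problem to summability of individual defects; no single step can be guaranteed to make progress on a given $f_k$. The proof must instead convert the divergent series $\sum_k \delta_k = \infty$ into cumulative absorption by a delicate scheduling that pairs large-$\delta_n$ steps with suitably small residuals, while also controlling cross-contamination between targets. Formalizing this through a potential-function argument on residuals, and verifying that the Blaschke hypothesis supplies exactly the right amount of flexibility, is where the real work of M\"uller--Tomilov's argument lies.
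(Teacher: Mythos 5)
The survey only \emph{states} this theorem and cites \cite[Theorem~1.1]{MT-2019-TAMS}; it does not reproduce the M\"uller--Tomilov proof. So I can only evaluate your proposal on its own merits and against what such a proof must accomplish.

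Your one-step perturbation lemma is correct as stated. The essential spectrum is invariant under compression to a cofinite-dimensional $M$, so $\alpha_\pm$ remain attainable via approximate eigenvectors $w \in M \cap u^\perp$; orthonormality of the approximate eigenvector sequence gives weak null convergence and hence $\sangles{Tu,w} \to 0$; choosing $\varphi$ to kill $\mathrm{Re}(e^{-i\varphi}\sangles{Tu,w})$ yields $\sangles{Tv,v} = c + \sin^2\theta(\sangles{Tw,w}-c)$; and $1-\cos\theta \le \sin^2\theta$ on $[0,\pi/2]$ gives the stated $\snorm{v-u}^2 \le 2\sabs{c-d}/\delta$. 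This is a sound building block.

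The gap is precisely where you flag it, and it is not merely unwritten bookkeeping --- the bound from your lemma is structurally too weak to drive the exhaustion as scheduled. The estimate $\snorm{e_n - \tilde u_n}^2 \le 2\sabs{c_n - d_n}/\delta_n$ is vacuous whenever $2\sabs{c_n-d_n} \ge 2\delta_n$, i.e.\ whenever $\sabs{c_n-d_n} \ge \delta_n$, and you have \emph{no control over $c_n = \sangles{T\tilde u_n,\tilde u_n}$}: it is determined by the projection of $f_{k(n)}$ onto $M_{n-1}$, which the construction does not steer. Even in the most favorable setting ($\essnr(T) = [0,1]$, $d_n \equiv 1/2$, so $\delta_n \equiv 1/2$ and $\sum \delta_n = \infty$ trivially), the bound gives only $\snorm{e_n - \tilde u_n}^2 \le 4\sabs{c_n - 1/2}$, which is near the trivial bound $2$ whenever $c_n$ is near $0$ or $1$. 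In that regime $\sangles{e_n, f_{k(n)}}$ may be tiny and the residual $\snorm{P_{M_n} f_{k(n)}}$ barely moves. Thus ``enough steps with $\delta_n$ large relative to the current residual'' does not exist as an automatic consequence of $\sum\delta_n = \infty$; the lemma alone cannot certify even a single step of genuine progress unless the current numerical value $c_n$ happens to sit close to the target $d_n$. A proof along these lines must therefore either (a) pick the next vector $e_n$ by a more informed rule that simultaneously controls $c_n$, the overlap with $f_{k(n)}$, and the target assignment (i.e.\ which $d_n$ gets spent on which residual), or (b) abandon one-vector-at-a-time and build $e_n$ in finite blocks where finite-dimensional Schur--Horn machinery lets one \emph{prescribe} a whole finite diagonal inside an approximately invariant finite-dimensional compression while absorbing a prescribed finite chunk of $\{f_k\}$. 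The latter is closer to how M\"uller--Tomilov actually exploit the Blaschke divergence. As written, your proposal identifies the right difficulty and a correct local tool but leaves the theorem unproved: the sentence ``formalizing this through a potential-function argument\ldots is where the real work lies'' is an acknowledgment that the argument is missing, not a proof that it exists.
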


We remark that M\"uller and Tomilov's entire paper actually applies more generally to finite tuples of operators, but we have restricted here our focus to the setting of a single operator both because it is more in line with the scope of this paper and because it would be cumbersome to define the joint essential numerical range.

\Cref{thm:blaschke-muller-tomilov} is a wonderful \emph{tour de force} for establishing that such sequences occur as diagonals and it subsumes in part several earlier results of others (e.g., \Cref{thm:kadison-carpenter-pythagorean,thm:bownik-jasper-finite-spectrum,thm:neumann-general-schur-horn}).
However, the parts that it misses are the edge cases (e.g., \Cref{thm:kadison-carpenter-pythagorean}\ref{item:a+b-finite}), and these are in general the harder results to establish.
Moreover, because this is concerned with the interior of the essential numerical range, this theorem has nothing to say about diagonals of compact operators, whose essential numerical range is zero.
In this sense, \Cref{thm:blaschke-muller-tomilov} is orthogonal to the study of diagonals of compact operators explored in \Cref{sec:compact-selfadjoint}.

\subsection{New results}

In what follows, we use the above result (\Cref{thm:blaschke-muller-tomilov}) of M\"uller and Tomilov to provide the legwork in establishing that certain sequences are diagonals of selfadjoint operators.
In conjunction, we use \Cref{lem:kaftal-loreaux-restriction-lemma} from \cite[Lemma~3.3]{KL-2017-IEOT} to place a constraint (\Cref{cor:kaftal-loreaux-corollary}) on which sequences can appear as the diagonals of certain selfadjoint operators.
Consequently, in \Cref{thm:diagonal-characterization} we characterize the diagonals of all selfadjoint operators whose numerical range is contained in the essential numerical range (this is equivalent to the minimum and maximum of the spectrum being elements of the essential spectrum), as long as there are at least three points in the essential spectrum.
This class of operators includes, for example, all selfadjoint multiplication operators on a nonatomic measure space.

We begin with the lemma from \cite{KL-2017-IEOT} which has as a consequence a constraint on the essential spectrum of a selfadjoint operator when the Blaschke condition \eqref{eq:blaschke-condition} is finite instead of infinite.

\begin{lemma}[\protect{\cite[Lemma~3.3]{KL-2017-IEOT}}]
  \label{lem:kaftal-loreaux-restriction-lemma}
  let $\mathcal{J}$ be a proper ideal, $T \in B(\Hil)_+$ a positive contraction, and $Q \in B(\Hil)$ a projection.
  \begin{enumerate}
  \item If $Q - QTQ \in \mathcal{J}$ and $Q^{\perp}TQ^{\perp} \in \mathcal{J}$, then $T - Q \in \mathcal{J}^{\nicefrac{1}{2}}$ and $T\chi_{[0,\epsilon]}(T) \in \mathcal{J}$ for every $0 < \epsilon < 1$. ($\chi_{[0,\epsilon]}(T)$ is the spectral projection onto $[0,\epsilon]$.)
  \item Assume that $T$ is a projection or that $\mathcal{J}$ is idempotent (i.e., $\mathcal{J} = \mathcal{J}^2$).
    If $T - Q \in \mathcal{J}^{\nicefrac{1}{2}}$ and $T\chi_{[0,\epsilon]}(T) \in \mathcal{J}$ for some $0 < \epsilon < 1$, then $Q - QTQ \in \mathcal{J}$ and $Q^{\perp}TQ^{\perp} \in \mathcal{J}$.
  \end{enumerate}
\end{lemma}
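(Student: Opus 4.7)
The plan is to attack the two implications separately, with the real work concentrated in part~(i); part~(ii) will follow from elementary considerations once we split on the two alternative hypotheses.

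For part~(i), I would begin with the algebraic identity
\begin{equation*}
  (T-Q)^{2} = (I-T)Q(I-T) + TQ^{\perp}T,
\end{equation*}
verified by direct expansion using $Q^{2}=Q$ and $Q+Q^{\perp}=I$, and then show each summand lies in $\mathcal{J}$. The hypothesis rewrites as $Q(I-T)Q = ((I-T)^{1/2}Q)^{*}((I-T)^{1/2}Q) \in \mathcal{J}$, placing $(I-T)^{1/2}Q$ in $\mathcal{J}^{1/2}$; multiplying by the contraction $(I-T)^{1/2}$ gives $(I-T)Q \in \mathcal{J}^{1/2}$, and then $(I-T)Q(I-T) = ((I-T)Q)((I-T)Q)^{*}$ lies in $\mathcal{J}$ because $A^{*}A$ and $AA^{*}$ generate the same ideal. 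For the second summand, $0 \le T \le I$ yields $T^{2} \le T$, so $Q^{\perp}T^{2}Q^{\perp} \le Q^{\perp}TQ^{\perp} \in \mathcal{J}$ by hereditariness, and $TQ^{\perp}T = (TQ^{\perp})(TQ^{\perp})^{*} \in \mathcal{J}$ by the same symmetry. This yields $(T-Q)^{2} \in \mathcal{J}$, so $T-Q \in \mathcal{J}^{1/2}$ (as $T-Q$ is selfadjoint). For the spectral conclusion, adding the two original hypotheses gives $(Q - QTQ) + Q^{\perp}TQ^{\perp} = Q + T - QT - TQ \in \mathcal{J}$, and the identity
\begin{equation*}
  T - T^{2} = (Q + T - QT - TQ) - (T-Q)^{2}
\end{equation*}
now places $T(I-T)$ in $\mathcal{J}$. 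Since $T$ commutes with $\chi_{[0,\epsilon]}(T)$ and is bounded by $\epsilon$ on its range, the commuting operator inequality $T(I-T) \ge (1-\epsilon)\,T\chi_{[0,\epsilon]}(T) \ge 0$ combined with hereditariness finishes part~(i).

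For part~(ii) I would split according to the two alternative hypotheses. If $T$ is a projection, then $T(I-T) = 0$ makes the spectral hypothesis automatic, and from $T - Q \in \mathcal{J}^{1/2}$ squaring yields $(T-Q)^{2} = T + Q - TQ - QT \in \mathcal{J}$; compressing by $Q$ on both sides gives $Q - QTQ \in \mathcal{J}$, and compressing by $Q^{\perp}$ gives $Q^{\perp}TQ^{\perp} \in \mathcal{J}$. If instead $\mathcal{J}$ is idempotent, then its characteristic set is closed under square roots (since $\mathcal{J}^{2} = \mathcal{J}$ together with $\sqrt{\xi\eta} \le \xi + \eta$), forcing $\mathcal{J}^{1/2} = \mathcal{J}$; setting $K := T - Q \in \mathcal{J}$, we read off $Q^{\perp}TQ^{\perp} = Q^{\perp}KQ^{\perp} \in \mathcal{J}$ and $Q - QTQ = -QKQ \in \mathcal{J}$ directly.

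The main obstacle is the passage from $(T-Q)^{2} \in \mathcal{J}$ to $T(I-T) \in \mathcal{J}$ in part~(i). A naive $2 \times 2$ block decomposition of $T$ with respect to $Q$ places the diagonal entries of $T-T^{2}$ in $\mathcal{J}$ but only controls the off-diagonal entries in the larger ideal $\mathcal{J}^{1/2}$, which is not enough. The clean identity $T - T^{2} = (Q + T - QT - TQ) - (T-Q)^{2}$ circumvents this obstacle entirely by expressing the full operator as a difference of two elements already known to lie in $\mathcal{J}$.
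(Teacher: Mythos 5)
The paper states this lemma by citation to \cite{KL-2017-IEOT} without reproducing the argument, so there is no in-paper proof to compare against; your proof must stand on its own, and it does. Both algebraic identities are verified by direct expansion (I checked $(T-Q)^{2}=(I-T)Q(I-T)+TQ^{\perp}T$ and $T-T^{2}=(Q+T-QT-TQ)-(T-Q)^{2}$), the ideal manipulations are the standard ones ($A^{*}A\in\mathcal{J}\Leftrightarrow AA^{*}\in\mathcal{J}\Leftrightarrow A\in\mathcal{J}^{1/2}$, and hereditariness $0\le A\le B\in\mathcal{J}\Rightarrow A\in\mathcal{J}$, which applies since every proper ideal sits inside the compacts), the scalar inequality $t(1-t)\ge(1-\epsilon)t\,\chi_{[0,\epsilon]}(t)$ on $[0,1]$ gives the operator inequality you need via functional calculus for every $\epsilon\in(0,1)$, and the reduction $\mathcal{J}^{2}=\mathcal{J}\Rightarrow\mathcal{J}^{1/2}=\mathcal{J}$ follows from the characteristic-set description of products together with $\sqrt{\eta\zeta}\le\tfrac12(\eta+\zeta)$, exactly as you sketch. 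One observation worth recording: in the idempotent branch of part~(ii) your argument never invokes the spectral hypothesis $T\chi_{[0,\epsilon]}(T)\in\mathcal{J}$ at all, so in that branch the hypothesis is genuinely redundant; in the projection branch it is vacuous because $T\chi_{[0,\epsilon]}(T)=0$. Either way the implication as stated is proved, and your closing remark correctly identifies the nontrivial step of part~(i): a block-matrix computation relative to $Q$ only bounds the off-diagonal corners in $\mathcal{J}^{1/2}$, and the identity $T-T^{2}=(Q+T-QT-TQ)-(T-Q)^{2}$ is precisely what sidesteps that loss.
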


While it may not be immediately obvious, \Cref{lem:kaftal-loreaux-restriction-lemma} has the following new corollary.

\begin{corollary}
  \label{cor:kaftal-loreaux-corollary}
  Suppose that $T$ is a selfadjoint operator in $B(\Hil)$ with some diagonal $d \in \D(T)$.
  Let $a = \min \essspec(T)$ and $b = \max \essspec(T)$.
  If $(T-b)_+$ and $(T-a)_-$ are trace-class and
  \begin{equation}
    \label{eq:blaschke-kaftal-loreaux}
    \sum_{k=1}^{\infty} \dist \{ d_k, \{a,b\} \} < \infty,
  \end{equation}
  then the essential spectrum $\essspec(T)$ contains at most two points.
\end{corollary}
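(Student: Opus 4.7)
The plan is to reduce, by an affine change of variable, to the case $a = 0$, $b = 1$, and then to apply \Cref{lem:kaftal-loreaux-restriction-lemma} with the trace-class ideal $\mathcal{J} = \mathcal{L}^1$ to a positive contraction obtained from $T$ by functional calculus. We may assume $a < b$, since $a = b$ gives $\essspec(T) = \{a\}$ trivially; by translating and rescaling so that $a = 0$ and $b = 1$, the Blaschke-type hypothesis \eqref{eq:blaschke-kaftal-loreaux} is preserved. Let $f(t) := \max\{0, \min\{1, t\}\}$ and set $T' := f(T)$. A direct functional-calculus computation gives $T - T' = (T-1)_+ - T_-$, which is trace-class by hypothesis, so $T'$ is a positive contraction with $\essspec(T') = \essspec(T)$ by Weyl's theorem. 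If $\{e_k\}$ is the orthonormal basis witnessing $d \in \D(T)$ and $d'_k := \angles{T' e_k, e_k}$, then $\sum_k |d_k - d'_k| \le \snorm{T - T'}_1 < \infty$, whence $\sum_k \dist\{d'_k, \{0, 1\}\} < \infty$ by the triangle inequality, and $d'_k \in [0, 1]$ for every $k$.

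I would then partition $\mathbb{N} = A \sqcup B$ with $B := \{k : d'_k \ge 1/2\}$ and let $Q := \sum_{k \in B} e_k e_k^*$. Since $0 \le T' \le I$, both $Q^\perp T' Q^\perp$ and $Q - Q T' Q$ are positive, so their traces equal the sums of their diagonal entries in the basis $\{e_k\}$: namely $\sum_{k \in A} d'_k$ and $\sum_{k \in B}(1 - d'_k)$, respectively. The key observation is that $\dist\{d'_k, \{0, 1\}\}$ equals $d'_k$ for $k \in A$ and $1 - d'_k$ for $k \in B$, so both traces are dominated by $\sum_k \dist\{d'_k, \{0, 1\}\} < \infty$. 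Hence $Q^\perp T' Q^\perp$ and $Q - QT'Q$ both lie in $\mathcal{L}^1$, and \Cref{lem:kaftal-loreaux-restriction-lemma}(i) applied to the pair $(T', Q)$ yields $T' \chi_{[0, \epsilon]}(T') \in \mathcal{L}^1$ for every $\epsilon \in (0, 1)$.

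Finally, I would conclude that $\essspec(T') \subseteq \{0, 1\}$; equivalently, $\essspec(T) \subseteq \{a, b\}$, which has at most two elements. Indeed, if some $\lambda \in \essspec(T') \cap (0, 1)$ existed, then choosing $0 < \eta < \min\{\lambda, 1 - \lambda\}$ would make $\chi_{[\lambda - \eta, \lambda + \eta]}(T')$ infinite rank, and setting $\epsilon := \lambda + \eta \in (0, 1)$ would give
\begin{equation*}
T' \chi_{[0, \epsilon]}(T') \;\ge\; (\lambda - \eta)\, \chi_{[\lambda - \eta, \lambda + \eta]}(T'),
\end{equation*}
which has infinite trace---a contradiction. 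The main technical point is recognizing that the Blaschke-type condition \eqref{eq:blaschke-kaftal-loreaux} translates precisely into the trace-class conditions on $Q^\perp T' Q^\perp$ and $Q - QT'Q$ that \Cref{lem:kaftal-loreaux-restriction-lemma} demands; once this connection is made, the remainder is a short consequence of the functional calculus.
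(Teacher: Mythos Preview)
Your proof is correct and follows essentially the same strategy as the paper: reduce to $a=0$, $b=1$, pass to the positive contraction $T' = T - (T-1)_+ + T_-$ (your $f(T)$ is exactly this), set $Q$ to be the diagonal projection determined by $\{k : d'_k \ge 1/2\}$, and invoke \Cref{lem:kaftal-loreaux-restriction-lemma}(i) with $\mathcal{J} = \mathcal{L}^1$. The only cosmetic difference is in the final step: the paper uses the conclusion $T'-Q \in \mathcal{J}^{1/2}$ (hence compact) and argues in the Calkin algebra, whereas you use the other conclusion $T'\chi_{[0,\epsilon]}(T') \in \mathcal{L}^1$ and finish with a direct spectral-projection estimate---both are immediate from the lemma.
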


\begin{proof}[Proof of corollary]
  If $a = b$, then $\essspec(T) = \{a\}$ and there is nothing to prove, so suppose $a < b$.
  
  We first reduce to the case when $a = 0$ and $b = 1$.
  In order to do this, we simply replace $T$ with $\frac{1}{b-a}(T-a)$.
  Note that because this is just a scaling and translation of $T$, the cardinality of the essential spectrum is preserved.
  Moreover, the diagonal of this new operator is the sequence $\big(\frac{d_k-a}{b-a}\big)_{k=1}^{\infty}$,
  and it is straightforward to check that the corresponding sum \eqref{eq:blaschke-kaftal-loreaux} differs from the one for $T$ by a factor of $b-a$.

  From the preceding paragraph, we may assume without loss of generality that $a = 0$ and $b = 1$.
  We next reduce to the case when $(T-1)_+$ and $T_-$ are zero.
  For this, simply replace $T$ with $T' = T - (T-1)_+ + T_-$.
  Then notice that their difference $T - T'$ is a trace-class operator, and hence $T,T'$ have the same essential spectrum.
  The former fact implies that the difference in their diagonal sequences $(d'_k - d_k)_{k=1}^{\infty}$ is absolutely summable.
  Moreover,
  \begin{equation*}
    \sum_{k=1}^{\infty} \dist \{ d'_k, \{a,b\} \} \le \sum_{k=1}^{\infty} \big(\abs{d'_k - d_k} + \dist \{ d_k, \{a,b\} \}\big) < \infty.
  \end{equation*}
  Hence, the hypotheses of the theorem are satisfied by $T'$ and this has the same essential spectrum as $T$, so if \Cref{cor:kaftal-loreaux-corollary} holds for $T'$ it also holds for $T$.

  From the preceding paragraph, we may assume without loss of generality that $(T-1)_+$ and $T_-$ are zero, so that $T$ is a positive contraction.
  This implies that $\nr(T) \subseteq \essnr(T) = [0,1]$; note that we don't necessarily have equality with $\nr(T)$ because $\nr(T)$ may not contain the endpoints $0,1$.
  In particular, this means that $(d_k)_{k=1}^{\infty} \subseteq [0,1]$.
  Therefore, the sum \eqref{eq:blaschke-condition} can be split and written as
  \begin{equation*}
    \sum_{d_k < \nicefrac{1}{2}} d_k + \sum_{d_k \ge \nicefrac{1}{2}} (1-d_k) = \sum_{k=1}^{\infty} \dist \{ d_k, \{0,1\} \} < \infty.
  \end{equation*}
  So, in particular, both sums on the left are finite.
  Let $(e_k)_{k=1}^{\infty}$ be the orthonormal basis in which $T$ has the diagonal $(d_k)_{k=1}^{\infty}$.
  Then let $Q$ denote the projection onto the closed span of $\{ e_k \mid d_k \ge \nicefrac{1}{2} \}$.
  Since $T$ is a positive contraction, $Q^{\perp}TQ^{\perp}$ and $Q-QTQ = Q(1-T)Q$ are positive operators.
  Moreover, we have
  \begin{equation*}
    \trace(Q^{\perp}TQ^{\perp}) = \sum_{d_k < \nicefrac{1}{2}} d_k, \qquad \trace(Q-QTQ) = \sum_{d_k \ge \nicefrac{1}{2}} (1-d_k).
  \end{equation*}
  Since both of these sums are finite and the operators are positive, $Q^{\perp}TQ^{\perp}$ and $Q-QTQ$ are trace-class.
  Therefore $T,Q$ satisfy the hypotheses of \Cref{lem:kaftal-loreaux-restriction-lemma}(i) when $\mathcal{J}$ is the ideal of trace-class operators, and hence $T-Q$ is Hilbert--Schmidt.
  Being compact, $T-Q$ has image zero in the Calkin algebra.
  Therefore, the image of $T = (T-Q) + Q$ is a projection in the Calkin algebra and hence $T$ has at most $\{0,1\}$ in its essential spectrum.
\end{proof}

Using \Cref{thm:blaschke-muller-tomilov} and \Cref{cor:kaftal-loreaux-corollary} we can completely characterize the diagonals of a large class of selfadjoint operators with at least three points in their essential spectrum.
This class includes all selfadjoint multiplication operators on a nonatomic measure space.
In addition, every selfadjoint operator is a compact perturbation of an operator from this class, hence the image of this class in the Calkin algebra consists of all the selfadjoint elements.

\begin{theorem}
  \label{thm:diagonal-characterization}
  Suppose that $T \in B(\Hil)$ is a selfadjoint operator with $\nr(T) \subseteq \essnr(T)$ and with at least three points in the essential spectrum.
  Then a sequence $(d_k)_{k=1}^{\infty}$ lies in $\mathcal{D}(T)$ if and only if $(d_k)_{k=1}^{\infty} \subseteq \nr(T)$ and
  \begin{equation*}
    \sum_{k=1}^{\infty} \dist \{ d_k, \mathbb{R} \setminus \essnr(T) \} = \infty,
  \end{equation*}
  and the number of occurrences of each of $\min \essspec(T), \max \essspec(T)$ in the sequence $(d_k)_{k=1}^{\infty}$ is less than or equal to the dimension of the corresponding eigenspace of $T$.
\end{theorem}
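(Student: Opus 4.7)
The plan is to prove both implications, setting $a:=\min\essspec(T)$ and $b:=\max\essspec(T)$ once and for all. The preliminary observation is that for selfadjoint $T$ one has $\essnr(T)=[a,b]$ and, since $\closure{\nr(T)}=\conv\spec(T)$ and $\nr(T)\subseteq\essnr(T)$, the closed convex hull $\conv\spec(T)\subseteq[a,b]$; hence $\spec(T)\subseteq[a,b]$ and $T-a\ge 0\ge T-b$.

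For necessity, the containment $(d_k)\subseteq\nr(T)$ is immediate from $d_k=\angles{Te_k,e_k}$. If $d_k=a$, then $\angles{(T-a)e_k,e_k}=0$ together with $T-a\ge 0$ forces $(T-a)^{1/2}e_k=0$ and therefore $e_k\in\ker(T-a)$; the symmetric argument at $b$ produces the eigenspace-multiplicity bound. For the divergence condition, $\spec(T)\subseteq[a,b]$ makes $(T-b)_+$ and $(T-a)_-$ zero (trivially trace-class), and $\dist\{d_k,\mathbb{R}\setminus\essnr(T)\}=\dist\{d_k,\{a,b\}\}$ for $d_k\in[a,b]$. Were this sum finite, \Cref{cor:kaftal-loreaux-corollary} would conclude that $\essspec(T)$ contains at most two points, contradicting the three-point hypothesis.

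For sufficiency, let $S:=\{k:d_k\in\{a,b\}\}$ with $m_a,m_b$ occurrences of $a$ and $b$ respectively, and select orthonormal families $(e_k)_{d_k=a}\subseteq\ker(T-a)$, $(e_k)_{d_k=b}\subseteq\ker(T-b)$ of cardinalities $m_a,m_b$ (possible by the multiplicity hypothesis). Let $\Hil_a,\Hil_b$ be their closed linear spans and set $\Hil':=(\Hil_a\oplus\Hil_b)^\perp$, $T':=T|_{\Hil'}$. Since each of $\Hil_a,\Hil_b$ is $T$-invariant, one obtains the orthogonal decomposition $T=aI_{\Hil_a}\oplus bI_{\Hil_b}\oplus T'$, reducing the task to realizing $(d_k)_{k\notin S}\subseteq(a,b)$ as a diagonal of $T'$. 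The crucial assertion is $\{a,b\}\subseteq\essspec(T')$, so that $\essnr(T')=[a,b]$: when $\dim\ker(T-a)<\infty$, the passage from $T$ to $T'$ is a finite-rank perturbation and preserves essential spectrum; when $\dim\ker(T-a)=\infty$ and $m_a<\infty$, the residual eigenspace $\ker(T-a)\cap\Hil'$ is still infinite-dimensional; and in the delicate case $m_a=\dim\ker(T-a)=\infty$, I would choose the orthonormal family so that its span is a proper subspace of $\ker(T-a)$ with infinite-dimensional complement, which is possible since any separable infinite-dimensional Hilbert space admits a countable orthonormal set that is not a basis (e.g.\ every other vector of a fixed basis). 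The three-point hypothesis also ensures that $\Hil'$ itself is infinite-dimensional, via the nontrivial spectral projection of $T$ associated with a small neighbourhood of a third essential spectral point. The symmetric arguments handle $b$, the Blaschke sum for $T'$ equals the (infinite) Blaschke sum for $T$ because the $k\in S$ terms vanish, and \Cref{thm:blaschke-muller-tomilov} delivers the desired orthonormal basis $(e_k)_{k\notin S}$ of $\Hil'$; joining it with $(e_k)_{k\in S}$ produces the diagonal of $T$.

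The main obstacle is exactly that delicate endpoint case: $a$ belongs to $\essspec(T)$ solely by virtue of an infinite-multiplicity eigenvalue (rather than being a spectral accumulation point), while the diagonal simultaneously uses up the full eigenspace, $m_a=\dim\ker(T-a)=\infty$. A careless ``use the whole eigenspace'' choice would eliminate $a$ from $\essspec(T')$, collapse $\essnr(T')$, and invalidate the hypothesis of \Cref{thm:blaschke-muller-tomilov}. The fix is the Hilbert-space remark invoked above, which permits retaining an infinite-dimensional residual $a$-eigenspace in $\Hil'$ and thereby keeps the essential numerical range equal to the full interval $[a,b]$ required by M\"uller--Tomilov.
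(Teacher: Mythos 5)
Your proof is correct and follows essentially the same route as the paper: same reduction to $\essnr(T)=[a,b]$ with $(T-a)_-=(T-b)_+=0$, same use of \Cref{cor:kaftal-loreaux-corollary} (via its contrapositive) for necessity of the Blaschke divergence, same elementary eigenvector argument for the multiplicity bound, and for sufficiency the same device of peeling off orthonormal eigenvectors at the endpoints — including the exact same care in the $m_a=\dim\ker(T-a)=\infty$ case, where the family must be chosen to leave an infinite-dimensional residual so that $a$ survives in $\essspec(T')$ — before applying \Cref{thm:blaschke-muller-tomilov} to the compression $T'=TP^\perp$. The only addition beyond the paper's text is your explicit remark that the third essential-spectrum point forces $\Hil'$ to be infinite-dimensional, which the paper leaves implicit (it instead observes that the infinite Blaschke sum forces $(d'_k)$ to be an infinite sequence); both observations serve the same purpose.
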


\begin{proof}
  Before we prove either direction, we establish a few facts relevant to both directions.
  When $T$ is normal it is well-known that $\closure{\nr(T)} = \conv \spec(T)$ and the closed set $\essnr(T) = \conv \essspec(T)$.
  Then, since $T$ is selfadjoint, if $a := \min\essspec(T)$ and $b := \max\essspec(T)$, then $\conv \essspec(T) = [a,b]$ and so $\closure{W(T)} = \conv \spec(T) \supseteq \conv \essspec(T) = \essnr(T) = [a,b]$. Moreover, by hypothesis $\nr(T) \subseteq \essnr(T)$, and using the previous string of inclusions we conclude $\closure{\nr(T)} = \essnr(T) = [a,b]$.
  Finally, since $\nr(T)$ is convex and hence an interval that contains $(a,b)$, we know that the difference $\essnr(T) \setminus \nr(T)$ is contained in $\{a,b\}$.
  Additionally, $(T-a)_- = (T-b)_+ = 0$ since $a,b$ are also the minimum and maximum of $\spec(T)$.

  We first prove the ``only if'' direction.
  Suppose that $(d_k)_{k=1}^{\infty} \in \mathcal{D}(T)$.
  First note that $d_k = \angles{Te_k,e_k} \in \nr(T)$.
  Next, because $T$ has at least three points in its essential spectrum by hypothesis, and $(T-a)_- = (T-b)_+ = 0$ since $\nr(T) \subseteq \essnr(T)$, we can use \Cref{cor:kaftal-loreaux-corollary} to conclude that $\sum_{k=1}^{\infty} \dist \{ d_k, \mathbb{R} \setminus \essnr(T) \} = \infty.$
  Finally, it is elementary to show that $a$ (or $b$) $\in \nr(T)$ if and only if $a$ (or $b$) is an eigenvalue of $T$ because these points are the minimum and maximum of $\spec(T)$.
  In fact, more is true; if $\angles{Te_k,e_k} = a$ (or $b$), then $e_k$ is an eigenvector for $a$ (or $b$).
  Therefore, if a diagonal sequence $(d_k)_{k=1}^{\infty}$ of $T$ takes the value $a$ exactly $m$ times (here, $m = \infty$ is allowable), then there are at least $m$ orthogonal eigenvectors for $a$, so the dimension of the eigenspace is at least $m$.
  
  We now prove the ``if'' direction.
  Suppose that $(d_k)_{k=1}^{\infty} \subseteq \nr(T)$ is a sequence with
  \begin{equation*}
    \sum_{k=1}^{\infty} \dist \{ d_k, \mathbb{R} \setminus \essnr(T) \} = \infty,
  \end{equation*}
  and the number of occurrences of each of $a,b$ in the sequence $(d_k)_{k=1}^{\infty}$ is less than or equal to the dimension of the corresponding eigenspace of $T$.
  Let the numbers (possibly $\infty$) of these occurrences be $m,n$ for $a,b$, respectively.
  Then there are orthonormal collections $\{f_k\}_{k=1}^m, \{g_k\}_{k=1}^n$ of eigenvectors for the eigenvalues $a,b$, respectively.
  Note that if $m$ (or $n$) is infinite, we can choose the vectors $f_k$ (or $g_k$) so that the complement of their span inside the eigenspace for $a$ (or $b$) is infinite dimensional.

  Let $P$ be the projection (which commutes with $T$) onto the closed span of $\{f_k\}_{k=1}^m \cup \{g_k\}_{k=1}^n$.
  Note that $P^{\perp}TP^{\perp} = TP^{\perp}$ is a selfadjoint operator with $\essnr(TP^{\perp}) = [a,b]$.
  To see $a$ is still in this set, note that if $m$ is finite, we only removed a finite rank portion of $T$ at $a$.
  Whereas, if $m$ is infinite, our choice of $\{f_k\}_{k=1}^m$ guarantees that the eigenspace of $a$ for $TP^{\perp}$ is infinite dimensional.
  Similar arguments hold for $b$.

  Now, consider the sequence $(d'_k)_{k=1}^{\infty} \subseteq (a,b) = \Int_{\mathbb{R}} \essnr(T)$ obtained by deleting all occurrences of $a,b$ from $(d_k)_{k=1}^{\infty}$.
  We know that
  \begin{equation*}
    \sum_{k=1}^{\infty} \dist \{ d'_k, \mathbb{R} \setminus \essnr(TP^{\perp}) \} = \sum_{k=1}^{\infty} \dist \{ d_k, \mathbb{R} \setminus \essnr(T) \} = \infty,
  \end{equation*}
  because the terms removed from the sequence $(d_k)_{k=1}^{\infty}$ were at distance zero from $\mathbb{R} \setminus \essnr(TP^{\perp})$.
  This ensures the sequence $(d'_k)_{k=1}^{\infty}$ is actually an infinite sequence.
  Therefore by \Cref{thm:blaschke-muller-tomilov}, $(d'_k)_{k=1}^{\infty} \in \mathcal{D}(TP^{\perp})$ (where here the operator acts on the Hilbert space $P^{\perp} \Hil$).
  Combining the orthonormal basis yielding $(d'_k)_{k=1}^{\infty}$ with the collection $\{f_k\}_{k=1}^m \cup \{g_k\}_{k=1}^n$, we obtain that $(d_k)_{k=1}^{\infty} \in \mathcal{D}(T)$.
\end{proof}

\section{Normal operators}
\label{sec:normal-operators}

Up until this point we have only discussed diagonals of selfadjoint operators.
That is primarily because the bulk of the research is focused on this case.
In this section, we explore results on diagonals of normal operators as well.

The first and most natural situation to consider is normal matrices, just as the Schur--Horn theorem (\Cref{thm:schur-horn}) investigated selfadjoint matrices.
In this regard, there are several things to be said.
Upon examination of \Cref{thm:schur-horn}, there are given two characterizations for diagonals of selfadjoint matrices: one in terms of majorization and the other in terms of permutations of the eigenvalue sequence.
Of course, the former is only defined for real sequences, so there is little hope to generalize it to the setting of normal matrices, but the latter is a set of sequences which is easily defined for normal matrices.
One might naively hope that the equivalence of \ref{item:schur-horn-diagonal} and \ref{item:schur-horn-convexity} from \Cref{thm:schur-horn} extends verbatim to the setting of normal matrices.
The following example, due to Horn \cite[pg. 625]{Hor-1954-AJM} based on an idea he attributes to A.~J.~Hoffman, shows that this is not necessarily the case when three of the eigenvalues are not collinear.

\begin{example}
  \label{ex:nonconvex-normal-diagonal}
  Suppose $N$ is a $3 \times 3$ normal matrix with eigenvalues $\lambda_1,\lambda_2,\lambda_3$ which are not collinear.
  Then the sequence with values $d_1 = \frac{\lambda_2 + \lambda_3}{2}, d_2 = \frac{\lambda_1 + \lambda_3}{2}, d_3 = \frac{\lambda_1 + \lambda_2}{2}$ is not a diagonal of $N$.
  To see this, notice that $d$ is a diagonal of $N$ if and only if there is unitary matrix $U$ such that $U(\diag \lambda)U^{*}$ has diagonal $d$.
  Upon computation of the diagonal entries of $U(\diag \lambda)U^{*}$ in terms of the entries of $U$,
  \begin{equation*}
    d_i = \abs{u_{i1}}^2 \lambda_1 + \abs{u_{i2}}^2 \lambda_2 + \abs{u_{i3}}^2 \lambda_3,
  \end{equation*}
  which is a convex combination of $\lambda_1,\lambda_2,\lambda_3$ since the rows of $U$ have norm $1$.
  Since there is a unique way to write each entry $d_i$ as a convex combination $\lambda_1,\lambda_2,\lambda_3$, it is clear that  the square of the moduli of the entries of $U$ must have the form
  \begin{equation*}
    \frac{1}{2}
    \begin{pmatrix}
      0 & 1 & 1 \\
      1 & 0 & 1 \\
      1 & 1 & 0 \\
    \end{pmatrix}
  \end{equation*}
  Then we observe that it is impossible for $U$ to have orthonormal rows/columns, and therefore $U$ cannot actually be unitary.
  Therefore $d$ is not a diagonal of $N$.
\end{example}

In 1973, J.P.~Williams \cite{Wil-1971-JLMS2} completely characterized the diagonals of $3 \times 3$ normal matrices and the description given is entirely geometric.
Below, we restate Williams theorem using the terminology of geometry because it becomes quite concise.
In comparison, the description given by Williams is rather cumbersome.
But first we illustrate with a picture the general case when a diagonal entry does not lie on the boundary of the numerical range described in \Cref{thm:3x3-normal}\ref{item:williams-generic}.

\begin{figure}[h!]
  \centering
  \includegraphics[width=\textwidth]{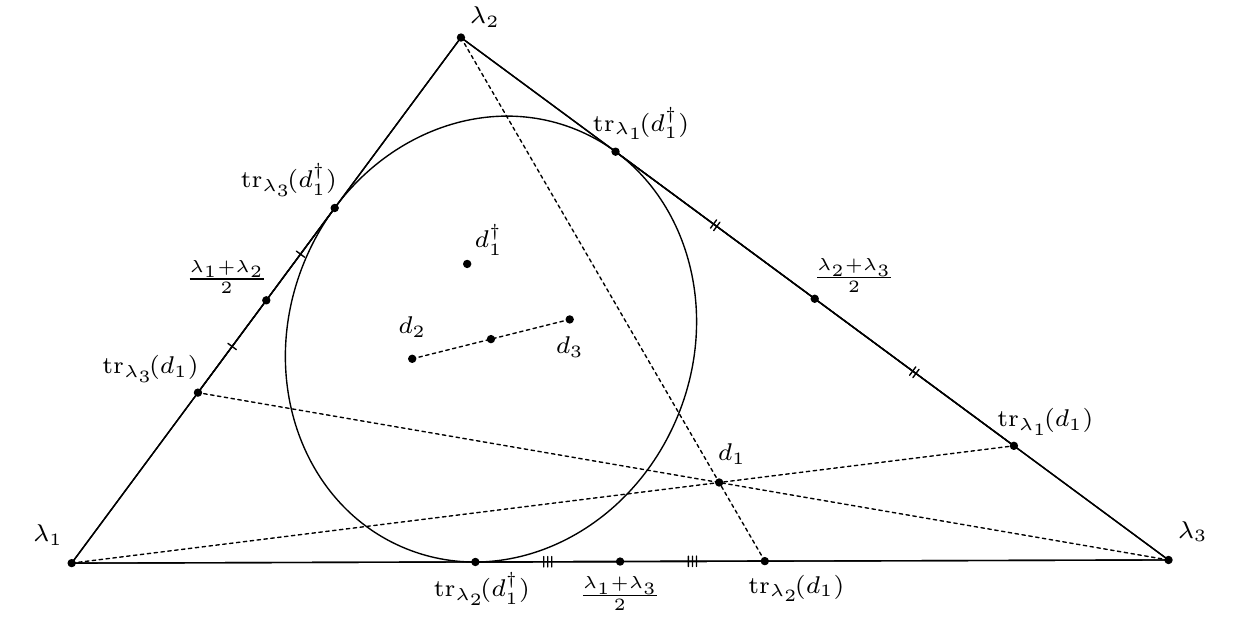}
  \caption{The ellipse described in \Cref{thm:3x3-normal}.
    The point $d_1^{\dag}$ is the isotomic conjugate of $d_1$, and for each $1 \le i \le 3$, $\trace_{\lambda_i}(d_1)$ denotes the trace of $d_1$ on the edge opposite the vertex $\lambda_i$.
    Likewise for $\trace_{\lambda_i}(d_1^{\dag})$.}
  \label{fig:williams-ellipse}
\end{figure}

\begin{theorem}[\protect{\cite[Theorem~2]{Wil-1971-JLMS2}}]\label{thm:3x3-normal}
  Let $N$ be a $3 \times 3$ normal matrix with noncollinear eigenvalues $\lambda_1, \lambda_2, \lambda_3$, so that $W(N) = \conv \{ \lambda_1, \lambda_2, \lambda_3 \}$ is the triangle whose vertices are the eigenvalues.
  Then $(d_1,d_2,d_3) \in \D(N)$ if and only if any of the following mutually exclusive conditions holds:
  \begin{enumerate}
  \item $d_1 = \lambda_i$ and $d_2,d_3$ lie on the edge opposite $\lambda_i$ and they are symmetric about its midpoint.
  \item $d_1 \in \partial W(N) \setminus \{\lambda_1,\lambda_2,\lambda_3\}$ and if $d'_1$ denotes the point which is symmetric to $d_1$ relative to the midpoint of the edge containing $d_1$, then $d_2, d_3$ lie on the line segment joining $d_1$ to $d'_1$ and they are symmetric about its midpoint.
  \item \label{item:williams-generic} $d_1 \in \Int(W(N))$ and $d_2, d_3$ lie in the ellipse inscribed in $W(N)$ which is tangent to $W(N)$ at the traces of isotomic conjugate $d_1^{\dag}$ of $d_1$, and moreover $d_2, d_3$ are symmetric relative to the center of this ellipse (see \Cref{fig:williams-ellipse} for a diagram).
  \end{enumerate}
\end{theorem}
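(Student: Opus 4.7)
The plan is to reduce the problem to a question about \emph{orthostochastic matrices} — those doubly stochastic matrices that arise as $(|v_{ij}|^2)$ for some unitary $V$ — and then translate the resulting algebraic constraints into the stated geometric picture. First I would use unitary invariance of $\D(N)$ to assume $N = \diag(\lambda_1,\lambda_2,\lambda_3)$. Then $d \in \D(N)$ iff there is a unitary $V = (v_{ij})$ with $d_i = \sum_j |v_{ji}|^2 \lambda_j$, so setting $p_{ij} = |v_{ji}|^2$ produces a doubly stochastic $P$ with $d_i = \sum_j p_{ij}\lambda_j$. Since $\lambda_1,\lambda_2,\lambda_3$ are noncollinear (and so affinely independent), each such affine representation of $d_i$ with coefficients summing to $1$ is unique; hence the row $(p_{i1},p_{i2},p_{i3})$ is the barycentric coordinate vector of $d_i$ with respect to the triangle $\nr(N)$, and $P$ is determined by $d$. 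Column-stochasticity of $P$ forces $d_1+d_2+d_3 = \lambda_1+\lambda_2+\lambda_3$, so once $d_1$ is fixed, the midpoint of the segment $[d_2,d_3]$ is pinned down (it is the reflection of $d_1$ through the centroid of $\nr(N)$), which immediately yields the symmetry assertions for $d_2,d_3$ in all three cases.

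What remains is to characterize precisely which doubly stochastic $P$ are orthostochastic, and then translate this into a geometric condition on $d_2,d_3$. The key classical ingredient in the $3\times 3$ case is the following triangle inequality characterization: $P$ is orthostochastic iff for every pair of distinct rows $i\ne k$, the three nonnegative numbers $\sqrt{p_{i1}p_{k1}}$, $\sqrt{p_{i2}p_{k2}}$, $\sqrt{p_{i3}p_{k3}}$ can serve as the sides of a (possibly degenerate) triangle. Necessity comes from $0 = \angles{v_i,v_k} = \sum_\ell v_{i\ell}\overline{v_{k\ell}}$, which forces the moduli $|v_{i\ell}\overline{v_{k\ell}}|$ to obey the triangle inequalities; sufficiency is a direct construction adjusting phases of the $v_{i\ell}$.

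Finally, I would fix $d_1 \in \Int(\nr(N))$ and parameterize $d_2$, with $d_3$ determined by the symmetry above. Substituting the barycentric coordinates of $d_2$ into the triangle inequalities from the preceding paragraph (for the row pair $(1,2)$; the others are redundant given the imposed symmetry) yields three quadratic inequalities cutting out a closed region whose boundary is a conic, and a direct computation in barycentric coordinates identifies this conic as an inscribed ellipse of $\nr(N)$. Equality in those inequalities should correspond precisely to the three traces of the isotomic conjugate $d_1^{\dag}$ on the opposite edges, which is the content of case (iii). The degenerate cases (i) and (ii) then arise by letting one or two of the $p_{1j}$ vanish so that $d_1$ sits on an edge or at a vertex; the inscribed ellipse collapses to a line segment and the triangle inequalities reduce to the stated linear constraints. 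The main obstacle is this last geometric translation: identifying the correct inconic of $\nr(N)$ and matching its three tangency points to the isotomic conjugate traces is a classical but delicate piece of triangle geometry that demands careful bookkeeping of barycentric coordinates.
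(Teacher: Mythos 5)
Your route through unistochastic matrices and the $3\times 3$ chain (triangle-inequality) criterion is genuinely different from Williams'. As the paper notes right after the theorem, Williams fixes $d_1 = \angles{Ne_1,e_1}$ and applies the complete $2\times 2$ diagonal characterization to the compression of $N$ to $e_1^\perp$: a diagonal of a $2\times 2$ matrix $T$ is any pair $(d,\trace T - d)$ with $d\in\nr(T)$, and $\nr(T)$ is an ellipse with the eigenvalues of $T$ as foci, so the ellipse in (iii) appears directly as a numerical range. Your approach bypasses compressions entirely and reduces to the known criterion for $3\times 3$ bistochastic matrices to be unistochastic; both roads do lead to the same conic. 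Indeed, squaring the boundary equality $\sqrt{p_{11}p_{21}} = \sqrt{p_{12}p_{22}} + \sqrt{p_{13}p_{23}}$ twice produces the inconic $p^2u^2 + q^2v^2 + r^2w^2 - 2pq\,uv - 2qr\,vw - 2rp\,wu = 0$ in the barycentric coordinates $(u,v,w)$ of $d_2$, where $(p,q,r)$ are those of $d_1$; this inconic touches the side $u=0$ at $(0:r:q)$, which is the cevian trace of the isotomic conjugate $d_1^{\dag}=(1/p:1/q:1/r)$, and its center is $(q+r : r+p : p+q)$, i.e., the point $\tfrac12(\trace N-d_1)$.

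Two concrete issues remain. First, a slip: from $d_1+d_2+d_3 = \trace N$ the midpoint of $[d_2,d_3]$ is $\tfrac12(\trace N - d_1)$, which is \emph{not} the reflection of $d_1$ through the centroid $G$ (that would be $2G - d_1$; the two agree only when $d_1 = G$). The symmetry assertion survives because this midpoint happens to coincide with the center of the inconic above, not for the reason you give. Second — and this is the substantial gap — the geometric translation you defer \emph{is} the theorem: showing that the chain-inequality region is bounded by the inconic tangent at the traces of $d_1^{\dag}$, and that cases (i)–(ii) are its degenerations, requires the barycentric computation sketched above and cannot simply be asserted. Relatedly, the claim that the remaining row pairs are ``redundant given the imposed symmetry'' mislocates the reason: the $d_2\leftrightarrow d_3$ symmetry relates pairs $(1,2)$ and $(1,3)$ but says nothing a priori about pair $(2,3)$. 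What you actually need is the theorem that for a $3\times 3$ doubly stochastic matrix the chain condition for one pair of rows is equivalent to unistochasticity (hence to the condition for all pairs); that result should be stated and cited rather than attributed to the symmetry of $d_2,d_3$.
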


Note that the case when the eigenvalues of $N$ are collinear is actually addressed by \Cref{thm:schur-horn}.
Indeed, if the eigenvalues of $N$ are collinear then there is a selfadjoint matrix $T$ and constant $a,b$ so that $N = aT + b$ and hence $\D(N) = \D(aT + b) = a\D(T) + b$.

One might hope to generalize Williams' result to matrices which are larger than $3 \times 3$.
However, it seems that at this stage there is little hope for progress in this area.
Williams proof of \Cref{thm:3x3-normal} depends in an essential way on the fact that the diagonals of $2 \times 2$ matrices, not just the normal ones, are completely characterized.
Indeed, a diagonal of a $2 \times 2$ matrix $T$ is necessarily of the form $(d, \trace T - d)$ for any $d \in \nr(T)$, and it is well-known that $\nr(T)$ is an ellipse whose foci are the eigenvalues of $T$.
In contrast, there is no such analogous characterization for the diagonal of an arbitrary $3 \times 3$ matrix, which makes the $4 \times 4$ normal case intractable using his approach.

Although necessary and sufficient conditions for a sequence to be a diagonal of a normal operator seem out of reach in general, in \cite{Arv-2007-PNASU} Arveson did manage to determine a necessary condition on diagonals of certain finite spectrum normal operators.
Arveson discovered this condition as a generalization of Kadison's \Cref{thm:kadison-carpenter-pythagorean}.

\begin{theorem}[\protect{\cite[Theorem~4]{Arv-2007-PNASU}}]
  \label{thm:arveson}
  Let $X = \{\lambda_1,\ldots,\lambda_m\}$ be the set of vertices of a convex polygon $P \subseteq \mathbb{C}$ and let $d = (d_1,d_2,\ldots)$ be a sequence of complex numbers satisfying $d_n \in P$ for $n \ge 1$, together with the summability condition
  \begin{equation}
    \label{eq:summability-condition}
    \sum_{n=1}^{\infty} \dist(d_n, X) < \infty.
  \end{equation}
  Then $d$ is the diagonal of a normal operator $N$ with spectrum $\spec(N) = \essspec(N) = X$ if and only if for any $x_n \in X$ such that $\sum_{n=1}^{\infty} \abs{d_n - x_n} < \infty$ there are integers $c_1,\ldots,c_m$ whose sum is zero for which
  \begin{equation*}
    \sum_{n=1}^{\infty} (d_n - x_n) = \sum_{n=1}^m c_n \lambda_n.
  \end{equation*}
\end{theorem}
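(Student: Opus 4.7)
My plan is to prove the two directions separately, viewing the statement as a generalization of \Cref{thm:kadison-carpenter-pythagorean}\ref{item:a+b-finite} (and its essential-codimension interpretation in \Cref{thm:kadison-integer-essential-codimension}) from the two-vertex case to vertices of an arbitrary convex polygon. The integers $c_1,\ldots,c_m$ should be interpreted as essential codimensions (\Cref{def:essential-codimension}) of spectral projections of $N$ against certain diagonal projections determined by the approximating sequence $(x_n)$.

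For the ``only if'' direction, fix an orthonormal basis $\{e_n\}$ in which $N$ has diagonal $d$, write $N = \sum_i \lambda_i P_i$ with $P_i := \chi_{\{\lambda_i\}}(N)$ (each of infinite rank since $\spec(N) = \essspec(N) = X$), and form $M := \diag(x_n) = \sum_i \lambda_i Q_i$ where $Q_i$ is the projection onto $\overline{\spans}\{e_n : x_n = \lambda_i\}$. The key step is to establish that each $P_i - Q_i$ is Hilbert--Schmidt; I would do this by exploiting convexity. Since $x_n = \lambda_j$ is a vertex of $P$ and $d_n = \sum_i \lambda_i (P_i)_{nn} \in P$ is a convex combination, a separating linear functional at $\lambda_j$ produces a geometric constant $\kappa > 0$ with
\begin{equation*}
  \sum_{i : \lambda_i \neq x_n}(P_i)_{nn} \le \kappa\, |d_n - x_n|.
\end{equation*}
Summing over $n$ yields $\sum_{i \neq j}\|P_i Q_j\|_2^2 = \sum_{i \neq j}\trace(P_i Q_j) < \infty$, and the identity $P_i - Q_i = \sum_{j \neq i}(P_i Q_j - P_j Q_i)$ exhibits each $P_i - Q_i$ as Hilbert--Schmidt. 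Setting $c_i := [P_i : Q_i]$, additivity of essential codimension against $\sum P_i = I = \sum Q_i$ yields $\sum c_i = 0$. Finally, expanding $N - M = \sum_{i \neq j}(\lambda_i - \lambda_j) P_i Q_j$, taking diagonal sums (justified by $\sum_{i \neq j}\trace(P_i Q_j) < \infty$), and applying the standard Fredholm-index identity $c_i = \sum_{j \neq i}[\trace(P_i Q_j) - \trace(P_j Q_i)]$ (valid for Hilbert--Schmidt $P_i - Q_i$, since it is a rewriting of $[P:Q] = \|(I-Q)P\|_2^2 - \|(I-P)Q\|_2^2$) yields by algebraic rearrangement $\sum (d_n - x_n) = \sum_{i \neq j}(\lambda_i - \lambda_j)\trace(P_i Q_j) = \sum_i \lambda_i c_i$.

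For the ``if'' direction, given the integer data $(c_i)$ satisfying the hypotheses, I would construct $N$ by first adjusting the diagonal operator $M = \diag(x_n)$: exchanging finitely many basis vectors between the eigenspaces $Q_i \Hil$ (using $\sum c_i = 0$) gives modified projections whose essential codimensions against target spectral projections vanish. Then one assembles $N = \sum_i \lambda_i P_i$ with $P_i$ having prescribed diagonal sequences $(w_n^i)$ satisfying $\sum_i w_n^i = 1$ and $\sum_i \lambda_i w_n^i = d_n$; each such sequence is a valid projection diagonal per \Cref{thm:kadison-carpenter-pythagorean}, and the integer identity $\sum (d_n - x_n) = \sum c_i \lambda_i$ is precisely the obstruction-vanishing condition allowing the multiple Kadison-type constraints to be simultaneously satisfiable.

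The main obstacle is the Hilbert--Schmidt step in the necessity direction. Since $N - M$ is not in general compact (the off-diagonal entries of $N$ in the basis $\{e_n\}$ need not decay), compactness of $P_i - Q_i$ cannot be derived from a norm estimate on $N - M$ alone. The \emph{convexity} of $P$ together with $X$ being the \emph{set of vertices} of $P$ is essential: the geometric inequality above relies on $x_n$ being an extreme point of $P$, and this rigidity is precisely what upgrades the $\ell^1$-summability of $|d_n - x_n|$ to summability of the ``mismatch quantities'' $\trace(P_i Q_j)$ for $i \neq j$. With the Hilbert--Schmidt property in hand, both the index-additivity and the trace identity reduce to standard Fredholm-index bookkeeping.
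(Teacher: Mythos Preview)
The paper does not contain a proof of \Cref{thm:arveson}; this is a survey, and the theorem is simply quoted from Arveson's original paper \cite{Arv-2007-PNASU}. So there is no ``paper's own proof'' to compare against. That said, your necessity argument is essentially the essential-codimension reinterpretation that the paper records (without proof) as \Cref{thm:loreaux-normal-essential-codimension}: your integers $c_i = [P_i:Q_i]$ are exactly the essential codimensions appearing there, and your Hilbert--Schmidt step (using that each $x_n$ is a \emph{vertex} of $P$ to control $\sum_{i\neq j}\trace(P_iQ_j)$ by $\sum_n|d_n-x_n|$) is the right mechanism and is correctly identified as the crux. The trace bookkeeping you outline is sound.

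Your sufficiency sketch, however, has a real gap. You propose to ``assemble $N = \sum_i \lambda_i P_i$ with $P_i$ having prescribed diagonal sequences $(w_n^i)$'' and to produce each $P_i$ via \Cref{thm:kadison-carpenter-pythagorean}. But Kadison's theorem constructs a \emph{single} projection with a prescribed diagonal; it gives no control over how two such projections sit relative to each other. What you need are $m$ \emph{mutually orthogonal} projections summing to $I$ with simultaneously prescribed diagonals $(w_n^1),\ldots,(w_n^m)$, and that is a genuinely stronger statement than $m$ separate applications of \Cref{thm:kadison-carpenter-pythagorean}. The integer obstruction $\sum_n(d_n-x_n)=\sum_i c_i\lambda_i$ is necessary for this simultaneous realizability, but you have not argued that it is sufficient; the ``obstruction-vanishing'' language conceals the missing construction. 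One route is to reduce inductively on $m$ (peeling off one vertex at a time and invoking a suitable two-projection result at each stage), but this requires care and is not the one-line invocation your sketch suggests.
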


The above \Cref{thm:arveson} contains \Cref{thm:kadison-carpenter-pythagorean}(ii) as the special case $X = \{0,1\}$.
Moreover, just like \autoref{thm:kadison-carpenter-pythagorean} could be expressed in terms of essential codimension in \Cref{thm:kadison-integer-essential-codimension}, the first author in \cite{Lor-2019-JOT} showed it is possible to do the same for Arveson's theorem.
What follows is a slight generalization and reinterpretation of Arveson's theorem in operator theoretic language.

\begin{theorem}[\protect{\cite[Theorem~4.3]{Lor-2019-JOT}}]
  \label{thm:loreaux-normal-essential-codimension}
  Let $N$ be a normal operator with finite spectrum.
  If $N$ is diagonalizable by a unitary which is a Hilbert--Schmidt perturbation of the identity,
  then there is a diagonal operator $N'$ with $\spec(N') \subseteq \spec(N)$ for which $E(N-N')$ is trace-class.
  Moreover, for any such $N'$,
  \begin{equation}
    \label{eq:trace-in-K-spec-N}
    \trace\big(E(N-N')\big) = \sum_{\lambda \in \spec(N)} [P_{\lambda}:Q_{\lambda}] \lambda,
  \end{equation}
  where $P_{\lambda},Q_{\lambda}$ are the spectral projections onto $\{\lambda\}$ of $N,N'$ respectively.
  Moreover, $P_{\lambda}-Q_{\lambda}$ is Hilbert--Schmidt for each $\lambda \in \spec(N)$.
\end{theorem}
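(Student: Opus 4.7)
My plan is to exhibit one concrete $N'$ for which the formula can be verified explicitly, and then bootstrap to every admissible $N'$ using the rigidity forced by the finite spectrum. By hypothesis there is a unitary $U$ with $V := U - I$ Hilbert--Schmidt such that $D := U^{*}NU$ is diagonal; since $U$ is unitary, $\spec(D) = \spec(N)$. Unitarity of $U$ gives the algebraic identities $V + V^{*} + V^{*}V = 0 = V + V^{*} + VV^{*}$, so on the diagonal $(V^{*}V)_{nn} = (VV^{*})_{nn} = -2\Re V_{nn} = \|Ve_n\|^{2}$.

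For the base case $N' = D$, expanding $N - D = (I+V)D(I+V^{*}) - D = VD + DV^{*} + VDV^{*}$ gives $E(N-D)_{nn} = 2d_n \Re V_{nn} + \sum_k \abs{V_{nk}}^{2} d_k$. With $M := \max_{\lambda \in \spec(N)}\abs{\lambda}$, the diagonal identity above yields $\abs{E(N-D)_{nn}} \le 2M\|Ve_n\|^{2}$, which sums to at most $2M\|V\|_{\text{HS}}^{2}$, so $E(N-D)$ is trace class. Interchanging the order of summation and using $\sum_n \abs{V_{nk}}^{2} = (V^{*}V)_{kk} = -2\Re V_{kk}$ produces the crucial cancellation
\[ \trace E(N-D) = \sum_n 2d_n \Re V_{nn} - \sum_k 2d_k \Re V_{kk} = 0. \]
This settles existence with $N' = D$ and records the base value of the formula.

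For an arbitrary diagonal $N'$ with $\spec(N') \subseteq \spec(N)$ and $E(N-N')$ trace class, note that $D - N' = E(D - N') = E(N-N') - E(N-D)$ is a trace class diagonal operator whose entries $d_n - d'_n$ lie in $\spec(N) - \spec(N)$. Each nonzero entry has modulus at least $\delta := \min\{\abs{\lambda-\mu} : \lambda \ne \mu \in \spec(N)\} > 0$, so summability forces $d_n = d'_n$ for all but finitely many $n$. Writing $E_\lambda, Q_\lambda$ for the spectral projections of $D, N'$ onto $\{\lambda\}$, we conclude $E_\lambda - Q_\lambda$ is finite rank for each $\lambda \in \spec(N)$. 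Combined with $P_\lambda - E_\lambda = VE_\lambda + E_\lambda V^{*} + VE_\lambda V^{*}$ being Hilbert--Schmidt, this yields $P_\lambda - Q_\lambda$ Hilbert--Schmidt.

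For the trace formula, decompose $E(N-N') = E(N-D) + (D-N')$ and invoke the base case:
\[ \trace E(N-N') = 0 + \sum_{\lambda \in \spec(N)} \lambda \trace(E_\lambda - Q_\lambda) = \sum_\lambda \lambda [E_\lambda : Q_\lambda], \]
since the essential codimension of two projections with trace class difference equals the trace of the difference. To replace $E_\lambda$ by $P_\lambda$, observe that the Fredholm operator $E_\lambda P_\lambda|_{P_\lambda \Hil}$ is unitarily conjugate via $U|_{E_\lambda \Hil}$ to $I + E_\lambda V|_{E_\lambda \Hil}$, a compact perturbation of the identity on $E_\lambda \Hil$, so $[P_\lambda : E_\lambda] = 0$. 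Cocycle additivity of essential codimension then gives $[E_\lambda : Q_\lambda] = [P_\lambda : Q_\lambda]$, completing the proof. The main obstacle is the vanishing $\trace E(N-D) = 0$: the cancellation rests entirely on the unitarity identity $V + V^{*} = -V^{*}V$ applied along the diagonal, and without this algebraic observation one recovers only trace class bounds, not the exact value.
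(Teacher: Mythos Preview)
The survey does not include a proof of this theorem; it is quoted from \cite{Lor-2019-JOT} without argument, so there is no in-paper proof to compare against.

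Your argument is correct and self-contained. The base case $N' = D := U^{*}NU$ with the explicit vanishing $\trace E(N-D) = 0$, obtained from the unitarity identity $V + V^{*} + V^{*}V = 0$ on the diagonal, is the natural anchor, and the bootstrap to a general admissible $N'$ through the finite-rank difference $D - N'$ together with additivity of essential codimension is the expected route. One minor wording issue: what you call ``unitarily conjugate'' is really precomposition with the unitary $U|_{E_\lambda\Hil}\colon E_\lambda\Hil \to P_\lambda\Hil$, yielding
\[
E_\lambda P_\lambda U\big|_{E_\lambda\Hil} = E_\lambda U E_\lambda\big|_{E_\lambda\Hil} = \big(I + E_\lambda V E_\lambda\big)\big|_{E_\lambda\Hil},
\]
and additivity of the Fredholm index under composition then gives $[P_\lambda:E_\lambda] = 0$ as claimed. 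The two external facts you invoke---that $\trace(P-Q) = [P:Q]$ for projections with trace-class difference, and the cocycle identity $[P:R] = [P:Q] + [Q:R]$ when all pairwise differences are compact---are standard properties of essential codimension.
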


Even though the problem of characterizing diagonals of a \emph{specific} normal operator seems to be intractable at this time, there has been progress determining the diagonals of all normal operators in certain classes.
For example, Horn originally proved \Cref{thm:schur-horn} primarily as a stepping stone to get as his main results \cite[Theorems~8--11]{Hor-1954-AJM} which are characterizations of those finite sequences which are diagonals of some rotation or some unitary matrix.

\begin{theorem}[\protect{\cite{Hor-1954-AJM}}]
  \label{thm:horn-rotation}
  For a sequence $d \in \mathbb{R}^n$, the following are equivalent.
  \begin{enumerate}
  \item $d$ is the diagonal of a rotation matrix, i.e., an orthogonal matrix with determinant 1.
  \item $d \in \conv \{ \lambda \in \{-1,1\}^n \mid \lambda_1 \cdots \lambda_n = 1 \}.$
  \end{enumerate}
  If, in addition, $d \ge 0$, then these are also equivalent to
  \begin{enumerate}[resume]
  \item $d \in \conv \{ \lambda \in \{0,1\}^n \mid \lambda_1 \cdots \lambda_n = 0 \}.$
  \item \label{item:rotation-thompson} $d \in [0,1]^n$ and $2 \left(1 - \min_{1 \le i \le n} d_i \right) \le \sum_{i=1}^n (1 - d_i).$
  \end{enumerate}
\end{theorem}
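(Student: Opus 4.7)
The approach is to establish (i) $\Leftrightarrow$ (ii) by applying the Schur--Horn theorem (\Cref{thm:schur-horn}) to the symmetric part of a rotation, and then to deduce the further equivalences with (iii) and (iv) in the case $d \ge 0$ from (ii) by convex-geometric reasoning.

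For (i) $\Rightarrow$ (ii), let $R \in SO(n)$ have diagonal $d$ and set $S := \tfrac{1}{2}(R + R^T)$, which is symmetric with the same diagonal. The eigenvalues of $R$ come in complex conjugate pairs $e^{\pm i\theta_k}$ together with real eigenvalues $\pm 1$, and $\det R = 1$ forces the count of $-1$ eigenvalues to be even. Consequently the spectrum of $S$ consists of repeated values $(\cos\theta_k, \cos\theta_k)$ together with $\pm 1$'s whose $-1$ count is even. The identity
\[
(\cos\theta_k, \cos\theta_k) = \tfrac{1+\cos\theta_k}{2}(1,1) + \tfrac{1-\cos\theta_k}{2}(-1,-1),
\]
combined block-by-block, expresses $\spec(S)$ as a convex combination of $\pm 1$ sequences each having an even number of $-1$'s, i.e., with entry-product $1$. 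The Schur--Horn theorem then places $d$ in the convex hull of permutations of $\spec(S)$, and since the extreme-point set in (ii) is permutation-invariant (permutation preserves the product constraint), we obtain $d \in \conv\{\lambda \in \{-1,1\}^n \mid \lambda_1\cdots\lambda_n = 1\}$.

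For (ii) $\Rightarrow$ (i), each extreme point $\lambda$ is realized as the diagonal of $\diag\lambda \in SO(n)$, so it suffices to prove that $\{\diag R : R \in SO(n)\}$ is convex. The tool is Givens conjugation: a planar rotation $G_{ij}(\theta)$ conjugates $R$ to another element of $SO(n)$ whose diagonal agrees with that of $R$ outside positions $i, j$, while at those positions the pair moves continuously in $\theta$ along an arc preserving the sum $R_{ii} + R_{jj}$. This is the $SO(n)$ analogue of the T-transform in the classical Schur--Horn proof; iterating it produces, by induction on $n$ and on the number of coordinates not yet matched, a rotation achieving any prescribed convex combination of extreme-point diagonals.

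For the further equivalences when $d \ge 0$, both (iii) and (iv) describe the intersection of the polytope in (ii) with the positive orthant: (iii) as its vertex description and (iv) as a facet description. Verification reduces to checking the facet inequality in (iv) against each candidate vertex in (iii) and identifying the facets of the intersection polytope cut out by the sign-pattern constraints of (ii) together with $d_i \ge 0$. The hardest step of the whole proof is the convexity of $\{\diag R : R \in SO(n)\}$: although the Givens--T-transform mechanism is standard in spirit, assembling continuous one-parameter modifications into a single clean interpolation between the diagonals of two arbitrary rotations, uniformly in $n$, requires careful bookkeeping and is the technical heart of the argument.
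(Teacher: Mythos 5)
The paper does not prove this theorem; it is quoted directly from Horn's 1954 paper \cite{Hor-1954-AJM}, so there is no internal argument to compare against. Evaluating your proposal on its own terms, the forward implication (i)~$\Rightarrow$~(ii) is correct and clean: the symmetric part $S=\tfrac12(R+R^T)$ has the same diagonal as $R$ and spectrum $\{\cos\theta_k\}$ with the right pairing and parity, the block-by-block decomposition writes $\spec(S)$ as a convex combination of $\pm1$-vectors with an even number of $-1$'s, and permutation-invariance of that extreme-point set combined with the Schur--Horn theorem finishes the argument.

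The reverse implication (ii)~$\Rightarrow$~(i), however, has a genuine gap beyond ``bookkeeping.'' You reduce to showing that $\{\diag R : R \in SO(n)\}$ is convex and then invoke Givens conjugation as an analogue of the T-transform, but this mechanism does not interpolate in the way you describe. Conjugating $R$ by $G_{ij}(\theta)$ moves $(R_{ii},R_{jj})$ along an interval whose length depends on the off-diagonal entries $R_{ij}+R_{ji}$, so the reachable set is highly constrained: for instance, conjugating $R=I$ by any $G_{ij}(\theta)$ leaves $I$ fixed, so from the vertex $\lambda=\mathbf{1}$ one cannot move at all. Thus the ``convex hull of extreme points via one-parameter conjugations'' picture fails at the vertices, and the argument needs a fundamentally different construction -- not merely careful bookkeeping. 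A more promising route for (ii)~$\Rightarrow$~(i) is the converse of your own first step: given $d$ in the polytope, produce a ``paired'' spectrum $\mu$ (even multiplicities on $(-1,1)$ and at $-1$) with $d\prec\mu$, use Schur--Horn to get a symmetric $S=Q\,\diag(\mu)\,Q^T$ with diagonal $d$, and lift to $R:=QMQ^T\in SO(n)$ where $M$ is block-diagonal with $2\times2$ rotation blocks and $\pm1$'s chosen so that $\tfrac12(M+M^T)=\diag(\mu)$ and $\det M=1$; then $\tfrac12(R+R^T)=S$ has diagonal $d$, hence so does $R$. But even this route requires a nontrivial lemma (existence of the paired majorant $\mu$) which your proposal does not address. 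The equivalences with (iii) and (iv) in the nonnegative case are likewise only gestured at; they do amount to a vertex-versus-facet comparison of the truncated polytope, but that verification is precisely the content of the claim and is not carried out. Horn's own argument is inductive and constructive rather than a direct convexity statement about the conjugation orbit, so your proposal departs from the cited source in the direction where it is incomplete.
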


\begin{theorem}[\protect{\cite{Hor-1954-AJM}}]
  \label{thm:horn-unitary}
  A sequence $d \in \mathbb{C}^n$ (respectively $\mathbb{R}^n$), is the diagonal of an $n \times n$ unitary (respectively, orthogonal) matrix if and only if its sequence of absolute values satisfies any of the equivalent conditions of \Cref{thm:horn-rotation}.
\end{theorem}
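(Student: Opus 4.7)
The plan is to reduce both directions to \Cref{thm:horn-rotation} via a phase normalization. Let $\Phi := \diag(\phi_1, \ldots, \phi_n)$ where $\phi_i := d_i/|d_i|$ when $d_i \ne 0$ and $\phi_i := 1$ otherwise; in the complex case $\Phi$ is diagonal unitary, and in the real case each $\phi_i \in \{\pm 1\}$ so $\Phi$ is diagonal real orthogonal. The identity $(\Phi^* U)_{ii} = \overline{\phi_i} d_i = |d_i|$ shows that $d$ is a diagonal of a unitary (resp.~orthogonal) matrix $U$ if and only if $|d|$ is the diagonal of the unitary (resp.~orthogonal) matrix $\Phi^* U$, and conversely $\Phi R$ has diagonal $d$ when $R$ has diagonal $|d|$. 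Thus the theorem reduces to the claim that a nonnegative sequence $|d|$ is the diagonal of a unitary or orthogonal $n \times n$ matrix if and only if it satisfies the equivalent conditions of \Cref{thm:horn-rotation}.

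The sufficiency direction is then immediate: any rotation (produced by \Cref{thm:horn-rotation}) is simultaneously orthogonal and unitary, so if $|d|$ satisfies the stated conditions, composition with $\Phi$ as above realizes the desired unitary (resp.~orthogonal) matrix with diagonal $d$.

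The nontrivial direction is necessity: if $V$ is unitary with nonnegative real diagonal $v$, I must verify condition (iv), i.e., $v_i \in [0,1]$ and $2(1 - \min_i v_i) \le \sum_i (1 - v_i)$. The bound $v_i \le 1$ is immediate from $v_i = V_{ii} \le \|Ve_i\| = 1$. For the main inequality, I would apply the Schur--Horn theorem (\Cref{thm:schur-horn}) to the Hermitian part $H := (V+V^*)/2$, whose diagonal equals $v$ and whose eigenvalues are $(\cos\theta_j)_{j=1}^n$, where $\{e^{i\theta_j}\}_{j=1}^n$ are the eigenvalues of $V$. The resulting majorization $v \prec (\cos\theta_j)$ yields $\sum_i(1 - v_i) = \sum_j(1 - \cos\theta_j)$ and $\max_i(1 - v_i) \le \max_j(1 - \cos\theta_j)$.

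The main obstacle is that these consequences of majorization alone are insufficient: the rank-one reflection $V = I - 2P$ has eigenvalues $(1, \ldots, 1, -1)$ for which $\max_j(1 - \cos\theta_j) = 2$, even though condition (iv) still holds for its diagonal whenever $v \ge 0$---in that case (iv) collapses to the inequality $\max_i P_{ii} \le 1/2$, which is precisely the nonnegativity of $v = \mathbf{1} - 2\,\diag(P)$. The constraint $v \ge 0$ must therefore be used essentially. To close the argument, I would exploit the column-orthonormality relation $v_{i_0}V_{i_0 i} + v_i \overline{V_{i i_0}} = -\sum_{j \ne i_0, i} \overline{V_{j i_0}} V_{ji}$ at the index $i_0$ attaining $\min_i v_i$, together with the column norm identities $\sum_{j \ne i_0}|V_{j i_0}|^2 = 1 - v_{i_0}^2$ and Cauchy--Schwarz applied to the tail; summing the resulting pointwise bounds over $i \ne i_0$ should produce $1 - v_{i_0} \le \sum_{i \ne i_0}(1 - v_i)$, which is exactly (iv). The delicate point is forcing the square-root terms from Cauchy--Schwarz to combine linearly in the final sum; an alternative route is induction on $n$, using a Givens plane rotation to zero an off-diagonal entry of $V$ and thereby reducing to an $(n-1) \times (n-1)$ unitary with a controlled modification of the diagonal.
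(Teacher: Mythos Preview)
The paper does not supply its own proof of this theorem; it is quoted from Horn's 1954 paper and immediately followed by the remark that, since unitary (orthogonal) matrices are exactly the matrices whose singular value sequence is identically~$1$, both \Cref{thm:horn-rotation} and \Cref{thm:horn-unitary} are special cases of Thompson's theorem (\Cref{thm:thompson}). So the ``paper's proof'' is effectively: apply \Cref{thm:thompson} with $s_i \equiv 1$, which yields precisely condition~\ref{item:rotation-thompson} of \Cref{thm:horn-rotation} for the sequence $|d|$.

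Your phase-normalization reduction via $\Phi$ is correct and is the natural first move; it reduces both the complex/unitary and real/orthogonal cases to the question of when a \emph{nonnegative} sequence is the diagonal of some unitary (orthogonal) matrix. Your sufficiency direction is also fine.

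The genuine gap is in necessity. You correctly observe that Schur--Horn applied to $(V+V^{*})/2$ gives only $v \prec (\cos\theta_j)$, which is too weak, and then you propose a column-orthogonality/Cauchy--Schwarz estimate. But that argument is not completed: after Cauchy--Schwarz on the tail you obtain bounds of the form
\[
\bigl| v_{i_0} V_{i_0 i} + v_i \overline{V_{i i_0}} \bigr| \le \sqrt{\,1 - v_{i_0}^2 - |V_{i i_0}|^2\,}\,\sqrt{\,1 - v_i^2 - |V_{i_0 i}|^2\,},
\]
and there is no evident way to sum these and force the square roots to collapse into the linear inequality $1 - v_{i_0} \le \sum_{i\neq i_0}(1-v_i)$. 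Indeed, the easy doubly-stochastic bound $1 - v_{i_0}^2 = \sum_{i\neq i_0} |V_{i i_0}|^2 \le \sum_{i\neq i_0}(1-v_i^2)$ only yields the desired inequality up to a factor of~$2$, so something sharper than row/column norm bookkeeping is required. Your fallback suggestion of induction via a Givens rotation is plausible but is itself a nontrivial argument you have not carried out. In short, the necessity direction remains open in your proposal. The route the paper indicates---invoking \Cref{thm:thompson} with $s \equiv 1$---sidesteps this difficulty entirely and delivers condition~\ref{item:rotation-thompson} immediately.
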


Because the unitary (orthogonal) matrices are precisely the matrices whose sequence of singular values is the constant sequence with value 1, it is possible to recognize Theorems \ref{thm:horn-rotation} and \ref{thm:horn-unitary} as special cases of Thompson's Theorem (see \Cref{thm:thompson}) via the equivalence \Cref{thm:horn-rotation}\ref{item:rotation-thompson}.

In \cite{JLW-2016-IUMJ}, the authors with J.~Jasper were able to extend Horn's theorem to the infinite dimensional setting.

\begin{theorem}[\protect{\cite[Theorem~4.3]{JLW-2016-IUMJ}}]
  \label{thm:jasper-loreaux-weiss-unitary}
  A complex-valued sequence $d = (d_n)_{n=1}^{\infty}$ is a diagonal of some unitary operator $U$ if and only if its sequence of absolute values $\abs{d}$ takes values in $[0,1]$ and
  \begin{equation}
    \label{eq:unitary-thompson}
    2 \left(1 - \inf_{n \in \mathbb{N}} d_n \right) \le \sum_{n=1}^\infty (1 - d_n).
  \end{equation}
  Moreover, if $d$ is real-valued then $U$ can be chosen to be orthogonal.
\end{theorem}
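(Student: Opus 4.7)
The plan is to prove the two implications separately, first reducing to the real orthogonal case. Writing $d_n = |d_n| e^{i\theta_n}$, if $|d|$ is the diagonal of an orthogonal $O$ then $\diag(e^{i\theta_n}) O$ is unitary with diagonal $d$. So it suffices to handle the nonnegative case ($d \in [0,1]^{\mathbb N}$ with the Horn-type inequality) and construct an orthogonal $O$; the ``moreover'' claim about real $d$ is then automatic.

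For necessity, let $U$ be unitary with diagonal $(d_n)$. Then $|d_n| = |\angles{Ue_n,e_n}| \le \|U\| = 1$. If $\sum (1 - |d_n|) = \infty$ the inequality is trivial, so assume it is finite; then $\sum (1-|d_n|^2) < \infty$ and $U - D$ is Hilbert--Schmidt, where $D := \diag(d_n)$. Let $P_N$ denote the projection onto the span of $\{e_1,\ldots,e_N\}$, and let $U_N := P_N U P_N$ be the principal $N \times N$ compression, a contraction with diagonal $(d_1,\ldots,d_N)$. Since
\[
  \trace(P_N - U_N^* U_N) = \|(I - P_N)UP_N\|_2^2 = \|(I - P_N)(U-D)P_N\|_2^2 \to 0
\]
as $N \to \infty$, every singular value $s_i(U_N)$ converges to $1$, in particular the smallest, $s_N(U_N)$. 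Thompson's theorem applied to the contraction $U_N$ yields
\[
  \sum_{i=1}^N (1 - |d_i|) \;\ge\; \sum_{i=1}^N (1 - s_i(U_N)) + 2 \bigl(s_N(U_N) - \min_{i \le N} |d_i|\bigr),
\]
and sending $N \to \infty$ gives $\sum (1 - |d_n|) \ge 2(1 - \inf |d_n|)$.

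For sufficiency, assume $d \in [0,1]^{\mathbb N}$ with $\sum (1 - d_n) \ge 2(1 - \inf d_n)$, and set $\delta_n := 1 - d_n$. The idea is to partition $\mathbb N$ into disjoint blocks $B_1, B_2, \ldots$ so that on each block one has $2 \max_{n \in B_k} \delta_n \le \sum_{n \in B_k} \delta_n$, the finite Horn inequality of \Cref{thm:horn-rotation}\ref{item:rotation-thompson}. By that theorem there exists, for each $k$, a finite orthogonal $O_k$ of size $|B_k|$ with diagonal $(d_n)_{n \in B_k}$, and $O := \bigoplus_k O_k$ is the desired orthogonal. When $\sum \delta_n = \infty$, a greedy procedure produces such a partition: take $B_k$ to begin with the largest unassigned $\delta_n$ and append successive indices until the block inequality holds; the divergent tail ensures the procedure continues indefinitely. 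When $\sum \delta_n < \infty$, one has $\delta_n \to 0$, and one takes $B_1$ to be a large finite set containing the supremum of $\delta_n$ and enough further indices so that the block inequality holds, then iterates on the complement.

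The main obstacle lies in sufficiency in the tight regime, where $\sum \delta_n < \infty$ and the Horn inequality holds with near-equality. For instance, $\delta_n = 2^{-n-1}$ satisfies Horn on all of $\mathbb N$ with equality but on no finite prefix, so a decomposition into finite blocks is impossible, and the argument must construct an infinite-dimensional orthogonal directly on such a block. One natural approach is to realize it as an SOT-convergent infinite product of planar Givens rotations $R_{(n,n+1)}(\theta_n)$, choosing each angle $\theta_n$ recursively so that the $n$-th diagonal entry of the partial product equals $d_n$, with the defect from unitarity pushed ahead into later coordinates; the Horn-type inequality is the precise compatibility condition needed for this recursion to continue indefinitely, and the summability $\sum (1 - d_n) < \infty$ guarantees convergence of the product.
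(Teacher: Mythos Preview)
This survey does not prove the theorem; it merely cites \cite[Theorem~4.3]{JLW-2016-IUMJ}. So there is no in-paper argument to compare against, and I comment on your proposal on its own merits.

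Your necessity argument is sound. Compressing $U$ to the $N\times N$ corner $U_N$, observing $\trace(P_N-U_N^{*}U_N)=\|P_N^{\perp}(U-D)P_N\|_2^2\to 0$ when $U-D$ is Hilbert--Schmidt, and feeding the resulting $s_N(U_N)\to 1$ into Thompson's inequality (\Cref{thm:thompson}) cleanly yields $2(1-\inf_n|d_n|)\le\sum_n(1-|d_n|)$ in the limit. The reduction of the complex case to the nonnegative real case via $\diag(e^{i\theta_n})O$ is also fine.

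The sufficiency argument has a genuine gap precisely where you flag it. For the product $O=R_{(1,2)}(\theta_1)R_{(2,3)}(\theta_2)\cdots$ one computes $(O)_{11}=\cos\theta_1$ and $(O)_{nn}=\cos\theta_{n-1}\cos\theta_n$ for $n\ge 2$, so the recursion forced by $(O)_{nn}=d_n$ is $c_n:=\cos\theta_n=d_n/c_{n-1}$ with $c_0=1$. For your own test case $\delta_n=2^{-n-1}$ this already gives $c_1=d_1=\tfrac34$ and $c_2=d_2/d_1=\tfrac{7/8}{3/4}=\tfrac76>1$, so no angle $\theta_2$ exists; and since $d_1$ is the global minimum of the sequence, no reordering helps. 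The Horn-type inequality is \emph{not} the compatibility condition for this recursion. Nor can the tight regime be dodged by a cleverer block decomposition in the summable case: for sequences with a geometric tail such as $\delta_n=2^{-n}$, every tail $\{N,N+1,\ldots\}$ satisfies $2\max=\sum$ exactly, so any finite-block peeling procedure leaves behind a tight infinite remainder. A correct proof therefore requires a genuinely different construction in the equality case $\delta_{\max}=\sum_{n\neq\max}\delta_n$; the infinite Givens product as you describe it does not supply one.
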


\section{General operators}
\label{sec:miscellaneous}

In this section we review results concerning diagonals of general operators with no assumptions of selfadjointness or normality.
One of the early results along these lines is due to Thompson \cite{Tho-1977-SJAM} and, in dimension 2, independently to Sing \cite{Sin-1976-CMB}.
It is a finite dimensional result which characterizes diagonals of the collection of matrices with specified singular values.

\begin{theorem}[\cite{Tho-1977-SJAM}]
  \label{thm:thompson}
  Let $0 \le s \in \mathbb{R}^n$ be a nonincreasing sequence and $d \in \mathbb{C}^n$ a complex-valued sequence.
  There is an $n \times n$ matrix $T$ with singular value sequence $s$ and diagonal $d$ if and only if for the monotone nonincreasing rearrangement $\abs{d}^{*}$ of the sequence of absolute values of $d$,
  \begin{equation*}
    \sum_{i=1}^k \abs{d}_i^{*} \leq \sum_{i=1}^k s_i \quad\text{for } 1 \le k \le n,
  \end{equation*}
  and
  \begin{equation*}
    2(s_n - \abs{d}_n^{*}) \le \sum_{i=1}^n s_i - d_i.
  \end{equation*}
  Moreover, if $d$ is real-valued, we may choose the matrix $T$ to have real-valued entries.
\end{theorem}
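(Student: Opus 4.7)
The plan is to prove necessity and sufficiency separately. For necessity, suppose $T$ has singular values $s$ and diagonal $d$. The weak-majorization inequalities $\sum_{i=1}^k \abs{d}_i^{*} \le \sum_{i=1}^k s_i$ follow from Ky Fan's variational formula
\begin{equation*}
  \sum_{i=1}^k s_i = \max \bigl\{ \abs{\trace(PTQ^{*})} : P, Q \text{ are rank-}k \text{ partial isometries} \bigr\}.
\end{equation*}
Choosing $P = Q$ to be the coordinate projection onto the $k$ indices realizing the top $k$ values of $\abs{d}$, and absorbing phases into $P$ so that each contribution is $\abs{d}_i^{*}$, gives the estimate directly. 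The terminal inequality is equivalent to $s_n - \abs{d}_n^{*} \le \sum_{i<n}(s_i - \abs{d}_i^{*})$, stating that the deficit at the smallest singular value is bounded by the cumulative deficit above. My proposed route here is to pass to the $2n \times 2n$ Hermitian dilation $\bigl(\begin{smallmatrix} 0 & T \\ T^{*} & 0 \end{smallmatrix}\bigr)$, whose eigenvalues are $\pm s_i$ and whose diagonal vanishes, and then apply the Schur--Horn theorem (Theorem~\ref{thm:schur-horn}) to a sign-modified conjugate of this dilation; the resulting majorization specializes to the terminal condition.

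For sufficiency, I would proceed by induction on $n$. The base case $n = 2$ reduces to Sing's theorem \cite{Sin-1976-CMB}, proved by explicit parametrization of the $2 \times 2$ unitaries in the singular value decomposition. For the inductive step, given $(s, d)$ in dimension $n$ satisfying the hypotheses, I would peel off the index realizing $\abs{d}_n^{*}$ and construct an $(n-1)$-tuple $s' \in \mathbb{R}^{n-1}$ with (i) the hypotheses of the theorem holding for $(s', d')$, where $d'$ is $d$ with one entry removed, and (ii) any matrix $T'$ of order $n-1$ with singular values $s'$ and diagonal $d'$ admitting a bordering by a row and column producing an $n \times n$ matrix with singular values $s$ and extra diagonal entry $\abs{d}_n^{*}$. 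The bordering step is controlled by the classical interlacing inequalities relating the singular values of a matrix to those of its principal submatrices.

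The main obstacle is the inductive step: proving the existence of the intermediate sequence $s'$. This is a combinatorial/convex-geometric problem in which the terminal inequality plays an essential role, since without it the bordering cannot be completed when $\abs{d}_n^{*}$ is close to $s_n$. The real-valued refinement, when $d \in \mathbb{R}^n$, is obtained by carrying out the same induction within the real field, using Horn's characterization of diagonals of orthogonal matrices (Theorem~\ref{thm:horn-unitary}, real case) as the backbone and real rotations to implement the bordering.
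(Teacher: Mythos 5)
The paper does not prove this theorem; it is quoted verbatim from Thompson \cite{Tho-1977-SJAM} and stated without proof, so there is no in-paper argument to compare against. Judged on its own, your sketch has a genuine gap in the necessity of the terminal inequality.

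Your Ky Fan argument for the weak majorization inequalities is fine. But Schur--Horn applied to a conjugate of the Hermitian dilation cannot produce the terminal inequality. Take a diagonal unitary $D$ so that $A=DT$ has nonnegative diagonal $\abs{d}$; form $H=\bigl(\begin{smallmatrix} 0 & A \\ A^{*} & 0 \end{smallmatrix}\bigr)$ and conjugate by $W=\tfrac{1}{\sqrt 2}\bigl(\begin{smallmatrix} I & I \\ I & -I \end{smallmatrix}\bigr)$. The result is a $2n\times 2n$ Hermitian matrix with eigenvalue list $(s_1,\ldots,s_n,-s_1,\ldots,-s_n)$ and diagonal $(\abs{d_1},\ldots,\abs{d_n},-\abs{d_1},\ldots,-\abs{d_n})$, and one checks that the resulting Schur--Horn majorization inequalities telescope down to exactly the weak majorization $\sum_{i=1}^{k}\abs{d}^{*}_i\le\sum_{i=1}^k s_i$ for $1\le k\le n$ and nothing more: the dilation simply forgets the terminal condition. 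A concrete witness: for $n=2$ and $s=(1,1)$, $T$ is a $2\times 2$ unitary and the terminal inequality forces $\abs{d_1}=\abs{d_2}$, yet the dilation argument yields only $\abs{d_i}\le 1$, satisfied by many pairs that are not diagonals of any unitary. The terminal inequality encodes a determinant/orientation constraint invisible to Schur--Horn; roughly, if one flips a single sign in the phase-absorbing $D$ to get $D'$, then $\trace(D'T)=\sum_{i<n}\abs{d}^{*}_i-\abs{d}^{*}_n$, and what one needs is the sharper bound $\Re\trace(UT)\le\sum_{i<n}s_i-s_n$ valid when $U$ lies in the determinant class opposite to the polar unitary of $T$ --- a refinement of Fan's variational principle that your sketch does not supply and that no conjugation of the dilation will manufacture.

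Your sufficiency outline (induction via bordering, base case from Sing) is in the right spirit and is close to Thompson's own strategy, but the step you flag as the main obstacle --- producing an intermediate singular value sequence $s'$ compatible both with the interlacing needed for the bordering and with the hypotheses at order $n-1$ --- is in fact the heart of the proof and is left open here.
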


In \cite{JLW-2016-IUMJ}, the authors with J.~Jasper were able to extend Thompson's Theorem to compact operators in the natural way.
In particular, since both the sequence $s,d$ converge to zero for a compact operator $T$, it is natural to expect that the second condition in Thompson's theorem disappears entirely, and this is exactly the outcome.

\begin{theorem}[\protect{\cite[Theorem~3.9]{JLW-2016-IUMJ}}]]
  \label{thm:compact-thompson}
  If $s = (s_n)_{n=1}^{\infty}$ is a nonnegative nonincreasing sequence and $d = (d_n)_{n=1}^{\infty}$ is a complex-valued sequence, both converging to zero, then there is a compact operator $T$ with singular value sequence $s$ and diagonal $d$ if and only if, for the monotone nonincreasing rearrangement $\abs{d}^{*}$ of the sequence of absolute values of $d$,
  \begin{equation*}
    \sum_{i=1}^k \abs{d}_i^{*} \leq \sum_{i=1}^k s_i \quad\text{for } k \in \mathbb{N}.
  \end{equation*}
  Moreover, if $d$ is real-valued, we may choose the matrix $T$ to have real-valued entries.
\end{theorem}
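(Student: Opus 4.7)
\emph{Necessity.} This direction follows from Ky Fan's inequality for compact operators, namely $\sum_{i=1}^k s_i(T) = \sup\{\sabs{\trace(TX)} : \snorm{X} \leq 1,\ \rank X \leq k\}$. Given a compact $T$ with singular value sequence $s$ and diagonal $d = (\sangles{Te_n,e_n})$ in an orthonormal basis $\{e_n\}$, fix $k$, pick indices $n_1, \ldots, n_k$ realizing the $k$ largest moduli $\abs{d}_1^{*}, \ldots, \abs{d}_k^{*}$, and set $\phi_i := \arg(d_{n_i})$. The rank-$k$ contraction $X := \sum_{i=1}^k e^{-i\phi_i} e_{n_i} e_{n_i}^{*}$ satisfies $\trace(TX) = \sum_{i=1}^k e^{-i\phi_i} d_{n_i} = \sum_{i=1}^k \abs{d}_i^{*}$, so Ky Fan's inequality yields $\sum_{i=1}^k \abs{d}_i^{*} \leq \sum_{i=1}^k s_i$.

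\emph{Sufficiency.} After conjugating by a permutation unitary (which preserves compactness, singular values, and realness of entries), we may assume $\sabs{d_1} \geq \sabs{d_2} \geq \cdots$. The strategy is to build $T$ as an orthogonal direct sum $T = \bigoplus_{k=1}^{\infty} T_k$, where each $T_k$ is a finite matrix produced by the finite-dimensional Thompson theorem (\Cref{thm:thompson}) applied on a block $B_k := \{n_{k-1}+1, \ldots, n_k\}$ to the singular values $(s_i)_{i \in B_k}$ and diagonal entries $(d_i)_{i \in B_k}$. The resulting $T_k$ are real when $d$ is real. Compactness of $T$ follows from $\snorm{T_k} \leq s_{n_{k-1}+1} \to 0$, and $T$ inherits the full diagonal $d$ and singular value sequence $s$ blockwise by construction.

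The core task is choosing the block endpoints $0 = n_0 < n_1 < n_2 < \cdots$ so that both finite-dimensional Thompson conditions hold on each block. Setting $\delta_n := \sum_{i=1}^n(s_i - \sabs{d_i})$, which is nonnegative by the Ky Fan hypothesis, these conditions translate to (T1) $\delta_j \geq \delta_{n_{k-1}}$ for $n_{k-1} \leq j \leq n_k$, and (T2) $2(s_{n_k} - \sabs{d_{n_k}}) \leq \delta_{n_k} - \delta_{n_{k-1}}$. Greedily, I would set $n_k$ to be the smallest index exceeding $n_{k-1}$ for which (T1) still holds at $n_k$ and (T2) is met. When the (T1)-admissible set is infinite, (T2) becomes tractable because $s_n - \sabs{d_n} \to 0$, while $\delta_{n_k} - \delta_{n_{k-1}}$ either grows without bound (when $\delta$ is unbounded) or is identically zero on both sides (when $\sabs{d_j} = s_j$ for all $j > n_{k-1}$, in which case one can even take 1$\times$1 blocks).

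The main obstacle is the bounded case, where the (T1)-admissible set ends at some $M$ with $\delta_{M+1} < \delta_{n_{k-1}}$, equivalently $\sabs{d_{M+1}} > s_{M+1}$. One must then demonstrate some $n_k \in (n_{k-1}, M]$ satisfies (T2). I expect to tackle this by a finer analysis of the partial sums $\delta_j - \delta_{n_{k-1}}$ on $(n_{k-1}, M]$ together with the monotonicity of $\sabs{d}$ and $s$; if direct consecutive assignment does not close the argument, one can exploit the extra freedom to reassign singular value indices to blocks bijectively rather than in consecutive packets, so long as the overall multiset of singular values is preserved---this produces a block-diagonal compact operator still having the prescribed singular values and diagonal.
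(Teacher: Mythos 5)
Your necessity argument via Ky Fan's inequality is correct and complete: choosing a rank-$k$ partial isometry supported on the $k$ indices realizing $\abs{d}^*_1,\dots,\abs{d}^*_k$, with phases cancelling the arguments of the corresponding $d_{n_i}$, gives $\trace(TX) = \sum_{i=1}^k \abs{d}^*_i$, and Ky Fan's bound does the rest.

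The sufficiency direction, however, is a sketch with an acknowledged gap, and the sketch as written does not close. Two separate problems arise. First, the greedy rule of taking $n_k$ to be the \emph{smallest} admissible index is unsound: it can select an $n_k$ for which $\delta_{n_k}$ exceeds all subsequent values of $\delta$, so that the (T1)-admissible set for block $k+1$ is empty. For example, with $s=(3,0,0,\dots)$ and $\abs{d}^* = (1,1,1,0,0,\dots)$ one has $\delta=(2,1,0,0,\dots)$; (T2) first holds at $n_1=2$, but then (T1) for block $2$ requires $\delta_3\ge\delta_2=1$, which fails. (Taking $n_1=3$ works, but nothing in your greedy rule produces it, and no argument is offered that a workable $n_k$ exists in general.) Second, and more seriously, consecutive blocks are genuinely insufficient. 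Take $s_1=2$, $s_n=1/n^2$ for $n\ge 2$, and $\abs{d}^*_1=1$, $\abs{d}^*_n=2/n^2$ for $n\ge 2$. Both sequences are nonincreasing and tend to $0$, and $\abs{d}^*\pprec s$ since $\delta_n = 1 - \sum_{i=2}^n i^{-2} \searrow 2-\pi^2/6>0$. But $\delta$ is strictly decreasing, so for \emph{every} $n_1\ge 2$ (and $n_1=1$ is ruled out because $2(s_1-\abs{d}^*_1)=2>\delta_1=1$), condition (T1) for block $2$ demands $\delta_j\ge\delta_{n_1}$ for some $j>n_1$, which never holds. Hence no decomposition into consecutive finite blocks exists here, even though the theorem asserts the operator exists. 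This shows the obstacle you describe is not a technicality to be patched by a ``finer analysis of the partial sums'' on consecutive packets; the bijective reassignment of singular-value indices to blocks, which you mention only in passing, is not a fallback but the essential content of the proof, and it is left entirely undeveloped. As it stands the sufficiency direction is not proved.
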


As described in \Cref{sec:general-selfadjoint}, the paper \cite{MT-2019-TAMS} of M\"uller and Tomilov focuses on Blaschke-type conditions as sufficient conditions for a sequence to be the diagonal of an operator.
Not only are their results generally applicable for tuples of selfadjoint operators, they also describe several results for general single operators (or tuples).
In fact, \Cref{thm:blaschke-muller-tomilov} for selfadjoint operators extends nearly verbatim to general operators with the key difference being that in \autoref{thm:blaschke-muller-tomilov} the interior of the essential numerical range is taken with respect to $\mathbb{R}$, whereas in \autoref{thm:general-blaschke} it is with respect to $\mathbb{C}$.

\begin{theorem}[\protect{\cite[Corollary 4.3 (simplified to a single operator)]{MT-2019-TAMS}}]
  \label{thm:general-blaschke}
  Let $T \in B(\Hil)$ be any operator and suppose $(d_n)_{n=1}^{\infty}$ is a sequence which takes values in $\Int \essnr(T)$ and satisfies
  \begin{equation}
    \label{eq:blaschke-condition-general}
    \sum_{n=1}^{\infty} \dist ( d_n , \mathbb{C} \setminus \essnr(T) ) = \infty.
  \end{equation}
  Then $(d_n)_{n=1}^{\infty} \in \D(T)$.
\end{theorem}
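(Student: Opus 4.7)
The plan is to deduce this general result from the selfadjoint analogue by passing to the Cartesian decomposition of $T$. Write $T = A + iB$, where $A = \tfrac{1}{2}(T+T^{*})$ and $B = \tfrac{1}{2i}(T-T^{*})$ are selfadjoint, and decompose $d_{n} = a_{n} + ib_{n}$ with $a_{n},b_{n} \in \mathbb{R}$. Finding an orthonormal basis $(e_{n})$ with $\angles{Te_{n},e_{n}} = d_{n}$ is equivalent to finding one satisfying $\angles{Ae_{n},e_{n}} = a_{n}$ and $\angles{Be_{n},e_{n}} = b_{n}$ simultaneously, that is, realizing $((a_{n},b_{n}))$ as a joint diagonal of the pair $(A,B)$.

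Next I would verify that under the identification $\mathbb{C} \cong \mathbb{R}^{2}$ sending $x+iy$ to $(x,y)$, the essential numerical range $\essnr(T) \subseteq \mathbb{C}$ coincides with the joint essential numerical range $\essnr(A,B) \subseteq \mathbb{R}^{2}$, defined as the set of limits $(\lim_n \angles{Ae_{n},e_{n}}, \lim_n \angles{Be_{n},e_{n}})$ along weakly null sequences of unit vectors. This is immediate since $\angles{Te_{n},e_{n}} \to x+iy$ if and only if the real and imaginary parts converge separately to $x$ and $y$. Under this identification the hypotheses translate verbatim: $((a_{n},b_{n})) \subseteq \Int \essnr(A,B)$ and $\sum_{n} \dist((a_{n},b_{n}), \mathbb{R}^{2} \setminus \essnr(A,B)) = \infty$, the latter being identical term-by-term to~\eqref{eq:blaschke-condition-general}. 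Applying the full, multi-operator version of the M\"uller--Tomilov theorem (of which \Cref{thm:blaschke-muller-tomilov} is the single-operator specialization) to the pair $(A,B)$ then produces an orthonormal basis $(e_{n})$ with $\angles{Ae_{n},e_{n}} = a_{n}$ and $\angles{Be_{n},e_{n}} = b_{n}$ for every $n$, hence $\angles{Te_{n},e_{n}} = d_{n}$, so $(d_{n}) \in \D(T)$.

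The main obstacle is packaged inside the step being outsourced: the tuple strengthening of \Cref{thm:blaschke-muller-tomilov}. The Cartesian reduction above is essentially bookkeeping, but the tuple version requires an inductive construction of unit vectors $e_{n}$ that simultaneously hit two prescribed diagonal targets while remaining orthogonal to those previously constructed. The interior hypothesis provides the geometric maneuvering room at each stage for both components to be realized at once, while the divergent Blaschke-type sum supplies enough cumulative slack for the tiny approximation errors incurred at each step to be absorbed into later steps and compound to exact equalities in the limit. Once this joint strengthening is granted, the passage from the selfadjoint case to an arbitrary $T \in B(\Hil)$ is essentially formal.
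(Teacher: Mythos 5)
Your proposal is correct and follows the same route as the cited source: the paper offers no independent proof (it simply states M\"uller--Tomilov's Corollary 4.3), and the Cartesian reduction $T = A + iB$ together with the identification of $\essnr(T) \subseteq \mathbb{C}$ with the joint essential numerical range of the pair $(A,B) \subseteq \mathbb{R}^2$ is exactly how M\"uller and Tomilov deduce the complex-valued statement from the self-adjoint tuple version of \Cref{thm:blaschke-muller-tomilov}. You rightly flag that the real content is outsourced to the tuple theorem for arbitrary (not necessarily commuting) pairs of self-adjoint operators; that theorem is what the paper declines to state only because defining the joint essential numerical range was judged too cumbersome for this survey.
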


M\"uller and Tomilov also manage to prove an approximation theorem for diagonals when the Blaschke condition is finite instead of infinite.

\begin{theorem}[\protect{\cite[Corollary 5.1 (simplified to a single operator)]{MT-2019-TAMS}}]
  \label{thm:blascke-p-summable-approximation}
  Let $T \in B(\Hil)$ and let $p > 1$.
  If $(d_n)_{n=1}^{\infty}$ is a complex-valued sequence satisfying
  \begin{equation}
    \label{eq:p-blaschke-summable}
    \sum_{n=1}^{\infty} \dist^p (d_n, \mathbb{C} \setminus \essnr(T)) < \infty, 
  \end{equation}
  then there is a compact operator $K$ in the Schatten ideal $\mathcal{C}_p$ such that $(d_n) \in \D(T+K)$.
\end{theorem}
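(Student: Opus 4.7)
The plan is to find an auxiliary sequence $(\tilde d_n) \subseteq \Int \essnr(T)$ that satisfies the Blaschke condition \eqref{eq:blaschke-condition-general} of \Cref{thm:general-blaschke} and differs from $(d_n)$ in the $\ell^p$ sense. Applying \Cref{thm:general-blaschke} will produce an orthonormal basis $(e_n)$ realizing $\tilde d_n = \langle T e_n, e_n \rangle$, and then the diagonal operator $K$ with entries $d_n - \tilde d_n$ in that basis will be a compact operator in $\mathcal{C}_p$ satisfying $(d_n) \in \D(T+K)$.

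To construct $(\tilde d_n)$, fix a base point $c \in \Int \essnr(T)$ with $r := \dist(c, \mathbb{C} \setminus \essnr(T)) > 0$ (assuming $\essnr(T)$ has nonempty interior in $\mathbb{C}$; otherwise one reduces to an effectively lower-dimensional setting and invokes \Cref{thm:blaschke-muller-tomilov}). Since $p > 1$, pick a sequence $a_n > 0$ with $\sum a_n^p < \infty$ but $\sum a_n = \infty$, for example $a_n = 1/n$. Set $t_n := \min(1, a_n / r)$ and
\[
\tilde d_n := (1 - t_n) d_n + t_n c.
\]
The key geometric fact is that on the convex set $\essnr(T)$ the function $z \mapsto \dist(z, \mathbb{C} \setminus \essnr(T))$ is concave, which follows from the Minkowski-convexity identity $B((1-\lambda)z_1 + \lambda z_2, (1-\lambda) r_1 + \lambda r_2) \subseteq \essnr(T)$ whenever $B(z_i, r_i) \subseteq \essnr(T)$. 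This yields $\dist(\tilde d_n, \mathbb{C} \setminus \essnr(T)) \geq t_n r = \min(r, a_n)$, so $\tilde d_n \in \Int \essnr(T)$ and $\sum_n \dist(\tilde d_n, \mathbb{C} \setminus \essnr(T)) = \infty$. Since $\essnr(T)$ is bounded, $|c - d_n| \leq D$ for some constant $D$, whence $|\tilde d_n - d_n| = t_n |c - d_n| \leq D t_n$ and $\sum |\tilde d_n - d_n|^p \leq D^p \sum t_n^p < \infty$.

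\Cref{thm:general-blaschke} applied to $(\tilde d_n)$ then produces an orthonormal basis $(e_n)$ with $\langle T e_n, e_n \rangle = \tilde d_n$. Defining $K$ to be the diagonal operator in this basis with entries $d_n - \tilde d_n$, its singular values form a rearrangement of $(|d_n - \tilde d_n|)$, so $K \in \mathcal{C}_p$, and $\langle (T+K) e_n, e_n \rangle = d_n$ gives $(d_n) \in \D(T+K)$. The main obstacle is the behavior of those $d_n$ lying outside $\essnr(T)$: since $\dist(d_n, \mathbb{C}\setminus\essnr(T)) = 0$ there, the hypothesis imposes no constraint on such terms, so a preliminary step is needed to absorb them---either by assuming $(d_n) \subseteq \closure{\essnr(T)}$ (a hypothesis implicit in the statement) or by projecting each exterior point onto $\closure{\essnr(T)}$ and absorbing the differences (necessarily controlled by an additional $p$-summability assumption on the exterior excursions) into the final perturbation $K$.
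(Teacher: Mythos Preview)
The paper does not contain a proof of this theorem; it is quoted from \cite{MT-2019-TAMS} as a survey item, so there is no ``paper's own proof'' to compare against.

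That said, your strategy is the natural one and is essentially how such results are derived from \Cref{thm:general-blaschke}: push each $d_n$ slightly toward a fixed interior point $c$ so as to gain a divergent Blaschke sum while paying only an $\ell^p$ cost, apply \Cref{thm:general-blaschke} to the modified sequence, and absorb the correction into a diagonal $K \in \mathcal{C}_p$. The concavity argument for $\dist(\cdot,\mathbb{C}\setminus\essnr(T))$ on the convex set $\essnr(T)$ is correct and gives exactly the lower bound $\dist(\tilde d_n,\mathbb{C}\setminus\essnr(T)) \ge t_n r$ you need.

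You have also put your finger on a genuine defect in the statement as reproduced here: without the hypothesis $d_n \in \essnr(T)$ (or at least $d_n \in \Int\essnr(T)$), the theorem is simply false, since any unbounded sequence lying entirely in $\mathbb{C}\setminus\essnr(T)$ satisfies \eqref{eq:p-blaschke-summable} trivially but cannot be the diagonal of a bounded operator. In the original \cite[Corollary~5.1]{MT-2019-TAMS} the sequence is required to lie in $\Int\essnr(T)$; that hypothesis was dropped in transcription. With it restored, your convex-combination construction goes through cleanly (your concavity step needs $d_n \in \essnr(T)$), and the ``main obstacle'' you flag disappears. Your reduction for the case $\Int\essnr(T) = \emptyset$ deserves one more sentence: if the interior in $\mathbb{C}$ is empty then $\essnr(T)$ lies on a line, after an affine change one is in the selfadjoint setting, and \Cref{thm:blaschke-muller-tomilov} together with the same diagonal-correction trick finishes the job.
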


In addition, it should be noted that the work of Herrero in \cite{Her-1991-RMJM} preceded and inspired \cite{MT-2019-TAMS}, and we have only refrained from mentioning the main theorem of Herrero because it is completely subsumed by \Cref{thm:general-blaschke} above.
In turn, Herrero mentions that he views the start of the attention to diagonals of operators (presumably not just selfadjoint operators) as originating with the work of Fan \cite{Fan-1984-TAMS}, and then later Fan, Fong and Herrero \cite{FFH-1987-PAMS}.

In \cite{Fan-1984-TAMS}, Fan proved an extension to infinite dimensions of the following finite dimensional result: any $n \times n$ matrix with trace zero has an orthonormal basis with respect to which the diagonal is identically zero.

\begin{theorem}[\protect{\cite[Theorem~1]{Fan-1984-TAMS}}]
  \label{thm:fan-zero-diagonal}
  A necessary and sufficient condition that an operator $T$ has a zero-diagonal (i.e., a diagonal whose entries are all zero) is that there exists an orthonormal basis $\{e_j\}_{j=1}^{\infty}$ so that a subsequence of the partial sums
  \begin{equation*}
    \sum_{j=1}^n \angles{Te_j,e_j}
  \end{equation*}
  of the diagonal entries relative to this basis converges to zero.
\end{theorem}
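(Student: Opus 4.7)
The forward direction is immediate: if $T$ admits an orthonormal basis $\{f_j\}$ with $\langle Tf_j, f_j\rangle = 0$ for every $j$, then every partial sum $\sum_{j=1}^n \langle Tf_j, f_j\rangle$ is zero, so trivially some subsequence tends to zero.

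For the reverse direction, let $\{e_j\}_{j=1}^\infty$ be the given basis and $(n_k)_{k=1}^\infty$ the subsequence with $s_{n_k} := \sum_{j=1}^{n_k} \langle Te_j, e_j\rangle \to 0$; put $n_0 = 0$, so $s_{n_0}=0$. By passing to a further subsequence I may assume that the blocks $B_k := \{n_{k-1}+1,\ldots,n_k\}$ satisfy $|B_k|\ge 2$. My plan is a two-stage unitary change of basis: first, rotations within each block that install a telescoping block-diagonal; second, disjoint $2\times 2$ rotations across consecutive blocks that cancel the telescope.

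In stage one, the compression of $T$ to $V_k := \spans\{e_j : j\in B_k\}$ has trace $t_k = s_{n_k}-s_{n_{k-1}}$. Using Fillmore's finite-dimensional theorem---that any tuple summing to $\trace(A)$ occurs as a diagonal of the $n\times n$ matrix $A$ in some orthonormal basis---I choose a unitary on $V_k$ producing the intra-block diagonal $(s_{n_k}, -s_{n_{k-1}}, 0,\ldots,0)$, whose entries sum to $t_k$. Let $\{e'_j\}$ denote the resulting orthonormal basis of $\Hil$. In stage two, for each $k\ge 1$ the $2\times 2$ compression of $T$ to $W_k := \spans\{e'_{n_{k-1}+1},\, e'_{n_k+2}\}$ now has diagonal $(s_{n_k}, -s_{n_k})$, hence trace zero, so Fillmore's theorem in dimension two furnishes an orthonormal basis of $W_k$ on which the compression has zero diagonal. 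Since $|B_k|\ge 2$, the pairs $\{n_{k-1}+1,\, n_k+2\}_{k\ge 1}$ are pairwise disjoint, and using $s_{n_0}=0$ they cover every coordinate where the stage-one diagonal could be nonzero. Assembling all these local rotations produces an orthonormal basis $\{f_j\}$ of $\Hil$ with $\langle Tf_j, f_j\rangle=0$ for every $j$.

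The heart of the argument---and where naive attempts fail---is the choice of the \emph{telescoping} diagonal $(s_{n_k},-s_{n_{k-1}},0,\ldots)$ in stage one. Concentrating the block trace in a single entry would yield a residual sequence $(t_k) = (s_{n_k}-s_{n_{k-1}})$ that merely tends to zero, with no structural reason for adjacent terms to cancel against one another. The telescope, by contrast, is engineered so that the $s_{n_k}$ placed at the start of $B_k$ is matched exactly by the $-s_{n_k}$ placed at the second slot of $B_{k+1}$, reducing stage two to a disjoint family of trace-zero $2\times 2$ rotations---each of which admits a zero-diagonal basis by the lowest-dimensional case of Fillmore's theorem.
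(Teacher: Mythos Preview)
The paper is a survey and does not give its own proof of this theorem; it merely quotes Fan's result with a citation to \cite{Fan-1984-TAMS}. So there is no in-paper argument to compare against.

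That said, your argument is correct. The forward direction is indeed trivial. For the converse, the two-stage plan works: the compression of $T$ to $V_k$ has trace $s_{n_k}-s_{n_{k-1}}$, so Fillmore's theorem lets you install the telescoping block diagonal $(s_{n_k},-s_{n_{k-1}},0,\ldots,0)$; the pairing $\{n_{k-1}+1,\,n_k+2\}$ across consecutive blocks then gives $2\times2$ compressions of trace zero, and these pairs are disjoint precisely because you arranged $|B_k|\ge2$. Your observation that $s_{n_0}=0$ handles the potentially orphaned second slot of $B_1$ is exactly the bookkeeping needed to ensure every possibly-nonzero entry is absorbed into some $W_k$. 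The passage to a subsequence with $|B_k|\ge2$ is harmless since one can always thin an increasing sequence of integers to achieve gaps of at least two while preserving convergence of $s_{n_k}$ to zero.

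This is in the spirit of Fan's original proof, which also reduces to finite-dimensional blocks via Fillmore-type diagonal prescriptions; your telescoping device is a clean way to organize the cancellation.
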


This sparked Fan, Fong and Herrero in \cite{FFH-1987-PAMS} to determine the shape of the set of ``traces'' of an operator $T$.
To explain this, suppose that $\{e_j\}_{j=1}^{\infty}$ is an orthonormal basis relative to which $\sum_{j=1}^n \angles{Te_j,e_j}$ converges to a sum $s$ as $n \to \infty$.
Then $R \{\trace T\}$ denotes the set of all such $s$ as $\{e_j\}_{j=1}^{\infty}$ ranges over all orthonormal bases (for which the partial sums converge).
Fan, Fong and Herrero managed to show that $R \{ \trace T\}$ can only have one of four possible shapes: empty, a point, a line or the entire plane $\mathbb{C}$.
Moreover, they characterized when each situation occurs.

\begin{theorem}[\protect{\cite[Theorem~4]{FFH-1987-PAMS}}]
  \label{thm:fan-fong-herrero-trace-shape}
  For any operator $T \in B(\Hil)$, the set $R \{ \trace T\}$ of traces of $T$ is:
  \begin{enumerate}
  \item empty if and only if for some $\theta$, $\Re(e^{i\theta} T)_+$ is not trace-class, but $\Re(e^{i\theta} T)_+$ \emph{is} trace-class;
  \item a point if and only if $T$ is trace-class;
  \item a line if and only if for some $\theta$, $\Re(e^{i\theta} T)$ is trace-class, but neither of $\Im(e^{i\theta} T)_+$ is trace-class;
  \item the entire plane $\mathbb{C}$ if and only if for all $\theta$, $\Re(e^{i\theta} T)_+$ is not trace-class.
  \end{enumerate}
\end{theorem}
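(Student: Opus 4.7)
The plan is to reduce everything to the selfadjoint case via $T = X + iY$ with $X := \Re T$ and $Y := \Im T$, observing that in any ONB $\{e_j\}$ the complex partial sum $\sum_{j=1}^n \angles{Te_j,e_j}$ converges precisely when both real sequences $\sum_j \angles{Xe_j,e_j}$ and $\sum_j \angles{Ye_j,e_j}$ do. As $\theta$ varies, $\Re(e^{i\theta}T) = \cos\theta\,X - \sin\theta\,Y$ exhausts all real-linear combinations of $X$ and $Y$, and a rotation-rigidity remark says that if $\Re(e^{i\theta_1}T)$ and $\Re(e^{i\theta_2}T)$ are both trace-class with $\theta_1 - \theta_2 \not\equiv 0 \pmod{\pi}$, then $X,Y$, and hence $T$, are trace-class. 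Combined with the fact that replacing $\theta$ by $\theta+\pi$ interchanges the positive and negative parts of $\Re(e^{i\theta}T)$, this yields a clean four-way partition of $B(\Hil)$ whose cells match the right-hand sides of (i)--(iv) and are mutually exclusive; only the forward implications then require work.

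The first piece of genuine content is the selfadjoint trichotomy for a selfadjoint operator $A$. First, if $A$ is trace-class then $\sum_j \angles{Ae_j,e_j} = \trace A$ in every ONB. Second, if exactly one of $A_\pm$ is trace-class then the partial sums diverge to $\pm\infty$ in every ONB, using that the trace of a positive operator is basis-invariant and equals $+\infty$ exactly when the operator is not trace-class. Third, if neither of $A_\pm$ is trace-class then the trace set equals $\mathbb{R}$. Only the third item requires genuine work; I would establish it by a conditional-convergence construction: given $r \in \mathbb{R}$, build an ONB inductively by drawing unit vectors alternately from the positive and negative spectral subspaces of $A$ so that the running partial sum oscillates around $r$ with vanishing overshoot, in the spirit of the argument underlying Fan's \Cref{thm:fan-zero-diagonal}.

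Cases (i), (ii), and (iii) then follow by rotation. For (ii), the trace-class hypothesis yields $R\{\trace T\} = \{\trace T\}$ immediately. For (i), the hypothesis says some $\Re(e^{i\theta}T)$ has exactly one of its $\pm$ parts trace-class, which forces $\Re\sum \angles{e^{i\theta}Te_j,e_j}$ to diverge in every ONB and hence $R\{\trace T\} = \emptyset$. For (iii), one rotates so that $\Re T$ is trace-class while $\Im T$ has neither $\pm$ part trace-class; then the real-part sum is pinned at $\trace(\Re T)$ in every ONB, while the imaginary-part sum can be steered to any real value by the selfadjoint trichotomy, producing a line.

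The main obstacle is case (iv). Here every real-linear combination of $X$ and $Y$ has both positive and negative parts non-trace-class, and given an arbitrary $z = a + ib \in \mathbb{C}$ I must build an ONB $\{e_j\}$ along which $\sum \angles{Xe_j,e_j} \to a$ \emph{and} $\sum \angles{Ye_j,e_j} \to b$ simultaneously. The delicacy is that $X$ and $Y$ need not commute, so no single spectral decomposition suffices. My plan is to work in small blocks: pick ONB vectors in pairs or slightly larger tuples and exploit the two-dimensional unitary freedom inside each block to perturb its contribution to both $\sum \angles{Xe_j,e_j}$ and $\sum \angles{Ye_j,e_j}$ nearly independently, using that every direction $\cos\theta\,X - \sin\theta\,Y$ has unbounded positive and negative mass to produce blocks whose $X$- and $Y$-contributions lie in any prescribed open cone of $\mathbb{C}$. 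Block-internal swaps, which preserve partial sums, then let one rearrange so that the running complex partial sum approaches $z$ with both coordinate errors decaying together. Once (iv) is executed, the converses in all four cases follow automatically from the mutual exclusivity of the hypotheses.
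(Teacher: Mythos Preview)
This theorem is not proved in the paper. The paper is a survey, and \Cref{thm:fan-fong-herrero-trace-shape} is simply quoted from \cite[Theorem~4]{FFH-1987-PAMS} without any argument; the surrounding text only situates it historically after Fan's \Cref{thm:fan-zero-diagonal}. So there is no ``paper's own proof'' to compare your proposal against.

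That said, your outline follows the natural route one would expect for this result and is broadly sound: the reduction to real and imaginary parts, the rotation trick that two independent trace-class directions force $T$ trace-class, and the selfadjoint trichotomy are all standard. Your identification of case~(iv) as the only genuinely delicate point is correct, and your block-by-block strategy---exploiting that every real-linear combination of $X$ and $Y$ has infinite positive and negative trace to steer the complex partial sums---is the right idea in spirit. One caution: the sketch for (iv) is still quite vague at the crucial step of producing, inside a fixed finite-dimensional block, contributions to $(\sum\langle Xe_j,e_j\rangle,\sum\langle Ye_j,e_j\rangle)$ pointing in a prescribed direction with controlled norm while remaining orthonormal to what came before; making that precise (and ensuring the overshoots vanish) is where the real work lies, and you should consult the original \cite{FFH-1987-PAMS} or \cite{Fan-1984-TAMS} for the details rather than treat it as routine.
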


\bibliographystyle{amsalpha}
\bibliography{references}

\end{document}